\documentclass[a4paper,11pt,fleqn]{article}

\usepackage{graphicx}
\usepackage{relsize}
\usepackage{amsfonts}
\usepackage{verbatim}
\usepackage{xifthen}
\usepackage{xfrac}
\usepackage{xargs}
\usepackage{xcolor}
\usepackage[hidelinks]{hyperref}
\usepackage{physics}
\usepackage{amsmath} 
\usepackage{amsthm}  
\usepackage{amssymb} 
\usepackage{listings} 
\usepackage{tikz}
\usepackage{authblk}
\usepackage{mathtools}
\usepackage[a4paper,top=1in,bottom=1in,left=1in,right=1in]{geometry}
\usepackage{bm}
\usepackage{subcaption}
\usepackage{float}
\usepackage{algorithm}
\usepackage{algpseudocode}

\usepackage{tocloft}
\usepackage{booktabs}
\usepackage{enumitem}

\usetikzlibrary{positioning}
\usetikzlibrary{arrows}
\usetikzlibrary{decorations.pathmorphing}

\renewcommand{\bar}[1]{\mkern 1.5mu\overline{\mkern-1.5mu#1\mkern-1.5mu}\mkern 1.5mu}

\newcommand {\magn}[1]{\left| #1 \right|}
\renewcommand {\vec}[1]{\boldsymbol{#1}}

\newcommand \I{\mathrm{i}}
\newcommandx\D[1][1=]{\mathrm{d}#1}

\newcommandx {\equals}[3][1={=}]{\underbrace{#2}_{#1#3}}

\newcommandx {\CouLombV}[5][3=,5=]{\frac{#1}{\magn{\vec{#2}_{#3}-\vec{#4}_{#5}}}}
\newcommandx {\CouLombF}[4][2=,4=]{\frac{\vec{#1}_{#2}-\vec{#3}_{#4}}{\magn{\vec{#1}_{#2}-\vec{#3}_{#4}}^3}}

\newcommandx {\GFun}[4][2=,4=]{G\left(\vec{#1}_{#2},\vec{#3}_{#4}\right)}
\newcommandx {\der}[2][1=]{\frac{\D[#1] }{\D[#2]}}
\newcommandx {\pder}[2][1=]{\frac{\partial #1}{\partial #2}}
 
\renewcommand{\abs}[1]{\lvert #1 \rvert}

\renewcommand{\phi}{\varphi}

\newcommand{\R}{\mathbb{R}}
\newcommand{\C}{\mathbb{C}}
\newcommand{\Z}{\mathbb{Z}}

\theoremstyle{definition}
\newtheorem{theorem}{Theorem}[section] 
\newtheorem{lemma}[theorem]{Lemma}    

\theoremstyle{definition} 
\newtheorem{definition}[theorem]{Definition}

\theoremstyle{definition} 
\newtheorem{remark}[theorem]{Remark}

\theoremstyle{definition}
\newtheorem{proposition}[theorem]{Proposition}
\newtheorem{corollary}[theorem]{Corollary}

\renewcommand{\norm}[1]{\lVert #1\rVert}
\newcommand{\polar}{\operatorname{polar}}
\newcommand{\Ran}{\operatorname{Range}}
\newcommand{\U}{\mathrm{U}}

\begin{document}

\begin{center}
	\begin{minipage}[t]{6.0in}
Eigensolvers return orthonormal bases of invariant subspaces, but each basis is determined only up to a unitary factor, so bases
computed at different parameter values cannot, in general, be interpolated directly. Discrete parallel transport
resolves this ambiguity by aligning consecutive bases through the polar factors of their overlaps, but approximates the
continuous parallel-transported frame only to second order. We show that the resulting frames can nevertheless be
interpolated with spectral accuracy. For analytic families of invariant subspaces sampled at $N+1$ Chebyshev nodes, the
computed frames are exact samples of an analytic frame related to the parallel-transported frame by a gauge that
remains within $O(N^{-2})$ of the identity throughout a fixed complex neighborhood, uniformly in
$N$. Consequently, Chebyshev interpolation approximates this analytic frame with geometric convergence. Moreover, the
continuous parallel-transported frame can be recovered to spectral accuracy by solving a small matrix differential
equation, without additional eigensolves or derivatives of the underlying matrix family. The results do not require the family to be Hermitian and remain valid through eigenvalue crossings and branch points,
provided the selected invariant subspace depends analytically on the parameter. Numerical examples confirm the predicted rates.

		\thispagestyle{empty}

		\vspace{ -100.0in}

	\end{minipage}
\end{center}

\vspace{ 3.60in}

\begin{center}
	\begin{minipage}[t]{4.4in}
		\begin{center}

\textbf{Second-Order Steps, Spectral Accuracy: Discrete Parallel Transport of Invariant Subspaces}

			\vspace{ 0.50in}

			Hanwen Zhang$\mbox{}^{\dagger}$ \\
			\today

		\end{center}
		\vspace{ -100.0in}
	\end{minipage}
\end{center}

\vspace{ 2.00in}

\vfill
\noindent
$\mbox{}^{\dagger}$ Courant Institute of Mathematical Sciences, New York University, New York, NY 10012

\vfill
\eject

\tableofcontents

\section{Introduction}\label{sec:intro}

The computation of smoothly varying eigenvectors, or more generally of bases of invariant subspaces, is a basic problem in numerical
linear algebra and computational physics. It arises, for example, in the continuation of invariant
subspaces~\cite{DDF00,DF01,BDF08}, analytic singular value decompositions~\cite{BBMN91}, eigenvector
tracking~\cite{DPP13}, diabatic bases in quantum chemistry~\cite{LRS22}, and the construction of Wannier
functions~\cite{MV97,CLPS17,GLS19,Vanderbilt18}. Consider a family of matrices $A(k)\in\C^{m\times m}$, $k\in[-1,1]$, and suppose that a selected $n$-dimensional invariant subspace depends
analytically on $k$. An eigensolver at a parameter value $k_j$ returns an orthonormal basis $Q_j\in\C^{m\times n}$ of
the subspace, but this basis is determined only up to multiplication on the right by a unitary matrix. Consequently,
the bases returned at different parameter values need not vary smoothly, even though the subspaces do, and they cannot
be interpolated directly. The orthogonal projector $P(k)$ onto the subspace is independent of this freedom, but in
many applications $n\ll m$, and the $m\times m$ matrix $P(k)$ is too expensive to store or interpolate. The objective is therefore to construct a
smooth orthonormal frame of the subspaces from the independently computed bases.

A natural choice is the parallel-transported frame $W(k)\in\C^{m\times n}$, the solution of $W'=P'W$ from a given
initial frame (Definition~\ref{def:pt}). It satisfies $W^*W'=0$, so it does not rotate within the subspaces beyond what their motion requires,
and among all frames of the subspaces it has the smallest derivative (Proposition~\ref{prop:spread}). The same frame appears as Kato's adiabatic evolution~\cite{Kato50,Kato}, as the
parallel-transported diabatic basis in quantum chemistry~\cite{LRS22}, and as the minimum-variation basis in the
continuation of invariant subspaces~\cite{BDF08}. It is also the starting point of constructions of Wannier
functions~\cite{CHN16,GZ25}. Its differential equation, however, involves the derivative $P'$,
whereas often only the subspaces at finitely many parameter values are available, through an
eigensolver, and high-order integration costs considerably more than one eigensolve per step~\cite{DPP13}. Moreover, $P'$ involves the inverse of the gap between the eigenvalues of the subspace and
the rest of the spectrum, so its evaluation becomes ill-conditioned when an eigenvalue outside the subspace approaches those of the subspace.
Including its eigenvector in the subspace removes the difficulty, but an integrator that marches in $k$ cannot detect
it in advance.

A widely used alternative is discrete parallel transport (Section~\ref{sec:dt}), which appears in the construction of Wannier
functions~\cite{MV97,Vanderbilt18}, the continuation of invariant subspaces~\cite{BDF08}, and eigenvector
tracking~\cite{DPP13}. Given nodes $-1=k_0<k_1<\dots<k_N=1$ and a frame $V_0$
at $k_0$, it projects the current frame onto the subspace at the next node and replaces the projection $X=P(k_{j+1})V_j$ by the
nearest matrix with orthonormal columns, its polar factor $\polar(X)=X(X^*X)^{-1/2}$:
\begin{equation*}
	V_{j+1}=\polar\bigl(P(k_{j+1})V_j\bigr), \qquad j=0,\dots,N-1 .
\end{equation*}
When the eigensolver supplies a frame $Q_{j+1}$ at $k_{j+1}$, the step needs only the singular value decomposition of
the $n\times n$ overlap $Q_{j+1}^*V_j$, and $P$ is never formed (Algorithm~\ref{alg:dt}). In contrast with a marching
integrator, the eigensolves at the nodes do not depend on the transport, so the eigenvalues at all nodes
are known before the transport starts, and the subspace can be chosen to include the eigenvectors of the eigenvalues that approach it.

Discrete parallel transport approximates the parallel-transported frame with $W(-1)=V_0$ to second order: if $h$ is
the largest node spacing, then
\begin{equation}\label{eq:introsecond}
	\max_{0\le j\le N}\norm{V_j-W(k_j)}=O(h^2)
\end{equation}
(\cite{DPP13,Harland23} and Proposition~\ref{prop:second}). This difference is treated as a discretization error, to be reduced by refining the
nodes, uniformly or adaptively~\cite{CLPS17,GLS19,DF01,DPP13}, or by schemes of higher
order~\cite{BBMN91,DPP13,Harland23}. Related quantities computed from the same overlaps, such as Berry phases and
observables derived from the Berry connection, are also accurate to $O(h^2)$, and their errors have been reduced by
extrapolation~\cite{BV96,WST16} or by higher-order formulas~\cite{Roman06}.

The main observation of this paper is that the standard second-order step already produces data that can be
interpolated with spectral accuracy. At Chebyshev nodes, the natural nodes for polynomial interpolation on an
interval, the largest spacing is $h\le\pi/N$, so the computed frames differ from the parallel-transported frame by
$O(N^{-2})$, yet the Chebyshev
coefficients of their interpolant decay geometrically, far below the $O(N^{-2})$ level, down to rounding errors
(Figure~\ref{fig:ex1}(a) and~(b)). The difference~\eqref{eq:introsecond} is therefore \emph{not} an unstructured discretization error. To the
best of our knowledge, this spectral interpolation phenomenon has not been reported previously. This paper identifies
the structure of the difference, proves the spectral accuracy of the interpolation, and recovers
the parallel-transported frame itself to the same accuracy. The missing ingredient for spectral accuracy is thus not a
higher-order step, but the analytic structure of the accumulated difference.

Our main result, Theorem~\ref{thm:main}, explains this phenomenon. The computed frames are exact samples of an
analytic frame $V$ of the same subspaces, related to the parallel-transported frame $W$ by $V=WU$. The $n\times n$
matrix function $U$, which we call the gauge, is unitary on $[-1,1]$, and at the nodes
$\norm{U(k_j)-I}=\norm{V_j-W(k_j)}$ is the second-order difference in~\eqref{eq:introsecond}. Unlike classical
interpolation of a fixed analytic function, here $V$ changes with $N$: refining the nodes changes the discrete
transport, and hence the analytic frame $V$ sampled by the computed frames. Nevertheless, the theorem shows that for
all sufficiently large $N$, $U-I$ remains $O(N^{-2})$ throughout a fixed complex neighborhood of the interval, not merely at the nodes,
so $V$ stays within $O(N^{-2})$ of the fixed frame $W$ there. In particular, these frames are uniformly bounded on a
common region of analyticity, so Chebyshev interpolation of the computed frames approximates $V$ with geometrically
decaying error, as for a fixed analytic function.

This distinction matters in computation. The interpolant $\mathcal I_NV$ approximates $V$ to spectral accuracy, but in general
$\mathcal I_NV-W=O(N^{-2})$, due to the gauge. When $W$ itself is needed, the gauge can be removed. Although $U$ is
not available from the computed data, since $U_j=W(k_j)^*V_j$ involves the unknown frame $W$, the connection matrix
$V^*V'=U^*U'$ of $V$ (Definition~\ref{def:conn}) can be approximated spectrally from $\mathcal I_NV$ and its derivative. Solving an $n\times n$
linear differential equation with this approximate connection then recovers $W$ to spectral accuracy, using the same
eigensolves and without derivatives of $P$ (Corollary~\ref{cor:recover}).

The results extend in two directions. The proofs rely on operator-norm estimates for the projectors $P(k)$ and
finite-dimensional overlap matrices, so they carry over to analytic families of finite-rank orthogonal projectors on a
Hilbert space, such as spectral projectors of differential operators. Since the constants do not depend explicitly on
the ambient dimension, the bounds also hold uniformly under discretization, provided the projector families have uniform
analytic bounds. For (quasi-)periodic families, equispaced nodes and trigonometric interpolation take the place of
Chebyshev nodes and Chebyshev interpolation. We study this setting in a forthcoming companion paper.

The paper is organized as follows. Section~\ref{sec:notation} contains the notation and definitions used in this paper. Section~\ref{sec:setting} introduces
the parallel-transported frame and discrete parallel transport, and explains how the discrete transport is computed
from the output of an eigensolver. Section~\ref{sec:main} states the main results, which are proved in
Section~\ref{sec:proofs} using the estimates collected in Section~\ref{sec:estimates}. Section~\ref{sec:numerics}
presents numerical examples, and Section~\ref{sec:conclusion} discusses generalizations and applications.

\section{Notation and definitions}\label{sec:notation}

This section collects the notation and definitions used throughout the paper.

For $X\in\C^{m\times n}$, $X^*\in\C^{n\times m}$ is the conjugate transpose. We write $I$ (or $I_n$) for the
$n\times n$ identity matrix and $\U(n)\subset\C^{n\times n}$ for the group of unitary matrices. Vectors $x\in\C^n$ carry
the Euclidean norm $\norm x$. For $X\in\C^{m\times n}$ we use the spectral norm and the Frobenius norm, and for a
function $F\colon K\to\C^{m\times n}$ on a set $K$ the supremum of the spectral norm:
\begin{equation}\label{eq:norms}
	\norm{X}=\max_{x\in\C^n,\ \norm x=1}\norm{Xx}, \qquad \norm{X}_{\mathrm F}=(\operatorname{tr}X^*X)^{1/2}, \qquad
	\norm{F}_K=\sup_{k\in K}\norm{F(k)} .
\end{equation}

\paragraph{Polar factor.} For $X\in\C^{m\times n}$ of rank $n$,
\begin{equation}\label{eq:polar}
	\polar(X):=X\,(X^*X)^{-1/2}\in\C^{m\times n}
\end{equation}
is the factor with orthonormal columns in the polar decomposition $X=\polar(X)\,(X^*X)^{1/2}$ (Theorem~8.1
in~\cite{Higham08}). Here $(X^*X)^{1/2}$ is the unique Hermitian positive definite square root of $X^*X$. Since $(YX)^*(YX)=X^*X$ and $(XY)^*(XY)=Y^*(X^*X)Y$, the definition~\eqref{eq:polar} gives:
\begin{enumerate}[label=(P)]
	\item\label{P} If $Y$ has orthonormal columns, then $\polar(YX)=Y\polar(X)$. If $Y\in\U(n)$, then
	$\polar(XY)=\polar(X)\,Y$.
\end{enumerate}

\paragraph{Chebyshev polynomials and Bernstein ellipses.} The Chebyshev polynomial $T_l$ of degree $l$ satisfies
$T_l(\cos\theta)=\cos l\theta$ for $\theta\in\C$. The Chebyshev points of the second kind are the extreme points of
$T_N$ on $[-1,1]$. In increasing order, they are
\begin{equation}\label{eq:chebnodes}
	k_j=-\cos\frac{\pi j}N, \qquad j=0,\dots,N,
\end{equation}
and by the mean value theorem their largest spacing is at most $\pi/N$. For a function $F$ with values in
$\C^{p\times q}$ that is defined at the nodes, its Chebyshev interpolant $\mathcal I_NF$ is the polynomial in $k$ of
degree at most $N$, with coefficients in $\C^{p\times q}$, such that
\begin{equation}\label{eq:interp}
	(\mathcal I_NF)(k_j)=F(k_j), \qquad j=0,\dots,N.
\end{equation}
For $\rho>1$, $\mathcal E_\rho$ is the open region bounded by the Bernstein ellipse
\begin{equation}\label{eq:ellipse}
	\bigl\{\tfrac12(z+z^{-1}):\ \abs z=\rho\bigr\},
\end{equation}
which contains $[-1,1]$. For $\beta>0$, $\mathcal S_\beta=\{\theta\in\C:\abs{\operatorname{Im}\theta}<\beta\}$ is the
open strip of half-width $\beta$. As $\theta$ runs over $\mathcal S_\beta$, $z=e^{\I\theta}$ runs over the annulus
$e^{-\beta}<\abs z<e^\beta$, which $z\mapsto\frac12(z+z^{-1})$ maps onto $\mathcal E_{e^\beta}$, so
\begin{equation}\label{eq:striptoellipse}
	\cos(\mathcal S_\beta)=\mathcal E_{e^\beta} .
\end{equation}

\paragraph{Analytic matrix families.} A function $X\colon\mathcal D\to\C^{m\times n}$ on an open set
$\mathcal D\subset\C$ is analytic if each of its entries is. A function on an open interval $J\subset\R$ is analytic if
it is the restriction to $J$ of an analytic function on an open set of $\C$ containing $J$. If $\mathcal D$ is
symmetric about the real axis, the reflection of an analytic $X$,
\begin{equation}\label{eq:sharp}
	X^\sharp(z):=X(\bar z)^*, \qquad z\in\mathcal D,
\end{equation}
is analytic, equals $X^*$ on $\mathcal D\cap\R$, and satisfies
\begin{equation}\label{eq:sharpprop}
	X^{\sharp\sharp}=X, \qquad (XY)^\sharp=Y^\sharp X^\sharp .
\end{equation}
It replaces the adjoint, which is not analytic, when functions of $X$ and $X^*$ are continued off the real axis.

\section{The parallel-transported frame and discrete parallel transport}\label{sec:setting}

This section defines the parallel-transported frame of an analytic family of subspaces (Section~\ref{sec:pt}) and
discrete parallel transport, which approximates it from the subspaces at a set of nodes (Section~\ref{sec:dt}).
Section~\ref{sec:compute} describes how the step is computed.

\subsection{Analytic families of subspaces and parallel transport}\label{sec:pt}

Suppose that a family of $n$-dimensional subspaces of $\C^m$ is given by the orthogonal projectors onto them.
Precisely, let $J\subset\R$ be an open interval, and let $P\colon J\to\C^{m\times m}$ be analytic, with
$P(k)$ an orthogonal projector of rank $n$ for every $k$:
\begin{equation}\label{eq:proj}
	P(k)^2=P(k)=P(k)^*, \qquad \dim\Ran P(k)=n, \qquad k\in J.
\end{equation}
The subspaces are then $\Ran P(k)$, $k\in J$.

The most important example in practice is the Hermitian case. Let $H\colon J\to\C^{m\times m}$ be analytic, with
$H(k)$ Hermitian for every $k\in J$. By Rellich's theorem (Section~II.6 in~\cite{Kato}), the eigenvalues and an
orthonormal basis of eigenvectors of $H$ can be followed analytically along $J$, including through crossings of
eigenvalues: there are analytic functions $\lambda_1,\dots,\lambda_m\colon J\to\R$ and
$u_1,\dots,u_m\colon J\to\C^m$ such that $H(k)u_i(k)=\lambda_i(k)u_i(k)$ and $u_1(k),\dots,u_m(k)$ are orthonormal
for every $k\in J$. Hence, for any $n$ distinct indices $i_1,\dots,i_n$, the orthogonal projector onto the span of the corresponding
eigenvectors,
\begin{equation}\label{eq:Pherm}
	P(k)=\sum_{r=1}^n u_{i_r}(k)\,u_{i_r}(k)^* , \qquad k\in J,
\end{equation}
is analytic and satisfies~\eqref{eq:proj}. This includes the spectral projector of a group of $n$ eigenvalues
separated from the rest of the spectrum, and the projector onto eigenvectors whose eigenvalues cross other
eigenvalues.

The setting also covers invariant subspaces of a non-normal family $A\colon J\to\C^{m\times m}$. For a
group of $n$ eigenvalues that stays separated from the rest of the spectrum, the Riesz projector onto the corresponding
invariant subspace is analytic but in general not orthogonal. We take $P(k)$ to be the orthogonal projector onto
the same subspace. This projector is also analytic, since the invariant subspace has an analytic basis
$X\colon J\to\C^{m\times n}$ (Section~II.4.2 in~\cite{Kato}) and $P=X(X^*X)^{-1}X^*$. In both cases, the family
$H$ or $A$ enters the rest of the paper only through $P$.

\begin{definition}[Frames]\label{def:frame}
A \emph{frame at $k\in J$} is a matrix $Q\in\C^{m\times n}$ with $Q^*Q=I_n$ and $\Ran Q=\Ran P(k)$. A \emph{frame
of $P$} is a differentiable function $F\colon J\to\C^{m\times n}$ with $F(k)$ a frame at $k$ for every $k\in J$.
\end{definition}

If $Q$ is a frame at $k$, then $P(k)=QQ^*$, and the frames at $k$ are the matrices $QU$ with $U\in\U(n)$. For $n=1$,
$U$ is the familiar phase $e^{\I\phi}$ of a normalized eigenvector.

\begin{definition}[Parallel-transported frame]\label{def:pt}
Let $k_0\in J$, and let $V_0$ be a frame at $k_0$. The \emph{parallel-transported frame} started from $V_0$ is the
solution $W\colon J\to\C^{m\times n}$ of
\begin{equation}\label{eq:kato}
	W'(k)=P'(k)\,W(k), \quad k\in J, \qquad W(k_0)=V_0,
\end{equation}
where the prime denotes $\D/\D k$.
\end{definition}

For a non-normal family, $W$ is transported with the orthogonal projector, not with the Riesz projector, which would
not keep the frames orthonormal.

As a linear differential equation with an analytic coefficient, \eqref{eq:kato} has a unique solution on all of $J$,
and it is analytic. For every $k\in J$, $W(k)$ is a frame at $k$, and
\begin{equation}\label{eq:katoprop}
	W^*W'=0 ;
\end{equation}
see Remark~II.4.1 and Section~II.6.2 in~\cite{Kato}. Both facts follow from $PP'P=0$, which is the derivative of $P^2=P$ multiplied by $P$ on both sides, together with
the uniqueness of solutions of linear differential equations.
The frame $W$ is called parallel-transported since, by~\eqref{eq:katoprop}, $PW'=WW^*W'=0$: its derivative has no
component in the subspace.

The following lemma shows that the parallel-transported frame continues off the real axis together with $P$. The
continuation follows from Theorem~4.7 in~\cite{Teschl12} on linear differential equations with analytic coefficients. On
$J$, the identities~\eqref{eq:katoext} read $W^*W=I$ and~\eqref{eq:katoprop}, and the identity theorem (Theorem~4.8 in
Chapter~2 of~\cite{SteinShakarchi03}) extends them off the real axis.

\begin{lemma}[Analytic continuation of the parallel-transported frame]\label{lem:continue}
Let $\mathcal D\subset\C$ be a simply connected open set that contains $J$, and suppose that $P$ extends to an analytic
function on $\mathcal D$. Then so does $W$, and $W'=P'W$ on $\mathcal D$. If $\mathcal D$ is also symmetric about the
real axis, then
\begin{equation}\label{eq:katoext}
	W^\sharp W=I, \qquad W^\sharp W'=0 \qquad\text{on }\mathcal D.
\end{equation}
\end{lemma}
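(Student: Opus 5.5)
The plan has three steps: continue $W$ by the theory of linear ODEs with analytic coefficients, extend the differential equation by the identity theorem, and then prove~\eqref{eq:katoext} by reflecting and using uniqueness of analytic continuation.

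\emph{Existence of the continuation.} Since $P$ is analytic on $\mathcal D$, so is the coefficient $P'$. The equation $W'=P'W$ is then a linear system with analytic coefficient on the simply connected domain $\mathcal D$. We apply Theorem~4.7 in~\cite{Teschl12} with the initial value $W(k_0)=V_0$ at $k_0\in J\subset\mathcal D$. It yields a unique analytic solution $\tilde W$ on all of $\mathcal D$; simple connectivity is what rules out monodromy. On $J$, $\tilde W$ solves the real equation~\eqref{eq:kato} with the same initial value. By uniqueness for real linear ODEs, $\tilde W=W$ on $J$, so $\tilde W$ is the analytic continuation of $W$.

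\emph{The identities~\eqref{eq:katoext}.} Assume now that $\mathcal D$ is also symmetric about the real axis. By~\eqref{eq:sharp}, $W^\sharp$ is analytic on $\mathcal D$. Hence $F:=W^\sharp W-I$ and $G:=W^\sharp W'$ are analytic on $\mathcal D$. On $J$, $W^\sharp=W^*$, so $F=W^*W-I=0$ because $W(k)$ is a frame. Also $G=W^*W'=0$ by~\eqref{eq:katoprop}.

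A simply connected open set is connected, and $J$ has accumulation points in $\mathcal D$. The identity theorem (Theorem~4.8 in Chapter~2 of~\cite{SteinShakarchi03}) therefore gives $F\equiv0$ and $G\equiv0$ on $\mathcal D$. Likewise $W'-P'W$ is analytic on $\mathcal D$ and vanishes on $J$, so $W'=P'W$ throughout $\mathcal D$; this also follows directly from the construction.

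\emph{Main obstacle.} The only delicate point is the global continuation along all of $\mathcal D$, since local power-series solutions must patch together consistently. This is exactly what simple connectivity and the cited Theorem~4.7 provide. The remaining steps are formal consequences of the identity theorem, using that $X\mapsto X^\sharp$ preserves analyticity.
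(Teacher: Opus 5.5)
Your proposal is correct and follows the paper's argument. The continuation of $W$ to $\mathcal D$ comes from the theorem on linear differential equations with analytic coefficients on a simply connected domain (Theorem~4.7 in~\cite{Teschl12}). On $J$ the identities $W^\sharp W=I$ and $W^\sharp W'=0$ reduce to $W^*W=I$ and~\eqref{eq:katoprop}, and the identity theorem extends them to the connected set $\mathcal D$. Your extra details, matching the continuation with $W$ on $J$ by uniqueness and extending $W'=P'W$ by the identity theorem, only spell out what the paper leaves implicit.
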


\begin{remark}[Reparametrization]\label{rmk:param}
The parallel-transported frame does not depend on the parametrization: if $\tilde J\subset\R$ is an open interval,
$\varphi\colon\tilde J\to J$ is analytic and $\varphi(s_0)=k_0$, then
$(W\circ\varphi)'=\varphi'\,(P'W)\circ\varphi=(P\circ\varphi)'\,(W\circ\varphi)$, so $W\circ\varphi$ is the
parallel-transported frame of $P\circ\varphi$ started from $V_0$ at $s_0$.
\end{remark}

\begin{definition}[Connection matrix]\label{def:conn}
The \emph{connection matrix} of a frame $F$ of $P$ is $F^*F'\colon J\to\C^{n\times n}$.
\end{definition}

The connection matrix is skew-Hermitian, since differentiating $F^*F=I_n$ gives $F^*F'+(F^*F')^*=0$. Every frame of $P$ is $F=WU$ with
$U:=W^*F\colon J\to\U(n)$, since $F=PF=WW^*F$. We call $U$ the \emph{gauge} of $F$ relative to $W$. The term comes from
physics, where a change of frame is a gauge transformation~\cite{Frankel11,MMYSV12}. By~\eqref{eq:katoprop}, the connection
matrix of $F$ is
\begin{equation}\label{eq:conn}
	F^*F'=U^*W^*(W'U+WU')=U^*U' .
\end{equation}
Hence a frame of $P$ has connection matrix zero exactly when it solves the parallel-transport equation~\eqref{eq:kato}:
since $F'=W'U+WU'=P'F+WU'$,
\begin{equation}\label{eq:zeroconn}
	F^*F'=0 \quad\Longleftrightarrow\quad F'=P'F .
\end{equation}
Hence the parallel-transported frame is the frame of $P$ with connection matrix zero that starts from $V_0$, and the
other frames of $P$ differ from it by their gauge. The matrix $F^*F'$ is referred to as the connection matrix since it represents in the
frame $F$ the connection that $\C^m$ induces on the family of subspaces~\cite{Harland23}. In physics, the Hermitian
matrix $\I F^*F'$ is called the Berry connection (see~\cite{MV97} and Sections~3.1.2 and~3.6.4
in~\cite{Vanderbilt18}).

From a computational point of view, the parallel-transported frame has the advantage that it varies as slowly as
the subspace allows: among the frames of $P$, it has the smallest derivative and the smallest second moment of
its Chebyshev coefficients, as the following proposition shows. The splitting~\eqref{eq:pythag} is in the proof of Lemma~2.1
in~\cite{BDF08}, and the second moment in~\eqref{eq:moment} is the analogue of the term $\sum_n\langle r^2\rangle_n$ in the spread
of Wannier functions (Section~III in~\cite{MV97}).

\begin{proposition}[Concentration of Chebyshev coefficients]\label{prop:spread}
Suppose that $J$ contains $[-1,1]$, and let $F$ be an analytic frame of $P$, with the Chebyshev series
$F(k)=\sum_{l\ge0}C_lT_l(k)$ on $[-1,1]$. Then
\begin{gather}
	\norm{F'(k)}_{\mathrm F}^2=\norm{W'(k)}_{\mathrm F}^2+\norm{F(k)^*F'(k)}_{\mathrm F}^2, \qquad k\in J, \label{eq:pythag}\\
	\frac\pi2\sum_{l\ge1}l^2\norm{C_l}_{\mathrm F}^2=\int_{-1}^1\norm{F'(k)}_{\mathrm F}^2\sqrt{1-k^2}\,\D k . \label{eq:moment}
\end{gather}
In particular, among the analytic frames of $P$, the second moment $\sum_{l\ge1}l^2\norm{C_l}_{\mathrm F}^2$ is smallest
exactly for the parallel-transported frames.
\end{proposition}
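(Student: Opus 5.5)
The identity \eqref{eq:pythag} comes from writing an arbitrary analytic frame in terms of its gauge, $F=WU$ with $U=W^*F$ unitary on $J$. Then $F'=W'U+WU'$. The two terms are orthogonal in the Frobenius inner product: $\operatorname{tr}\bigl((W'U)^*WU'\bigr)=\operatorname{tr}(U^*W'^*WU')$, and this vanishes because $W'^*W=(W^*W')^*=0$ by \eqref{eq:katoprop}. Pythagoras therefore gives $\norm{F'}_{\mathrm F}^2=\norm{W'U}_{\mathrm F}^2+\norm{WU'}_{\mathrm F}^2$. Since $U$ is unitary and $W$ has orthonormal columns, this equals $\norm{W'}_{\mathrm F}^2+\norm{U'}_{\mathrm F}^2=\norm{W'}_{\mathrm F}^2+\norm{U^*U'}_{\mathrm F}^2$. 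By \eqref{eq:conn}, $U^*U'=F^*F'$, which proves \eqref{eq:pythag}.

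For \eqref{eq:moment} I substitute $k=\cos\theta$ with $\theta\in[0,\pi]$, so that $F(\cos\theta)=\sum_l C_l\cos l\theta$. Analyticity of $F$ on a neighborhood of $[-1,1]$ makes the $C_l$ decay geometrically, so the series may be differentiated termwise. This gives $\frac{\D}{\D\theta}F(\cos\theta)=-\sum_{l\ge1}lC_l\sin l\theta$. By the chain rule the left side equals $-\sin\theta\,F'(\cos\theta)$. Parseval on $[0,\pi]$, using $\int_0^\pi\sin l\theta\sin l'\theta\,\D\theta=\frac\pi2\delta_{ll'}$ and the Frobenius inner product entrywise, yields
\begin{equation*}
\frac\pi2\sum_{l\ge1}l^2\norm{C_l}_{\mathrm F}^2=\int_0^\pi\sin^2\theta\,\norm{F'(\cos\theta)}_{\mathrm F}^2\,\D\theta .
\end{equation*}
Changing variables back with $\D k=-\sin\theta\,\D\theta$ and $\sin\theta=\sqrt{1-k^2}$ gives the weight $\sqrt{1-k^2}$ in \eqref{eq:moment}.

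For the final claim, I combine \eqref{eq:pythag} and \eqref{eq:moment}. Here $W$ denotes the parallel-transported frame from any starting frame. The norm $\norm{W'}_{\mathrm F}$ does not depend on that choice: frames started from $V_0U_0$ equal $WU_0$ with $U_0$ a constant unitary, by uniqueness of solutions of \eqref{eq:kato}. Hence the second moment of $F$ equals a fixed quantity plus $\int_{-1}^1\norm{F^*F'}_{\mathrm F}^2\sqrt{1-k^2}\,\D k\ge0$. The minimum is therefore attained exactly when $F^*F'$ vanishes on $(-1,1)$. The weight is positive there and the integrand is continuous, so this forces $F^*F'=0$ on $(-1,1)$, and then on all of $J$ by analyticity. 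By \eqref{eq:zeroconn} this happens exactly when $F'=P'F$, that is, when $F$ is a parallel-transported frame.

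The only delicate point is justifying the termwise differentiation and Parseval step, and analyticity handles it routinely.
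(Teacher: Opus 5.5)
Your proof is correct and follows the paper's argument. You split $F'=W'U+WU'$ into Frobenius-orthogonal pieces (you check orthogonality via $W^*W'=0$, the paper via $PP'P=0$, which is the same fact), apply Parseval to $\frac{\D}{\D\theta}F(\cos\theta)$ on $[0,\pi]$, and finish with the identity theorem and \eqref{eq:zeroconn}; your added remark that $\norm{W'}_{\mathrm F}$ does not depend on the starting frame is a harmless clarification of why every parallel-transported frame attains the minimum.
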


\begin{proof}
We first prove~\eqref{eq:pythag}. Let $U$ be the gauge of $F$, so $F=WU$. The two terms of $F'=P'F+WU'$ are
orthogonal, since $PP'F=PP'PF=0$ and $WU'=PWU'$. Since $\norm{P'F}_{\mathrm F}=\norm{W'U}_{\mathrm F}=\norm{W'}_{\mathrm F}$ and
$\norm{WU'}_{\mathrm F}=\norm{U'}_{\mathrm F}=\norm{U^*U'}_{\mathrm F}=\norm{F^*F'}_{\mathrm F}$ by~\eqref{eq:conn}, this gives~\eqref{eq:pythag}. Next,
\eqref{eq:moment} is Parseval's identity for the derivative of $F(\cos\theta)=\sum_{l\ge0}C_l\cos l\theta$ on
$[0,\pi]$, written in the variable $k=\cos\theta$. The last statement follows from~\eqref{eq:pythag}
and~\eqref{eq:moment}: equality forces $F^*F'=0$ on $[-1,1]$, hence on $J$ by the identity theorem,
and~\eqref{eq:zeroconn} shows that $F^*F'=0$ exactly for the parallel-transported frames.
\end{proof}

\subsection{Discrete parallel transport}\label{sec:dt}

The discrete parallel transport approximates the parallel-transported frame using
only the values of $P$ at a set of nodes and not its derivative. Each step projects the current frame onto the subspace
at the next node and restores orthonormal columns.

\begin{definition}[Discrete parallel transport]\label{def:dt}
Let $k_0,k_1,\dots,k_N\in J$ be nodes, with $k_0$ and $V_0$ as in Definition~\ref{def:pt}. The \emph{discrete
parallel transport} defines matrices $V_1,\dots,V_N$ by
\begin{equation}\label{eq:dt}
	V_{j+1}:=\polar\bigl(P(k_{j+1})\,V_j\bigr), \qquad j=0,\dots,N-1,
\end{equation}
as long as $P(k_{j+1})V_j$ has rank $n$.
\end{definition}

The computed frames $V_j$ approximate the parallel-transported frame at the nodes,
\begin{equation}\label{eq:Wj}
	W_j=W(k_j), \qquad j=0,\dots,N.
\end{equation}
The function $W$ itself is not computed.

When a frame at the next node is available, the step can be computed from an $n\times n$ overlap. If $Q_{j+1}$ is a
frame at $k_{j+1}$, then $P(k_{j+1})V_j=Q_{j+1}(Q_{j+1}^*V_j)$, so the step is defined exactly when the $n\times n$
overlap $Q_{j+1}^*V_j$ is invertible. In that case,~\ref{P} gives
\begin{equation}\label{eq:svd}
	V_{j+1}=Q_{j+1}\polar(Q_{j+1}^*V_j)=Q_{j+1}XY^*, \qquad Q_{j+1}^*V_j=X\Sigma Y^* \ \text{(SVD)},
\end{equation}
where the second equality is~\eqref{eq:polar} with $(X\Sigma Y^*)^*X\Sigma Y^*=Y\Sigma^2Y^*$. The result does not depend
on the choice of $Q_{j+1}$, since $V_{j+1}$ is determined by $P(k_{j+1})$ and $V_j$.

Both forms are widely used. For Wannier functions, the projector form~\eqref{eq:dt} is used
in~\cite{CLPS17,GLS19}, and the SVD form~\eqref{eq:svd} is the parallel transport of Marzari and Vanderbilt
(Section~IV.C.1 in~\cite{MV97} and Section~3.6.2 in~\cite{Vanderbilt18}). For analytic singular value decompositions, the SVD form is a Procrustes step~\cite{BBMN91}. In the continuation of invariant subspaces, the step gives the orthonormal
basis at the next parameter value nearest to the current one, computed from a Riccati equation in~\cite{DDF00,DF01}
and by an SVD in~\cite{BDF08}. Applied column by column, it is the corrector used to track eigenvectors of
Hermitian families in~\cite{DPP13}.

\begin{remark}[A fixed reference]\label{rmk:fixedref}
Replacing $V_j$ by $V_0$ in~\eqref{eq:dt} aligns every basis with one fixed frame,
$\polar\bigl(P(k_j)V_0\bigr)=P(k_j)V_0\bigl(V_0^*P(k_j)V_0\bigr)^{-1/2}$, a construction used
in~\cite{Rheinboldt88,MR93,AF11,LRS22}. It is defined only while $V_0^*P(k)V_0$ is invertible, that is, while no
nonzero vector of $\Ran V_0$ is orthogonal to $\Ran P(k)$. Along a longer path the subspace can move far from
$\Ran V_0$, and beyond such a point the aligned bases no longer vary continuously (Example~1 in~\cite{BBMN91}).
The discrete parallel transport~\eqref{eq:dt} aligns with the current frame $V_j$ instead, so it needs only that
consecutive subspaces are close (Lemma~\ref{lem:dt}).
\end{remark}

The following lemma shows that the step~\eqref{eq:dt} is defined as long as consecutive projectors differ by less
than $1$ in norm, and that it stays in the exact subspace. The step is the unitary of Sz.-Nagy and Kato that maps
$\Ran P(k_j)$ onto $\Ran P(k_{j+1})$, applied to $V_j$ (Section~I.4.6 and Theorem~I.6.32 in~\cite{Kato}).
Lemmas~\ref{lem:dt} and~\ref{lem:rev} are classical facts about this unitary, stated for frames.

\begin{lemma}\label{lem:dt}
Suppose that $\norm{P(k_{j+1})-P(k_j)}<1$ for $0\le j<N$. Then:
\begin{itemize}
	\item[(i)] for $0\le j<N$, the matrix $G_j:=V_j^*P(k_{j+1})V_j$ is positive definite, so the step~\eqref{eq:dt} is
	defined, and
	\begin{equation}\label{eq:dtS}
		V_{j+1}=P(k_{j+1})\,V_j\,G_j^{-1/2} ;
	\end{equation}
	\item[(ii)] every $V_j$, $0\le j\le N$, is a frame at $k_j$.
\end{itemize}
\end{lemma}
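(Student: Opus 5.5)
The proof is an induction on $j$, carrying the statement ``$V_j$ is a frame at $k_j$'' from $j$ to $j+1$. For $j=0$ this holds by assumption, since $V_0$ is a frame at $k_0$ (Definition~\ref{def:pt}). Suppose $V_j$ is a frame at $k_j$, and write $P_j=P(k_j)$. Then $V_j^*V_j=I$ and $P_jV_j=V_j$. Hence
\begin{equation*}
	I-G_j=V_j^*(I-P_{j+1})V_j=V_j^*P_j(P_j-P_{j+1})P_jV_j .
\end{equation*}
To justify the last equality, note that $P_j(I-P_{j+1})P_j=P_j(P_j-P_{j+1})P_j$ because $P_j^2=P_j$. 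It follows that $\norm{I-G_j}\le\norm{P_j-P_{j+1}}<1$. The matrix $G_j$ is Hermitian and its eigenvalues lie in $(0,2)$, so $G_j$ is positive definite.

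Next, $X:=P_{j+1}V_j$ satisfies
\begin{equation*}
	X^*X=V_j^*P_{j+1}^2V_j=V_j^*P_{j+1}V_j=G_j
\end{equation*}
by~\eqref{eq:proj}. Since $G_j$ is invertible, $X$ has rank $n$. The definition~\eqref{eq:polar} then gives $\polar(X)=XG_j^{-1/2}$, which is exactly~\eqref{eq:dtS}. This proves (i) at step $j$.

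For (ii) at step $j+1$, orthonormality follows from
\begin{equation*}
	V_{j+1}^*V_{j+1}=G_j^{-1/2}X^*XG_j^{-1/2}=G_j^{-1/2}G_jG_j^{-1/2}=I .
\end{equation*}
The range of $V_{j+1}$ lies in $\Ran P_{j+1}$, because $V_{j+1}=P_{j+1}(\cdot)$. The matrix $V_{j+1}$ has $n$ orthonormal columns, so its range is $n$-dimensional. Since $\dim\Ran P_{j+1}=n$, the two ranges are equal, and $V_{j+1}$ is a frame at $k_{j+1}$.

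This closes the induction. Both parts follow for all indices: (i) for $0\le j<N$ and (ii) for $0\le j\le N$.

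The only step with any content is the bound $\norm{I-G_j}<1$. It uses the inductive hypothesis $P_jV_j=V_j$, which is why (i) and (ii) have to be proved together rather than separately.
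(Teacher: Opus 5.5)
Your proof is correct and follows essentially the same route as the paper: a joint induction on $j$ in which $P(k_j)V_j=V_j$ turns the hypothesis $\norm{P(k_{j+1})-P(k_j)}<1$ into positive definiteness of $G_j$, followed by the polar-factor formula and the dimension count for the range. The only difference is cosmetic: you bound $\norm{I-G_j}\le\norm{P(k_j)-P(k_{j+1})}$ directly, whereas the paper bounds $\norm{P(k_{j+1})V_jx}$ from below by $\bigl(1-\norm{P(k_{j+1})-P(k_j)}\bigr)\norm x$; both give $G_j>0$.
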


\begin{proof}
We argue by induction on $j$. Assume that $V_j$ is a frame at $k_j$, which holds for $j=0$. Since $P(k_{j+1})$ is an
orthogonal projector, $G_j=\bigl(P(k_{j+1})V_j\bigr)^*P(k_{j+1})V_j$. Since $P(k_j)V_j=V_j$ and $\norm{V_jx}=\norm x$,
for $x\in\C^n$ with $x\ne0$,
\begin{equation}
	\norm{P(k_{j+1})V_jx}=\bigl\|V_jx-\bigl(P(k_j)-P(k_{j+1})\bigr)V_jx\bigr\|
	\ \ge\ \bigl(1-\norm{P(k_{j+1})-P(k_j)}\bigr)\norm x\ >\ 0 .
\end{equation}
Hence $P(k_{j+1})V_j$ has rank $n$, $G_j$ is positive definite, and the definition~\eqref{eq:polar} of the polar factor
gives~\eqref{eq:dtS}.

It remains to show that $V_{j+1}$ is a frame at $k_{j+1}$. By~\eqref{eq:dtS},
$V_{j+1}^*V_{j+1}=G_j^{-1/2}G_jG_j^{-1/2}=I_n$. Since $G_j^{-1/2}$ is invertible, \eqref{eq:dtS} also gives
$\Ran V_{j+1}=\Ran P(k_{j+1})V_j\subset\Ran P(k_{j+1})$, and both spaces have dimension $n$, so they are equal.
\end{proof}

The next lemma shows that the step can be undone exactly; the proof of Theorem~\ref{thm:main} uses this to extend the
computed frames evenly around a circle (Lemma~\ref{lem:retrace}).

\begin{lemma}[Reversibility]\label{lem:rev}
Under the assumptions of Lemma~\ref{lem:dt}, for $0\le j<N$,
\begin{equation}\label{eq:rev}
	V_j=\polar\bigl(P(k_j)\,V_{j+1}\bigr).
\end{equation}
\end{lemma}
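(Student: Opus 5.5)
We prove the identity directly from the explicit form~\eqref{eq:dtS} of the step given by Lemma~\ref{lem:dt}. Write $P_j=P(k_j)$ and $G_j=V_j^*P_{j+1}V_j$, which is positive definite by Lemma~\ref{lem:dt}(i). By Lemma~\ref{lem:dt}(ii), $V_j$ is a frame at $k_j$ and $V_{j+1}$ a frame at $k_{j+1}$, so $P_jV_j=V_j$ and $P_{j+1}V_{j+1}=V_{j+1}$. The plan is to compute $P_jV_{j+1}$, factor it as $V_j$ times a Hermitian positive definite matrix, and then use the uniqueness of the polar decomposition (equivalently, property~\ref{P}).

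The key computation is the following. By~\eqref{eq:dtS},
\begin{equation*}
	P_jV_{j+1}=P_jP_{j+1}V_jG_j^{-1/2}.
\end{equation*}
We insert $P_jV_j=V_j$ on the right of $P_{j+1}$ and use $P_j=V_jV_j^*$, which holds since $V_j$ is a frame at $k_j$. This gives
\begin{equation*}
	P_jP_{j+1}V_j=V_jV_j^*P_{j+1}V_j=V_jG_j,
\end{equation*}
so that
\begin{equation*}
	P_jV_{j+1}=V_jG_jG_j^{-1/2}=V_j\,G_j^{1/2}.
\end{equation*}
Since $G_j^{1/2}$ is invertible, $P_jV_{j+1}$ has rank $n$, and its polar factor is defined.

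To conclude, we apply~\ref{P} with $Y=V_j$, which has orthonormal columns:
\begin{equation*}
	\polar\bigl(P_jV_{j+1}\bigr)=V_j\,\polar\bigl(G_j^{1/2}\bigr).
\end{equation*}
Directly from~\eqref{eq:polar}, with $X=G_j^{1/2}$ Hermitian positive definite, we have $X^*X=G_j$, so $\polar(X)=G_j^{1/2}G_j^{-1/2}=I$. This gives~\eqref{eq:rev}.

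None of the steps is a serious obstacle. The only point needing care is to use the frame identities $P_jV_j=V_j$ and $V_jV_j^*=P_j$, which come from Lemma~\ref{lem:dt}(ii), rather than any hypothesis about $V_{j+1}$, and to check that $P_jV_{j+1}$ has full rank before taking its polar factor. The factorization $P_jV_{j+1}=V_jG_j^{1/2}$ supplies that rank condition directly. Note that the hypothesis $\norm{P(k_{j+1})-P(k_j)}<1$ is symmetric in $j$ and $j+1$, so it also guarantees rank $n$ in the reverse direction. This gives an alternative check, but the explicit factorization already suffices.
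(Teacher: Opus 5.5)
Your proposal is correct and is essentially the paper's proof: both compute $P(k_j)V_{j+1}=V_j(V_j^*P(k_{j+1})V_j)G_j^{-1/2}=V_jG_j^{1/2}$ using $P(k_j)=V_jV_j^*$ and~\eqref{eq:dtS}, then apply~\ref{P} together with $\polar(G_j^{1/2})=I_n$. Your explicit rank check and the step inserting $P_jV_j=V_j$ are harmless additions; the second is not needed, since $P_j=V_jV_j^*$ already gives $P_jP_{j+1}V_j=V_jG_j$.
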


\begin{proof}
By Lemma~\ref{lem:dt}(ii), $V_j$ is a frame at $k_j$, so $P(k_j)=V_jV_j^*$, and by~\eqref{eq:dtS}
\begin{equation}
	P(k_j)V_{j+1}=V_j\,\bigl(V_j^*P(k_{j+1})V_j\bigr)\,G_j^{-1/2}=V_j\,G_j\,G_j^{-1/2}=V_j\,G_j^{1/2}.
\end{equation}
By~\ref{P}, $\polar(V_jG_j^{1/2})=V_j\polar(G_j^{1/2})$, and $\polar(G_j^{1/2})=G_j^{1/2}G_j^{-1/2}=I_n$
by~\eqref{eq:polar}, since $G_j^{1/2}$ is Hermitian and positive definite. Hence $\polar\bigl(P(k_j)V_{j+1}\bigr)=V_j$, which is~\eqref{eq:rev}.
\end{proof}

The following proposition shows that the error $V_j-W_j$ is $O(h^2)$ on any nodes, where $h$ is the largest
spacing. We state it on $[-1,1]$, the interval of the main
results; an affine change of variable gives any compact interval (Remark~\ref{rmk:param}). For $n=1$ and $P(k)$ the
projector onto an
eigenvector of a Hermitian family with distinct eigenvalues, the bound at $k_N$ is Theorem~2.7 in~\cite{DPP13}. For
general $n$, the proposition is implicit in Harland's analysis of a closely related second-order scheme
(equation~(2.23) in~\cite{Harland23}). We give a self-contained proof in Section~\ref{sec:onestep}, since its one-step
factor is also the starting point of the proof of Theorem~\ref{thm:main}.

\begin{proposition}[Second-order convergence]\label{prop:second}
Let $P\colon J\to\C^{m\times m}$ be analytic on an open interval $J$ that contains $[-1,1]$, with $P(k)$ an orthogonal
projector of rank $n$ for every $k\in J$. Let $V_0$ be a frame at $k=-1$, and let $W\colon J\to\C^{m\times n}$ be the
parallel-transported frame started from $V_0$. There are $C$ and $h_0>0$, depending only on $P$, such that for all
nodes $-1=k_0<k_1<\dots<k_N=1$ with $h:=\max_{0\le j<N}(k_{j+1}-k_j)\le h_0$, the discrete parallel
transport started from $V_0$ is defined, and
\begin{equation}\label{eq:secondorder}
	\max_{0\le j\le N}\norm{V_j-W_j}\le Ch^2.
\end{equation}
\end{proposition}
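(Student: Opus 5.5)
We will compare the discrete step with the exact transport over one interval in the gauge of the exact frame. Write $V_j=W_jU_j$ with $U_j:=W_j^*V_j\in\U(n)$; this is possible since $V_j$ is a frame at $k_j$ by Lemma~\ref{lem:dt}(ii). Then $\norm{V_j-W_j}=\norm{U_j-I}$. Because $P(k_{j+1})W_j=W_{j+1}W_{j+1}^*W_j$ and $W_{j+1}^*W_j$ is $n\times n$, property~\ref{P} gives
\begin{equation*}
	V_{j+1}=\polar\bigl(P(k_{j+1})W_jU_j\bigr)=W_{j+1}\polar\bigl(W_{j+1}^*W_j\bigr)U_j .
\end{equation*}
Thus $U_{j+1}=M_jU_j$ with the one-step factor $M_j:=\polar(W_{j+1}^*W_j)\in\U(n)$, and so $U_N=M_{N-1}\cdots M_0$. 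The definedness part comes first. Since $P$ is analytic on $J\supset[-1,1]$, $\norm{P'}$ is bounded on $[-1,1]$, so $\norm{P(k_{j+1})-P(k_j)}\le h\norm{P'}_{[-1,1]}<1$ once $h\le h_0$. Lemma~\ref{lem:dt} then applies.

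The key step is to show that $\norm{M_j-I}\le C\delta_j^3$, where $\delta_j=k_{j+1}-k_j$. Taylor-expand $W$ about $k_j$ and write $S:=W_{j+1}^*W_j=W(k_j+\delta)^*W(k_j)$. By~\eqref{eq:katoprop}, $W_j^*W'_j=0$. Differentiating $W^*W'=0$ gives $W'^*W'+W^*W''=0$, so $W_j^*W_j''=-W_j'^*W_j'$, which is Hermitian. Hence
\begin{equation*}
	S=I+\tfrac{\delta^2}{2}\,(W_j''^*W_j)+O(\delta^3)=I-\tfrac{\delta^2}{2}\,W_j'^*W_j'+O(\delta^3).
\end{equation*}
So $S=I+H+E$ with $H$ Hermitian, $\norm H=O(\delta^2)$, and $\norm E=O(\delta^3)$, where the constants come from bounds on $W''$ and $W'''$ on $[-1,1]$. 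The polar factor of a Hermitian positive definite matrix is $I$, and $\polar$ is Lipschitz with a bounded constant near $I$. One can see this from $\polar(X)=X(X^*X)^{-1/2}$ together with smoothness of the inverse square root near $I$. Therefore $\norm{M_j-I}=\norm{\polar(I+H+E)-\polar(I+H)}\le C\delta_j^3$ for small $h$.

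Finally, accumulate. The matrices $M_j$ are unitary, so $\norm{M_{N-1}\cdots M_0-I}\le\sum_j\norm{M_j-I}$ by telescoping, and the same holds for each partial product giving $U_j$. This yields
\begin{equation*}
	\norm{U_j-I}\le C\sum_i\delta_i^3\le Ch^2\sum_i\delta_i=2Ch^2 .
\end{equation*}
The main obstacle is the one-step bound, specifically that the second-order term of $S$ is Hermitian. This uses $W^*W'=0$ together with its derivative, and it is what cancels the $O(\delta^2)$ term in the polar factor. The Lipschitz bound for $\polar$ near $I$ needs only the uniform bounds on $W$ and its derivatives on $[-1,1]$, which depend only on $P$.
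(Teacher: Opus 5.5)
Your proof is correct and follows the paper's argument. It uses the same factorization $U_{j+1}=M_jU_j$ with $M_j=\polar(W_{j+1}^*W_j)$ (Lemma~\ref{lem:factor}), the same one-step bound $\norm{M_j-I}=O(\delta_j^3)$ driven by the Hermitian second-order term $-\frac{\delta^2}{2}W_j'^*W_j'$, and the same telescoping estimate for products of unitaries. The only real difference is in the one-step bound: you combine $\polar(I+H)=I$ with a Lipschitz estimate for $\polar$ near $I$, whereas the paper linearizes $\polar(I+\Delta)=I+\frac12(\Delta-\Delta^*)+O(\norm\Delta^2)$ and expands to third order, which also yields the leading term $\frac{\delta^3}{12}W^*[P',P'']W$ of Lemma~\ref{lem:step}.
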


The bound~\eqref{eq:secondorder} measures only an error in the basis. Since $V_j$ and $W_j$ are both frames at $k_j$
(Lemma~\ref{lem:dt}(ii)), the subspace $\Ran P(k_j)$ is computed exactly, and $V_j$ is $W_j$ rotated within it.

Geometrically, since $\norm{P(k_{j+1})-P(k_j)}<1$, each step is exact parallel transport along the unique minimizing
Grassmannian geodesic between $\Ran P(k_j)$ and $\Ran P(k_{j+1})$: $V_{j+1}$ is the endpoint of the
parallel-transported frame along that geodesic started from $V_j$ (Theorem~5.4 in~\cite{BZA24} for real matrices; the complex case is
analogous, Section~1 in~\cite{BZA24}). In particular, if the subspaces move along such a geodesic, the discrete
parallel transport is exact: $V_j=W_j$ at every node, for every $N$.

\subsection{Computing the step}\label{sec:compute}

The projector form~\eqref{eq:dt} and the SVD form~\eqref{eq:svd} are equivalent mathematically. Since
$P(k_{j+1})=Q_{j+1}Q_{j+1}^*$, both need only the $n\times n$ overlap $Q_{j+1}^*V_j$ and products with $Q_{j+1}$, so
each step costs $O(mn^2+n^3)$ operations, and the $m\times m$ projector is never formed. In floating-point arithmetic,
however, the projector form computes $G_j=(Q_{j+1}^*V_j)^*(Q_{j+1}^*V_j)$, whose condition number is
$\kappa(Q_{j+1}^*V_j)^2$, where $\kappa(A)=\norm A\norm{A^{-1}}$. The SVD form computes the polar factor of the overlap
directly, without forming $G_j$, and as accurately as can be expected in floating-point arithmetic (Section~8.8
in~\cite{Higham08}). Since the overlap is only $n\times n$, computing its polar factor through the SVD is the method of
choice (p.~196 in~\cite{Higham08}).

The condition number of the overlap is set by the distance between consecutive subspaces. Since the singular values
of $Q_{j+1}^*V_j$ are the cosines of the principal angles between $\Ran P(k_j)$ and $\Ran P(k_{j+1})$, and
$\norm{P(k_{j+1})-P(k_j)}$ is the sine of the largest one (Theorem~1 in~\cite{BG73} and Section~4.3
in~\cite{EAS98}),
\begin{equation}\label{eq:kappa}
	\kappa(Q_{j+1}^*V_j)\le\frac1{\sigma_j}=\bigl(1-\norm{P(k_{j+1})-P(k_j)}^2\bigr)^{-1/2}, \qquad
	\sigma_j:=\sigma_{\min}(Q_{j+1}^*V_j).
\end{equation}
Since $\norm{P(k_{j+1})-P(k_j)}=O(h)$, the bound is $1+O(h^2)$. The numbers $\sigma_j$ also check the step size at no extra cost: the step is defined exactly when $\sigma_j>0$, which
by~\eqref{eq:kappa} is the hypothesis $\norm{P(k_{j+1})-P(k_j)}<1$ of Lemma~\ref{lem:dt}.

For these reasons, the numerical examples use the SVD form, as in Algorithm~\ref{alg:dt}. The proofs use the projector
form~\eqref{eq:dt}, and the SVD form only with the parallel-transported frame $Q_{j+1}=W_{j+1}$
(Lemma~\ref{lem:factor}).

\begin{algorithm}[H]
\caption{Discrete parallel transport in the SVD form}\label{alg:dt}
\begin{algorithmic}[1]
\Require nodes $k_0,\dots,k_N\in J$; a frame $Q_j$ at $k_j$ for $j=0,\dots,N$, in any gauge; a frame $V_0$ at $k_0$;
a tolerance $\sigma_{\rm tol}\in(0,1)$
\Ensure the frames $V_1,\dots,V_N$ of~\eqref{eq:dt} and the numbers $\sigma_0,\dots,\sigma_{N-1}$
\For{$j=0,\dots,N-1$}
	\State $Q_{j+1}^*V_j=X\Sigma Y^*$ \Comment{overlap, $O(mn^2)$ operations, and its SVD, $O(n^3)$}
	\State $\sigma_j\gets\sigma_{\min}(\Sigma)$; stop if $\sigma_j<\sigma_{\rm tol}$ \Comment{otherwise
	$\kappa(Q_{j+1}^*V_j)\le1/\sigma_{\rm tol}$ by~\eqref{eq:kappa}}
	\State $V_{j+1}\gets Q_{j+1}XY^*$ \Comment{$O(mn^2)$ operations}
\EndFor
\end{algorithmic}
\end{algorithm}

\section{Main results}\label{sec:main}

Let $P$ be an analytic family of orthogonal projectors of rank $n$ as in~\eqref{eq:proj}, let $V_0$ be a frame at
$k_0=-1$, and let $W$ be the parallel-transported frame started from $V_0$ (Definition~\ref{def:pt}). The discrete
parallel transport (Definition~\ref{def:dt}) computes frames $V_j$ at the nodes $-1=k_0<k_1<\dots<k_N=1$. Each step
projects the current frame onto the next subspace and takes the polar factor, and it is defined as long as
consecutive projectors differ by less than $1$ in norm (Lemma~\ref{lem:dt}). The computed frames approximate
$W_j=W(k_j)$ with error $O(h^2)$, where $h$ is the largest spacing (Proposition~\ref{prop:second}). On the Chebyshev
nodes~\eqref{eq:chebnodes}, $h\le\pi/N$, so the error is $O(N^{-2})$.

The difference $V_j-W_j$ in~\eqref{eq:secondorder} is usually treated as a discretization error
(Section~\ref{sec:intro}). The central observation of this paper is that, on Chebyshev nodes,
the computed frames are \emph{exact} samples of another frame of $P$, which is analytic on a fixed complex
neighborhood of $[-1,1]$. Since $V_j$ and $W_j$ are frames of the same subspace (Lemma~\ref{lem:dt}), they differ by a
unitary factor,
\begin{equation}\label{eq:gaugej}
	V_j=W_jU_j, \qquad U_j:=W_j^*V_j\in\U(n), \qquad j=0,\dots,N,
\end{equation}
and the new frame is $V=WU$, where $U$ is an analytic function with $U(k_j)=U_j$ that is unitary on $[-1,1]$ and
within $O(N^{-2})$ of $I$. Then $V$ can be interpolated from the samples $V_j$ with spectral accuracy (Theorem~\ref{thm:main}(c)), and $W$
can be recovered from $V$ to the same accuracy by removing the gauge $U$, which solves an $n\times n$ linear
differential equation whose coefficient is the connection matrix of $V$ (Definition~\ref{def:conn}; Corollaries~\ref{cor:modified}
and~\ref{cor:recover}).

Theorem~\ref{thm:main} resembles the classical bound for the Chebyshev interpolation of an analytic function, but
the resemblance is deceptive. For a fixed function analytic and bounded on the region $\mathcal E_\rho$ enclosed by
the Bernstein ellipse~\eqref{eq:ellipse}, Chebyshev interpolation converges at the rate $\rho^{-N}$, with a constant proportional to
the bound (Chapter~8 in~\cite{ATAP}). Unlike in the classical setting, the interpolated function is not fixed as $N$
grows: each $N$ gives a new discrete transport, so the computed frames, and with them $V$, change with $N$. The
classical bound therefore gives the rate $\rho'^{-N}$ only if $V$ is bounded on $\mathcal E_{\rho'}$ independently of
$N$; this uniformity is the substance of the theorem. For a fixed $N$, the interpolant of the computed frames is itself analytic, so
analyticity alone gives no information about convergence. Theorem~\ref{thm:main} establishes this uniform bound, so the
computed frames are interpolated at the same rate as $P$ itself.

\begin{theorem}[Analytic gauge and spectral accuracy]\label{thm:main}
Let $P\colon J\to\C^{m\times m}$ be analytic on an open interval $J$ that contains $[-1,1]$, with $P(k)$ an orthogonal
projector of rank $n$ for every $k\in J$, and suppose that $P$ extends to an analytic function
$P\colon\mathcal E_\rho\to\C^{m\times m}$ for some $\rho>1$. Let $V_0$ be a frame at $k=-1$, let $W$ be the
parallel-transported frame started from $V_0$, and let $k_0,\dots,k_N$ be the Chebyshev nodes~\eqref{eq:chebnodes}. For every $1<\rho'<\rho$ there are $C$ and $N_0$ such that for every
$N\ge N_0$ the discrete parallel transport $V_1,\dots,V_N$ started from $V_0$ is defined, and the following hold.
\begin{itemize}
	\item[(a)] \textbf{The gauges are samples of an analytic function, unitary on $[-1,1]$.} There is an analytic function
	$U\colon\mathcal E_{\rho'}\to\C^{n\times n}$, defined on $[-1,1]$ as the polar factor of the Chebyshev
	interpolant of the gauges~\eqref{eq:gaugej}, such that
	\begin{equation}\label{eq:main2}
	\begin{gathered}
		U(k_j)=U_j, \qquad j=0,\dots,N,\\
		\norm{U(k)-I}\le\frac C{N^2}, \qquad k\in\mathcal E_{\rho'}.
	\end{gathered}
	\end{equation}
	\item[(b)] \textbf{The computed frames are samples of an analytic frame.} The function $V:=WU$ extends from $[-1,1]$
	to an analytic function on $\mathcal E_{\rho'}$, and
	\begin{equation}\label{eq:main1}
	\begin{gathered}
		V_j=V(k_j), \qquad j=0,\dots,N,\\
		V(k)^*V(k)=I, \qquad V(k)V(k)^*=P(k), \qquad k\in[-1,1].
	\end{gathered}
	\end{equation}
	\item[(c)] \textbf{Interpolation is spectrally accurate.} The Chebyshev interpolant $\mathcal I_NV$, which by~(b)
	depends only on the computed frames $V_j=V(k_j)$, satisfies
	\begin{equation}\label{eq:main3}
	\begin{gathered}
		\norm{\mathcal I_NV-V}_{[-1,1]}\le C\rho'^{-N},\\
		\norm{(\mathcal I_NV)(\mathcal I_NV)^*-P}_{[-1,1]}\le C\rho'^{-N}, \qquad
		\norm{(\mathcal I_NV)^*(\mathcal I_NV)-I}_{[-1,1]}\le C\rho'^{-N}.
	\end{gathered}
	\end{equation}
\end{itemize}
\end{theorem}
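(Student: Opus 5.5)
The plan is to pass to the angular variable and treat the gauges as samples of an even, $2\pi$-periodic sequence. Write $k=-\cos\theta$, so that the Chebyshev nodes become the equispaced points $\theta_j=jh$ with $h=\pi/N$. By Remark~\ref{rmk:param}, $\widetilde W(\theta):=W(-\cos\theta)$ is the parallel-transported frame of $\widetilde P(\theta)=P(-\cos\theta)$. By~\eqref{eq:striptoellipse}, $\widetilde P$ is analytic and $2\pi$-periodic on the strip $\mathcal S_\beta$, $\beta=\log\rho$. Using Lemma~\ref{lem:rev}, I would extend the transport past $\theta=\pi$ by retracing the nodes backwards, which is the role of Lemma~\ref{lem:retrace}. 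This gives frames $V_j$ for $j=0,\dots,2N$ with $V_{2N-j}=V_j$, so the gauges $U_j=W_j^*V_j$ become an even sequence on the $2N$ equispaced points of the circle. Chebyshev interpolation of $U_j$ in $k$ is then trigonometric interpolation of this sequence in $\theta$. A bound $\norm{\cdot}\le CN^{-2}$ on the strip $\mathcal S_{\beta'}$, $\beta'=\log\rho'$, translates into the same bound on $\mathcal E_{\rho'}$.

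The first step is the one-step structure. By~\ref{P} and $P(k_{j+1})=W_{j+1}W_{j+1}^*$,
\[
V_{j+1}=\polar\bigl(W_{j+1}W_{j+1}^*W_jU_j\bigr)=W_{j+1}\,S_j\,U_j,
\]
with $S_j=\polar(W_{j+1}^*W_j)$; this is Lemma~\ref{lem:factor}. Hence
\[
U_{j+1}=S_jU_j,\qquad S_j=S(\theta_j,h),\qquad S(\theta,h):=\polar\bigl(\widetilde W^\sharp(\theta+h)\widetilde W(\theta)\bigr),
\]
where the polar factor is continued analytically as $X(X^\sharp X)^{-1/2}$ for $X$ near $I$. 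From~\eqref{eq:katoext}, the Taylor expansion of $\widetilde W^\sharp(\theta+h)\widetilde W(\theta)$ is $I+h^2H(\theta)+O(h^3)$, where $H$ is "Hermitian" in the $\sharp$ sense. Taking the polar factor removes this even part, so
\[
S(\theta,h)=I+h^3A(\theta,h),
\]
with $A$ analytic and uniformly bounded for $\theta\in\mathcal S_{\beta''}$ (with $\beta'<\beta''<\beta$) and $\abs h\le h_0$, via Cauchy estimates. This is where Proposition~\ref{prop:second}'s one-step analysis enters.

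The main step, and the one I expect to be the obstacle, is the uniform-in-$N$ analytic control of the sequence $U_j$. My first attempt would work in the weighted Wiener algebra of $2N$-periodic sequences, with norm $\norm{x}_w=\sum_{\abs l\le N}\abs{\hat x_l}e^{\beta''\abs l}$, where $\hat x_l$ are the discrete Fourier coefficients. This norm is submultiplicative up to the aliasing wrap-around, and that must be checked carefully. The sampled sequence $A(\theta_j,h)$ has $\norm{\cdot}_w\le C$ uniformly in $N$, because the discrete coefficients of a bounded analytic periodic function decay like $e^{-\beta\abs l}$ even after aliasing. In Fourier variables the recurrence reads
\[
(e^{\I lh}-1)\,\widehat{(U-I)}_l=h^3\,\widehat{(AU)}_l .
\]
For $0<\abs l\le N$ one has $\abs{e^{\I lh}-1}\ge c\abs l h$, so the nonzero modes of $U-I$ have $\norm{\cdot}_w\le Ch^2\norm{AU}_w$. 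The zero mode would be handled separately using $U_0=I$. That gives $\abs{\widehat{(U-I)}_0}\le\sum_{l\ne0}\abs{\cdot}$, which is again $O(h^2)$. A contraction/bootstrap argument for $N\ge N_0$ then yields $\norm{U-I}_w\le Ch^2=O(N^{-2})$. Periodicity of the sequence, which comes from the retrace, is exactly what makes the Fourier argument legitimate. The even symmetry makes the trigonometric interpolant a polynomial in $\cos\theta$, that is, the Chebyshev interpolant $\mathcal I_N U$, and it is bounded by $CN^{-2}$ from $I$ on $\mathcal E_{\rho'}$.

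The remaining steps are routine. Set $U:=\mathcal I_N U\,\bigl((\mathcal I_NU)^\sharp\,\mathcal I_NU\bigr)^{-1/2}$, which is analytic on $\mathcal E_{\rho'}$ since its argument is $O(N^{-2})$-close to $I$. It equals the polar factor on $[-1,1]$ and is unitary there. It agrees with $U_j$ at the nodes because the $U_j$ are already unitary. It stays within $CN^{-2}$ of $I$, which proves (a). For (b), $V=WU$ is analytic on $\mathcal E_{\rho'}$ by Lemma~\ref{lem:continue}, interpolates the $V_j$, and is a frame of $P$ on $[-1,1]$. For (c), $V$ is bounded on $\mathcal E_{\rho'}$ independently of $N$, so the classical Chebyshev bound (Chapter~8 of~\cite{ATAP}), applied on a slightly larger ellipse, gives the $\rho'^{-N}$ rate after relabelling $\rho'$. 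The two product bounds then follow from $VV^*=P$, $V^*V=I$ and the uniform boundedness of $\mathcal I_NV$.
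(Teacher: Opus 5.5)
Your proposal is correct and follows essentially the same route as the paper. The shared steps are the even extension through reversibility (Lemma~\ref{lem:retrace}), the factorization $U_{j+1}=M_jU_j$, and an $O(h^3)$ bound for the one-step factor on a strip obtained by treating the step as a complex variable (Proposition~\ref{prop:strip}); the paper gets the third-order zero from Lemma~\ref{lem:step} on the real axis plus the identity theorem, rather than from your direct $\sharp$-Taylor expansion. They continue with the weighted discrete Fourier norm, division by $e^{\I lh}-1$, and control of the zero mode by $U_0=I$ (Lemmas~\ref{lem:dft} and~\ref{lem:running}; the norm is in fact exactly submultiplicative), and end with the analytic polar factor and the Chebyshev bound that is uniform in the bound $B$.

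The only thing to tidy is the strip bookkeeping. The weight exponent in your norm must be strictly smaller than the half-width of the strip on which $A(\cdot,h)$ is bounded, as in Lemma~\ref{lem:dft}(i); otherwise the weighted sum of its aliased coefficients can only be bounded by $O(N)$, which would cost the factor $N^{-1}$. Choosing $\beta'<\sigma<\beta''<\beta$, with $A$ bounded on $\mathcal S_{\beta''}$ and weight $e^{\sigma\abs l}$, fixes this and also leaves the room you need for part~(c).
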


\begin{remark}[The gauge function]\label{rmk:gauge}
The polar factor in Theorem~\ref{thm:main}(a) is needed since the interpolant of the gauges need not be unitary
between the nodes; at the nodes, it equals the gauges, which are already unitary, so $U(k_j)=U_j$. The gauges, and
hence $U$, depend only on the parallel-transported frame at the nodes: $U_{j+1}=\polar(W_{j+1}^*W_j)\,U_j$ with
$U_0=I$ (Lemma~\ref{lem:factor}).
\end{remark}

Theorem~\ref{thm:main}(c) concerns the frame $V=WU$, not the parallel-transported frame $W$. On $[-1,1]$,
\begin{equation}\label{eq:IVminusW}
	\mathcal I_NV-W=(\mathcal I_NV-V)+W(U-I),
\end{equation}
where the first term is $O(\rho'^{-N})$ by Theorem~\ref{thm:main}(c), while the second is analytic and $O(N^{-2})$ by
part~(a) and equals $V_j-W_j$ at the nodes, the difference of Proposition~\ref{prop:second}. Thus the interpolant approaches $W$ only at
second order; the cause is the gauge $U$ in the term $W(U-I)$, not an interpolation error. Corollary~\ref{cor:modified}
describes $U$ through the connection matrix of $V$, and Corollary~\ref{cor:recover} uses this to remove the gauge and
recover $W$ to spectral accuracy.

Theorem~\ref{thm:main} has a differential-equation form: the frame $V$ solves the parallel-transport
equation~\eqref{eq:kato} with one additional term $V\Omega$, where $\Omega$ is the connection matrix of $V$. Indeed,
differentiating $V=WU$ and using $W'=P'W$ gives $V'=P'V+V\,U^*U'$, and $U^*U'=V^*V'$ by~\eqref{eq:conn}. The
computed frames are therefore exact samples of a frame that is parallel-transported for a slightly modified, analytic
connection. This viewpoint is reminiscent of backward error analysis for numerical integrators, which relates a
one-step method to a modified differential equation (Chapter~IX in~\cite{HLW06}). The essential result here, however, is that the analytic frame $V$, which depends on $N$, remains
uniformly bounded on every ellipse $\mathcal E_{\rho'}$, $1<\rho'<\rho$, yielding spectral interpolation at the same
rate as $P$.

\begin{corollary}[Modified parallel-transport equation]\label{cor:modified}
Let $V$ be as in Theorem~\ref{thm:main}, and let $\Omega=V^\sharp V'$. For every
$1<\rho'<\rho$ there are $C$ and $N_0$ such that for every $N\ge N_0$ the function $\Omega$ is analytic on
$\mathcal E_{\rho'}$, equals the connection matrix $V^*V'$ of $V$ on $[-1,1]$, and
\begin{gather}
	V'=P'V+V\Omega \ \text{on }\mathcal E_{\rho'}, \qquad V(-1)=V_0, \label{eq:modkato}\\
	V_j=V(k_j), \qquad j=0,\dots,N, \notag\\
	\norm{\Omega(k)}\le\frac C{N^2}, \qquad k\in\mathcal E_{\rho'} . \label{eq:Omegabound}
\end{gather}
\end{corollary}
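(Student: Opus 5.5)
This corollary follows from Theorem~\ref{thm:main} by differentiating $V=WU$ and continuing the identities off the real axis; no new estimates on the discrete transport should be needed.

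Fix $1<\rho'<\rho$ and choose an intermediate $\rho''$ with $\rho'<\rho''<\rho$. Apply Theorem~\ref{thm:main} with $\rho''$ in place of $\rho'$. This gives $C$ and $N_0$ such that, for $N\ge N_0$, $U$ is analytic on $\mathcal E_{\rho''}$ with $\norm{U-I}_{\mathcal E_{\rho''}}\le C/N^2$, and $V=WU$ is analytic there.

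By Lemma~\ref{lem:continue}, applied on the simply connected, real-symmetric set $\mathcal E_{\rho}$, $W$ is analytic and satisfies $W'=P'W$, $W^\sharp W=I$ and $W^\sharp W'=0$. Since $\mathcal E_{\rho''}$ is symmetric about the real axis, $V^\sharp=U^\sharp W^\sharp$ is analytic there, and so is $\Omega=V^\sharp V'$. On $[-1,1]$, $V^\sharp=V^*$, so $\Omega$ is the connection matrix.

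For the differential equation, compute $V'=W'U+WU'=P'V+WU'$. Then use~\eqref{eq:katoext}:
\begin{equation*}
	\Omega=U^\sharp W^\sharp(W'U+WU')=U^\sharp U' .
\end{equation*}
It remains to show $WU'=V\Omega=WUU^\sharp U'$, which requires $UU^\sharp=I$. On $[-1,1]$, $U$ is unitary by Theorem~\ref{thm:main}(a), so $U^\sharp U=UU^\sharp=I$ there. The identity theorem extends $UU^\sharp=I$ to the connected set $\mathcal E_{\rho''}$, which gives~\eqref{eq:modkato}. The initial condition is $V(-1)=W(-1)U(-1)=V_0U_0=V_0$, since $U_0=I$, and $V_j=V(k_j)$ is Theorem~\ref{thm:main}(b).

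The bound~\eqref{eq:Omegabound} is the only step that needs care, because $\Omega=U^\sharp U'$ involves the derivative of $U$.
\begin{itemize}
	\item Write $U'=(U-I)'$. By Cauchy's estimate on discs of radius $\delta=\operatorname{dist}(\partial\mathcal E_{\rho'},\partial\mathcal E_{\rho''})>0$ centered at points of $\mathcal E_{\rho'}$, which lie inside $\mathcal E_{\rho''}$, we get $\norm{U'}_{\mathcal E_{\rho'}}\le C/(\delta N^2)$. This is where the margin from shrinking to $\rho'$ is used.
	\item Bound $U^\sharp$: $\norm{U^\sharp(z)}=\norm{U(\bar z)}\le 1+C/N^2$, because $\mathcal E_{\rho''}$ is closed under conjugation.
	\item Multiplying, $\norm{\Omega}_{\mathcal E_{\rho'}}\le C'/N^2$, with $C'$ independent of $N$.
\end{itemize}
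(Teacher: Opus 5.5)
Your proposal is correct and follows essentially the same route as the paper: apply Theorem~\ref{thm:main} on an intermediate ellipse $\mathcal E_{\rho''}$, use \eqref{eq:katoext} to get $\Omega=U^\sharp U'$, extend a real-axis identity by the identity theorem, and bound $\Omega$ by a Cauchy estimate for $(U-I)'$ together with $\norm{U^\sharp}\le 1+C/N^2$. The only difference is cosmetic: you extend $UU^\sharp=I$ off $[-1,1]$, whereas the paper extends the equation $V'=P'V+V\Omega$ itself, and both work equally well.
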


When the parallel-transported frame itself is needed, the gauge can be removed with one $n\times n$ linear
differential equation, using only the computed frames. The gauges $U_j=W_j^*V_j$ involve the unknown frame $W$, but their
connection does not: $\Omega=V^*V'=U^*U'$ on $[-1,1]$ by~\eqref{eq:conn}, so the gauge
solves $U'=U\Omega$,
$U(-1)=I$, and $W=VU^*$. In practice, $V$ is replaced by its interpolant $\mathcal I_NV$, which is not exactly a frame
between the nodes, and $\Omega$ by the skew-Hermitian part $\Omega_{\mathcal I}$ of
$(\mathcal I_NV)^*(\mathcal I_NV)'$. Since $\mathcal I_NV$ and its derivative approximate $V$ and $V'$ with geometrically decaying error,
the matrix $\Omega_{\mathcal I}$ and the solution $U_{\mathcal I}$ of the gauge equation with $\Omega_{\mathcal I}$
approximate $\Omega$ and $U$ in the same way.

\begin{corollary}[Recovering the parallel-transported frame]\label{cor:recover}
Let $U$, $V$ and $W$ be as in Theorem~\ref{thm:main} and $\Omega$ as in Corollary~\ref{cor:modified}, let
\begin{equation}\label{eq:OmegaI}
	\Omega_{\mathcal I}=\tfrac12\bigl((\mathcal I_NV)^*(\mathcal I_NV)'-((\mathcal I_NV)')^*(\mathcal I_NV)\bigr)
	\quad\text{on }[-1,1],
\end{equation}
and let $U_{\mathcal I}\colon[-1,1]\to\C^{n\times n}$ be the solution of
\begin{equation}\label{eq:UIeq}
	U_{\mathcal I}'=U_{\mathcal I}\,\Omega_{\mathcal I}, \qquad U_{\mathcal I}(-1)=I.
\end{equation}
Then $U_{\mathcal I}$ is unitary, and for every $1<\rho'<\rho$ there are $C$ and $N_0$ such that for every $N\ge N_0$
\begin{equation}\label{eq:katorec}
\begin{gathered}
	\norm{\Omega_{\mathcal I}-\Omega}_{[-1,1]}\le C\rho'^{-N}, \qquad
	\norm{U_{\mathcal I}-U}_{[-1,1]}\le C\rho'^{-N},\\
	\norm{(\mathcal I_NV)\,U_{\mathcal I}^*-W}_{[-1,1]}\le C\rho'^{-N}.
\end{gathered}
\end{equation}
\end{corollary}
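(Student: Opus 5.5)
The plan is to treat Corollary~\ref{cor:recover} as a perturbation of the exact gauge equation, taking Theorem~\ref{thm:main} and Corollary~\ref{cor:modified} as given. On $[-1,1]$ we have $V^*V=I$, so $\Omega=V^*V'$ is skew-Hermitian there and coincides with its own skew-Hermitian part $\tfrac12(V^*V'-(V')^*V)$. The exact gauge satisfies $U'=U\Omega$ with $U(-1)=I$: from $V=WU$ and \eqref{eq:conn} we get $U^*U'=V^*V'=\Omega$, and $U$ is unitary on $[-1,1]$. We also have $W=VU^*$ there.

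\textbf{Unitarity of $U_{\mathcal I}$.} The matrix $\Omega_{\mathcal I}$ is skew-Hermitian by construction. Hence
\[
(U_{\mathcal I}U_{\mathcal I}^*)'=U_{\mathcal I}(\Omega_{\mathcal I}+\Omega_{\mathcal I}^*)U_{\mathcal I}^*=0,
\]
so $U_{\mathcal I}U_{\mathcal I}^*=I$ on $[-1,1]$. This holds for every $N$.

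\textbf{Bound on $\Omega_{\mathcal I}-\Omega$.} Write $E=\mathcal I_NV-V$. By Theorem~\ref{thm:main}(c), $\norm{E}_{[-1,1]}\le C\rho'^{-N}$. For $E'$ I would use the standard Chebyshev bound. Fix $\rho''$ with $\rho'<\rho''<\rho$ and apply Theorem~\ref{thm:main} with $\rho''$ in place of $\rho'$. This makes $V$ analytic and uniformly bounded on $\mathcal E_{\rho''}$, with bound $\norm{W}_{\mathcal E_{\rho''}}(1+C/N^2)$, independent of $N$.
\begin{itemize}
\item The Chebyshev coefficients of $V$ then decay like $M\rho''^{-l}$.
\item $\mathcal I_NV-V$ is expressed through aliased coefficients, giving an error series $\sum_l c_lT_l$ with $\abs{c_l}\lesssim M\rho''^{-l}$ for $l\ge N$ roughly.
\item Differentiating termwise with $\norm{T_l'}_{[-1,1]}=l^2$ gives $\norm{E'}_{[-1,1]}\le CN^2\rho''^{-N}\le C\rho'^{-N}$.
\end{itemize}
Alternatively, Bernstein's inequality for polynomials combined with Cauchy estimates on a slightly smaller ellipse gives the same bound. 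Also, $\norm{V'}_{[-1,1]}$ is bounded uniformly in $N$ by the Cauchy estimate from the bound on $\mathcal E_{\rho''}$. Expanding $(V+E)^*(V+E)'$ and taking the skew-Hermitian part, the main term is exactly $\Omega$. The remainders are products involving $E$ or $E'$, which gives $\norm{\Omega_{\mathcal I}-\Omega}_{[-1,1]}\le C\rho'^{-N}$.

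\textbf{Bound on $U_{\mathcal I}-U$.} Let $D=U_{\mathcal I}-U$. Then $D'=D\Omega_{\mathcal I}+U(\Omega_{\mathcal I}-\Omega)$ with $D(-1)=0$. Since $U_{\mathcal I}$ is unitary, the propagator of $Y'=Y\Omega_{\mathcal I}$ has norm $1$. Variation of constants therefore gives
\[
\norm{D(k)}\le\int_{-1}^{k}\norm{U}\,\norm{\Omega_{\mathcal I}-\Omega}\le 2C\rho'^{-N},
\]
using $\norm U=1$ on $[-1,1]$. Finally,
\[
(\mathcal I_NV)U_{\mathcal I}^*-W=EU_{\mathcal I}^*+V(U_{\mathcal I}-U)^*,
\]
and both terms are $O(\rho'^{-N})$ because $\norm V=\norm{U_{\mathcal I}}=1$.

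\textbf{Main obstacle.} The delicate step is the derivative bound $\norm{(\mathcal I_NV)'-V'}\le C\rho'^{-N}$. Theorem~\ref{thm:main}(c) controls only the values, so the derivative bound needs the $N$-uniform analytic bound on a strictly larger ellipse. That is why I apply the theorem at $\rho''$ and absorb the polynomial factor $N^2$ into the gap between $\rho''$ and $\rho'$. It must also be checked that $N_0$ is taken as the threshold for $\rho''$, so that it works for all the quantities involved.
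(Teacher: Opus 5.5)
Your proposal is correct and follows the paper's proof essentially step for step: the same skew-Hermitian argument for the unitarity of $U_{\mathcal I}$, the same expansion of $(V+E)^*(V+E)'$ to bound $\Omega_{\mathcal I}-\Omega$, and the same splitting $EU_{\mathcal I}^*+V(U_{\mathcal I}-U)^*$ at the end. The derivative bound you flag as the main obstacle is exactly what Lemma~\ref{lem:cheb} gives once $V$ is bounded on $\mathcal E_{\rho''}$ uniformly in $N$, as in the proof of Theorem~\ref{thm:main}(c); your other deviations are cosmetic, namely a Cauchy estimate instead of \eqref{eq:modkato} for $\norm{V'}$, and variation of constants instead of differentiating $U_{\mathcal I}U^*$.
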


In practice, we solve~\eqref{eq:UIeq} by Picard iteration on its integral form,
\begin{equation}\label{eq:picard}
	U^{(\nu+1)}(k)=I+\int_{-1}^kU^{(\nu)}(t)\,\Omega_{\mathcal I}(t)\,\D t, \qquad U^{(0)}=I.
\end{equation}
By the triangle inequality with~\eqref{eq:Omegabound} and~\eqref{eq:katorec},
$\norm{\Omega_{\mathcal I}}_{[-1,1]}=O(N^{-2})$. Since $U_{\mathcal I}$ is unitary, the classical Picard estimate gives
\begin{equation}\label{eq:picardU}
	\norm{U^{(\nu)}-U_{\mathcal I}}_{[-1,1]}\le\frac{\bigl(2\norm{\Omega_{\mathcal I}}_{[-1,1]}\bigr)^{\nu+1}}{(\nu+1)!},
	\qquad \nu=0,1,\dots,
\end{equation}
and, together with the bound on $W$ in~\eqref{eq:katorec}, this implies
\begin{equation}\label{eq:picardbound}
	\norm{(\mathcal I_NV)\bigl(U^{(\nu)}\bigr)^*-W}_{[-1,1]}\le C\rho'^{-N}+CN^{-2(\nu+1)}, \qquad \nu=0,1,\dots,
\end{equation}
so each iteration reduces the error by a factor $O(N^{-2})$ until it reaches the interpolation error $C\rho'^{-N}$. The integrals are computed by spectral integration on the nodes. For $n=1$ the solution is explicit,
$U_{\mathcal I}=\exp\int_{-1}^k\Omega_{\mathcal I}$.

\section{Estimates for matrix functions}\label{sec:estimates}

This section collects the estimates used in the proofs. Most are standard facts for scalar functions that hold for
matrix functions in the spectral norm, with the same proof. For a continuous function $F$ on a
piecewise smooth curve $\gamma\subset\C$ with values in $\C^{p\times q}$,
\begin{equation}\label{eq:intnorm}
	\Bigl\|\int_\gamma F(\zeta)\,\D\zeta\Bigr\|\le\operatorname{length}(\gamma)\,\norm F_\gamma ,
\end{equation}
since $\abs{u^*(\int_\gamma F\,\D\zeta)v}=\abs{\int_\gamma u^*Fv\,\D\zeta}\le\operatorname{length}(\gamma)\,\norm F_\gamma$
for unit vectors $u\in\C^p$ and $v\in\C^q$. Hence a bound for scalar analytic functions whose proof estimates a contour
integral holds for matrix functions with $\abs\cdot$ replaced by $\norm\cdot$ and the same constant.

\subsection{Analytic matrix functions}\label{sec:analytic}

The following lemma is the Schwarz lemma (Lemma~2.1 in Chapter~8 of~\cite{SteinShakarchi03}) for a zero of order $q$ and
for matrix functions, with the same proof.

\begin{lemma}[Schwarz lemma]\label{lem:schwarz}
Let $X$ be an analytic function on the disk $\{h\in\C:\abs h<r\}$ with values in $\C^{n\times n}$ and
$\norm{X(h)}\le1$ there, and suppose that $X(h)=O(h^q)$ as $h\to0$ for an integer $q\ge1$. Then
\begin{equation}\label{eq:schwarz}
	\norm{X(h)}\le\Bigl(\frac{\abs h}r\Bigr)^q, \qquad \abs h<r.
\end{equation}
\end{lemma}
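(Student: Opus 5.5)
The statement to prove is Lemma~\ref{lem:schwarz}, the matrix-valued Schwarz lemma with a zero of order $q$. I would follow the classical proof: divide out the zero, apply the maximum modulus principle on slightly smaller disks, and let the radius increase to $r$.

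First, since $X$ is analytic on $\{\abs h<r\}$ and $X(h)=O(h^q)$ as $h\to0$, every entry has a Taylor series at $0$ whose coefficients of order $<q$ vanish. Hence $Y(h):=h^{-q}X(h)$ has a removable singularity at $0$ and extends to an analytic matrix function on the disk.

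Second, fix $h$ with $\abs h<r$ and choose $s$ with $\abs h<s<r$. To keep the argument inside the framework of~\eqref{eq:intnorm}, I would avoid a matrix maximum principle and reduce to scalars. Take unit vectors $u,v\in\C^n$ and set $f(h)=u^*Y(h)v$, a scalar analytic function. Either directly by the scalar maximum modulus principle, or via Cauchy's formula $f(h)=\frac1{2\pi\I}\oint_{\abs\zeta=s}\frac{f(\zeta)}{\zeta-h}\D\zeta$ applied to powers $f^p$ with the usual $p$-th root trick, we get
\[
\abs{f(h)}\le\max_{\abs\zeta=s}\abs{f(\zeta)}\le\max_{\abs\zeta=s}\norm{Y(\zeta)}\le s^{-q}\max_{\abs\zeta=s}\norm{X(\zeta)}\le s^{-q}.
\]
Taking the supremum over unit $u,v$ gives $\norm{Y(h)}\le s^{-q}$.

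Third, let $s\to r$ to obtain $\norm{Y(h)}\le r^{-q}$. Multiplying by $\abs h^q$ then gives~\eqref{eq:schwarz}.

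No step here is a real obstacle. The only point needing care is transferring the maximum principle to the spectral norm, and the reduction to the scalar functions $u^*Yv$ handles that cleanly.
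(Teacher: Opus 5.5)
Your proposal is correct and matches the paper's proof: divide out the zero of order $q$, reduce to the scalar functions $u^*X(h)v/h^q$ for unit vectors $u,v$, bound them by $s^{-q}$ on $\abs h\le s$ via the maximum modulus principle, let $s\to r$, and take the supremum over $u$ and $v$. Forming the matrix function $Y=h^{-q}X$ first, and your optional Cauchy-formula route to the maximum principle, are only cosmetic differences.
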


\begin{proof}
For unit vectors $u,v\in\C^n$, the function $u^*X(h)v/h^q$ is analytic on the disk, since its singularity at $0$ is
removable, and it is bounded by $s^{-q}$ on each circle $\abs h=s<r$, hence on $\abs h\le s$ by the maximum modulus
principle. Letting $s\to r$ gives $\abs{u^*X(h)v}\le(\abs h/r)^q$, and the supremum over $u$ and $v$
is~\eqref{eq:schwarz}.
\end{proof}

The next lemma bounds the Chebyshev interpolation error and its derivative uniformly over the class of analytic
functions bounded by $B$ on an ellipse. The uniformity matters since the frames interpolated in
Theorem~\ref{thm:main} change with $N$.

\begin{lemma}[Chebyshev interpolation of analytic functions]\label{lem:cheb}
Let $1<\rho'<\rho''$. There is $C$, depending only on $\rho'$ and $\rho''$, such that for every analytic matrix
function $F$ on $\mathcal E_{\rho''}$ with $\norm F_{\mathcal E_{\rho''}}\le B$ and every $N\ge1$
\begin{equation}\label{eq:chebbounds}
	\norm{F-\mathcal I_NF}_{[-1,1]}\le CB\rho'^{-N}, \qquad \norm{F'-(\mathcal I_NF)'}_{[-1,1]}\le CB\rho'^{-N}.
\end{equation}
\end{lemma}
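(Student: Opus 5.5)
The plan is the classical Hermite contour-integral proof, applied entrywise via the matrix estimate~\eqref{eq:intnorm}. Fix an intermediate $\rho_1$ with $\rho'<\rho_1<\rho''$ and let $\Gamma$ be the Bernstein ellipse of parameter $\rho_1$, which lies inside $\mathcal E_{\rho''}$. For $k\in[-1,1]$ the Hermite formula for the interpolation error reads
\begin{equation*}
	F(k)-(\mathcal I_NF)(k)=\frac{1}{2\pi\I}\int_\Gamma\frac{\ell(k)}{\ell(\zeta)}\,\frac{F(\zeta)}{\zeta-k}\,\D\zeta,
	\qquad \ell(x)=\prod_{j=0}^N(x-k_j).
\end{equation*}
For scalar $F$ this follows from the residue theorem: the integrand has residues at $\zeta=k$ and at the nodes, and the node residues sum to $-\mathcal I_NF(k)$. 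It holds entrywise for matrix $F$, hence for $F$ itself.

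\emph{Bounding the nodal polynomial.} For Chebyshev points of the second kind, $\ell(x)=c_N(x^2-1)U_{N-1}(x)$ with $c_N=2^{1-N}$. Writing $x=\frac12(z+z^{-1})$, one has $(x^2-1)U_{N-1}(x)=\frac14(z-z^{-1})(z^N-z^{-N})$. On $[-1,1]$ (where $\abs z=1$) this gives $\abs{\ell(k)}\le 2^{1-N}$. On $\Gamma$ (where $\abs z=\rho_1$) it gives $\abs{\ell(\zeta)}\ge 2^{1-N}\cdot\frac14(\rho_1-\rho_1^{-1})(\rho_1^N-\rho_1^{-N})$. Hence
\begin{equation*}
	\sup_{k\in[-1,1],\ \zeta\in\Gamma}\abs{\ell(k)/\ell(\zeta)}\le C\rho_1^{-N}.
\end{equation*}
The distance from $\Gamma$ to $[-1,1]$ is a positive constant depending only on $\rho_1$. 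With~\eqref{eq:intnorm}, these estimates give the first bound in~\eqref{eq:chebbounds} with rate $\rho_1^{-N}\le\rho'^{-N}$.

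\emph{The derivative, which is the main step.} One option is to differentiate the Hermite formula in $k$, which requires bounding $\ell'(k)$ on $[-1,1]$; this costs a factor $N^2$ (Markov). Instead, I would apply the error formula at complex points. The error $E=F-\mathcal I_NF$ is analytic on $\mathcal E_{\rho''}$, and the same Hermite formula holds for $k$ inside a smaller ellipse $\mathcal E_{\rho_2}$, where $1<\rho_2<\rho_1$. There $\abs{\ell(k)}\le C\rho_2^N2^{-N}$, so
\begin{equation*}
	\norm E_{\mathcal E_{\rho_2}}\le CB(\rho_2/\rho_1)^N.
\end{equation*}
Choose $\rho_1$ close to $\rho''$ and $\rho_2$ close to $1$ so that $\rho_1/\rho_2\ge\rho'$. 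Cauchy's estimate for $E'$ on $[-1,1]$, using circles of fixed radius $\delta=\operatorname{dist}([-1,1],\partial\mathcal E_{\rho_2})>0$ and~\eqref{eq:intnorm}, then gives
\begin{equation*}
	\norm{E'}_{[-1,1]}\le\delta^{-1}\norm E_{\mathcal E_{\rho_2}}\le CB\rho'^{-N}.
\end{equation*}
The same choice of $\rho_1,\rho_2$ also covers the first bound. All constants depend only on $\rho'$ and $\rho''$, through $\rho_1$, $\rho_2$ and $\delta$, and not on $F$ or $N$. This is the uniformity the lemma requires.
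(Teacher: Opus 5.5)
Your proof is correct, but it takes a different route from the paper's. The paper proves the lemma by citation:
\begin{itemize}
\item The first bound is Theorem~8.2 of Trefethen's book, which gives the full rate $\rho''^{-N}$ with the explicit constant $4/(\rho''-1)$.
\item The derivative bound is Theorem~21.1 there, which is stated only for a fixed function. To get a constant proportional to $B$, the paper observes that this proof uses $F$ only through the Chebyshev coefficient bound $2B\rho''^{-l}$ of Theorem~8.1.
\item The matrix case follows from the contour-integral proof of Theorem~8.1 via~\eqref{eq:intnorm}.
\end{itemize}

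You instead argue from first principles. You use the Hermite integral formula and the explicit nodal polynomial $\ell(x)=2^{1-N}\cdot\tfrac14(z-z^{-1})(z^N-z^{-N})$, $x=\tfrac12(z+z^{-1})$. For the derivative, you bound $E=F-\mathcal I_NF$ on a smaller ellipse $\mathcal E_{\rho_2}$ and then apply a Cauchy estimate. This puts the cost of differentiation into the rate, $(\rho_1/\rho_2)^{-N}$ instead of $\rho_1^{-N}$, rather than into an $N^2$ Markov factor. The choice $\rho'<\rho_1<\rho''$, $\rho_2=\rho_1/\rho'$ closes the argument.

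Your version is self-contained. It also makes the uniformity in $F$ explicit line by line, instead of relying on inspection of a textbook proof; that uniformity is the point of the lemma, since the frames interpolated in Theorem~\ref{thm:main} change with $N$. It also gives bounds on a complex neighborhood of $[-1,1]$ and for higher derivatives at no extra cost. The paper's citation is shorter and comes with sharper explicit constants. For the lemma as stated, which only asks for $\rho'^{-N}$ with $\rho'<\rho''$, that difference does not matter.
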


\begin{proof}
All theorem and equation numbers in this proof refer to~\cite{ATAP}. The first bound is~(8.3) in Theorem~8.2, with
constant $4/(\rho''-1)$. The second is Theorem~21.1, stated for a fixed function as $O(\rho'^{-N})$; its proof uses
$F$ only through the bound $2B\rho''^{-l}$ of Theorem~8.1 on the Chebyshev coefficients, so its constant is
proportional to $B$. Both bounds hold for matrix
functions by~\eqref{eq:intnorm}, since Theorem~8.1 is proved by estimating a contour integral.
\end{proof}

The next lemma is the analytic continuation of the polar factor~\eqref{eq:polar} of a matrix function close to $I$,
with $X^\sharp$ in place of $X^*$ and the inverse square root given by its binomial series, together with a bound.
For a real matrix that depends analytically on a real variable and is nonsingular, the polar factor is analytic
(Theorem~3 in~\cite{MR93}); the lemma continues it off the real axis. The lemma allows several variables: on an
open set $\mathcal D\subset\C^d$, analytic means that near each point the entries are convergent power series in the
$d$ variables, and $X^\sharp(z)=X(\bar z)^*$, the bar conjugating each coordinate, which is analytic as in
Section~\ref{sec:notation}.

\begin{lemma}[Analytic polar factor]\label{lem:polar}
Let $\mathcal D\subset\C^d$ be open and symmetric about $\R^d$, and let $X\colon\mathcal D\to\C^{n\times n}$ be
analytic with $\norm{X-I}\le\tau\le\frac14$ on $\mathcal D$. Let $a_l$ be the Taylor coefficients of
$(1-t)^{-1/2}=\sum_{l\ge0}a_lt^l$, and
\begin{equation}\label{eq:binom}
	(X^\sharp X)^{-1/2}:=\sum_{l=0}^\infty a_l\,(I-X^\sharp X)^l \qquad\text{on }\mathcal D.
\end{equation}
Then $\polar(X)$ continues analytically to $\mathcal D$ as $X(X^\sharp X)^{-1/2}$, and
\begin{equation}\label{eq:polarext}
	\norm{X(X^\sharp X)^{-1/2}-I}\le4\tau\le1 \qquad\text{on }\mathcal D.
\end{equation}
\end{lemma}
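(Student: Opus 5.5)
Write $E:=I-X^\sharp X$. The plan is to bound $E$ first, then show the series converges and continues the polar factor, and finally estimate its distance from $I$.

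\emph{Step 1: bound $E$.} Put $D:=X-I$, so $\norm{D}\le\tau$ on $\mathcal D$. Since $\mathcal D$ is symmetric about $\R^d$, the function $D^\sharp(z)=D(\bar z)^*$ is analytic on $\mathcal D$ and satisfies $\norm{D^\sharp(z)}=\norm{D(\bar z)}\le\tau$. We have $X^\sharp=I+D^\sharp$, and hence
\[
E=I-(I+D^\sharp)(I+D)=-(D+D^\sharp+D^\sharp D),
\]
which gives $\norm{E}\le2\tau+\tau^2\le\frac9{16}<1$.

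\emph{Step 2: the series is analytic and continues the polar factor.} The coefficients $a_l$ are positive, and $\sum_l a_l t^l$ converges absolutely for $\abs t<1$. Since $\norm{E}\le\frac9{16}$, the series~\eqref{eq:binom} converges uniformly on $\mathcal D$. Its partial sums are analytic polynomials in $X$ and $X^\sharp$, so the limit $S:=(X^\sharp X)^{-1/2}$ is analytic on $\mathcal D$ by the Weierstrass theorem, applied entrywise and in several variables.

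On $\mathcal D\cap\R^d$ we have $X^\sharp X=X^*X$. This matrix is Hermitian with spectrum in $[1-\norm{E},1+\norm{E}]\subset(0,2)$, so the power series $\sum_l a_l(I-X^*X)^l$ is exactly the positive definite inverse square root, by the spectral theorem applied to the scalar identity. Hence $XS=\polar(X)$ on the real points, and $XS$ is its analytic continuation. It is unique by the identity theorem wherever $\mathcal D$ meets $\R^d$ in each component; uniqueness is not needed for the rest of the proof.

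\emph{Step 3: the bound~\eqref{eq:polarext}.} Write
\[
XS-I=(X-I)S+(S-I).
\]
Since all $a_l>0$,
\[
\norm{S-I}\le\sum_{l\ge1}a_l\norm{E}^l=(1-e)^{-1/2}-1,\qquad e:=2\tau+\tau^2 .
\]
This yields
\[
\norm{XS-I}\le\tau(1-e)^{-1/2}+(1-e)^{-1/2}-1=(1+\tau)(1-e)^{-1/2}-1 .
\]
It remains to check that $g(\tau):=(1+\tau)(1-2\tau-\tau^2)^{-1/2}-1\le4\tau$ for $0\le\tau\le\frac14$. The function $g$ is convex with $g(0)=0$, since it is a product of positive increasing convex factors minus $1$. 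So it suffices to check the endpoint. At $\tau=\frac14$ we have $1-e=1-\frac12-\frac1{16}=\frac7{16}$, and therefore
\[
g\bigl(\tfrac14\bigr)=\tfrac54\cdot\tfrac4{\sqrt7}-1=\tfrac5{\sqrt7}-1\approx0.89\le1 .
\]
By convexity $g(\tau)\le4\tau$ on the whole interval, and $4\tau\le1$ gives the final inequality in~\eqref{eq:polarext}.

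\emph{Main obstacle.} The only delicate point is this constant bookkeeping: one must verify that $\tau\le\frac14$ keeps $\norm{E}<1$ with enough margin to give the clean constant $4$. The convexity argument handles it.
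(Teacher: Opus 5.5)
Your proof is correct and follows the paper's argument: the same bound $\norm{X^\sharp X-I}\le2\tau+\tau^2$, the same identification of the series with $(X^*X)^{-1/2}$ at real points by diagonalization, and the same splitting $XS-I=(X-I)S+(S-I)$ with the positive-coefficient bound on $\norm{S-I}$. The only difference is the final scalar inequality. You keep the exact bound $(1+\tau)(1-2\tau-\tau^2)^{-1/2}-1$ and conclude by convexity, whereas the paper uses $(1-t)^{-1/2}-1\le t\le\frac94\tau$ followed by a linear estimate. One small slip: the step written as $\sum_{l\ge1}a_l\norm{E}^l=(1-e)^{-1/2}-1$ should be an inequality, since $\norm E\le e$.
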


\begin{proof}
Since $\mathcal D$ is symmetric about $\R^d$, $\norm{X^\sharp-I}\le\tau$ on $\mathcal D$ as well. Writing
$X^\sharp X-I=(X^\sharp-I)(X-I)+(X^\sharp-I)+(X-I)$ gives $t:=\norm{X^\sharp X-I}\le\tau^2+2\tau\le\frac94\tau\le\frac9{16}$,
since $\tau\le\frac14$. Hence the series~\eqref{eq:binom} converges uniformly on $\mathcal D$, and $X(X^\sharp X)^{-1/2}$
is analytic there. At a real point, $X^\sharp X=X^*X$ is Hermitian with eigenvalues in $[1-t,1+t]$, so
diagonalizing it shows that the series equals $(X^*X)^{-1/2}$, and $X(X^\sharp X)^{-1/2}=\polar(X)$
by~\eqref{eq:polar}.

For~\eqref{eq:polarext}, the coefficients $a_l$ are positive, so
\begin{equation}\label{eq:invsqrt}
	\norm{(X^\sharp X)^{-1/2}-I}\le\sum_{l\ge1}a_lt^l=(1-t)^{-1/2}-1\le t\le\tfrac94\tau ,
\end{equation}
where $(1-t)^{-1/2}-1=t/\bigl(\sqrt{1-t}\,(1+\sqrt{1-t})\bigr)\le t$ for $0\le t\le\frac9{16}$. Then
$X(X^\sharp X)^{-1/2}-I=(X-I)(X^\sharp X)^{-1/2}+\bigl((X^\sharp X)^{-1/2}-I\bigr)$ has norm at most $\tau(1+\frac94\tau)+\frac94\tau\le4\tau\le1$ by~\eqref{eq:invsqrt}.
\end{proof}

\subsection{Discrete Fourier analysis}\label{sec:dfa}

The substitution $k=-\cos\theta$ passes from Chebyshev interpolation to discrete Fourier analysis. It takes the
equispaced points
\begin{equation}\label{eq:thetaj}
	\theta_j=\frac{\pi j}N, \qquad 0\le j\le2N,
\end{equation}
to the Chebyshev nodes~\eqref{eq:chebnodes}, $k_j=-\cos\theta_j$ for $0\le j\le N$, and a function of $k\in[-1,1]$ to an
even $2\pi$-periodic function of $\theta$; the sign only orders the nodes from $k_0=-1$ to $k_N=1$.

A $2\pi$-periodic matrix function $X$ that is analytic and bounded by $B$ on the strip $\mathcal S_\beta$ has the Fourier
series
\begin{equation}\label{eq:fourier}
	X(\theta)=\sum_{m\in\Z}C_m\,e^{\I m\theta}, \qquad C_m:=\frac1{2\pi}\int_0^{2\pi}X(\theta)\,e^{-\I m\theta}\,\D\theta,
	\qquad \norm{C_m}\le Be^{-\beta\abs m} ,
\end{equation}
where the bound, (B.21) in Theorem~B.1 of~\cite{ATAP}, is proved by shifting the interval of integration into the
strip and so holds for matrix functions by~\eqref{eq:intnorm}.

A sequence $\{X_j\}$ of $2N$ matrices at the equispaced points $\theta_0,\dots,\theta_{2N-1}$ of~\eqref{eq:thetaj} is
indexed modulo $2N$, with $X_{2N}=X_0$, since $\theta_{2N}=2\pi$ and $\theta_0=0$ are the same point of the circle. It has the discrete Fourier coefficients $\hat X_l$, and it is recovered from them by the inversion
formula,
\begin{equation}\label{eq:dft}
	\hat X_l=\frac1{2N}\sum_{j=0}^{2N-1}X_j\,e^{-\I l\theta_j},\ \ -N<l\le N, \quad
	X_j=\sum_{l=-N+1}^{N}\hat X_l\,e^{\I l\theta_j},\ \ 0\le j<2N .
\end{equation}
The coefficients are taken with $-N<l\le N$, so the mode $l=N$, which equals $l=-N$ at the points $\theta_j$, is counted
once. If $X_j=X(\theta_j)$ with $X$ as in~\eqref{eq:fourier}, then the discrete Fourier coefficients satisfy the aliasing
formula
\begin{equation}\label{eq:alias}
	\hat X_l=\sum_{q\in\Z}C_{l+2Nq}, \qquad -N<l\le N ,
\end{equation}
since $e^{\I m\theta_j}=e^{\I l\theta_j}$ whenever $m-l$ is a multiple of $2N$.

A sequence $\{X_j\}$ is even if $X_{2N-j}=X_j$ for $0<j<2N$. Then $\hat X_{-l}=\hat X_l$ for $\abs l<N$
by~\eqref{eq:dft}, the modes $\pm l$ of the inversion formula combine into cosines, and $X_j$ is the value at
$\theta_j$ of the even trigonometric polynomial
\begin{equation}\label{eq:evencheb}
	\hat X_0+2\sum_{0<l<N}\hat X_l\cos l\theta+\hat X_N\cos N\theta .
\end{equation}
Since $\cos l\theta=(-1)^lT_l(-\cos\theta)$, this is a matrix polynomial of degree at most $N$ in $k=-\cos\theta$. It is
therefore the Chebyshev interpolant $\mathcal I_NF$ of any function $F$ with $F(k_j)=X_j$ for $0\le j\le N$,
by~\eqref{eq:interp}.

For $\sigma\ge0$, we measure a sequence $\{X_j\}$ with the coefficients $\hat X_l$ of~\eqref{eq:dft} by the weighted norm
\begin{equation}\label{eq:wnorm}
	\norm{\{X_j\}}_\sigma=\sum_{l=-N+1}^{N}\norm{\hat X_l}\,e^{\sigma\abs l} .
\end{equation}
The following lemma collects four properties of the norm.

\begin{lemma}[Weighted discrete Fourier norm]\label{lem:dft}
Let $\{X_j\}$ and $\{Y_j\}$ be sequences of $2N$ matrices, as in~\eqref{eq:dft}, and let $\sigma\ge0$.
\begin{itemize}
	\item[(i)] If $X_j=X(\theta_j)$ for a $2\pi$-periodic matrix function $X$ that is analytic and bounded by $B$ on
	$\mathcal S_\beta$, and $\sigma<\beta$, then $\norm{\{X_j\}}_\sigma\le KB$, where $K$ depends only on $\beta-\sigma$.
	\item[(ii)] If the products $X_jY_j$ are defined, then $\norm{\{X_jY_j\}}_\sigma\le\norm{\{X_j\}}_\sigma\norm{\{Y_j\}}_\sigma$.
	\item[(iii)] If $Y_j=X_{j+1}-X_j$ for $0\le j<2N$, then
	\begin{equation}\label{eq:summing}
		\norm{\{X_j-\hat X_0\}}_\sigma\le\frac N2\,\norm{\{Y_j\}}_\sigma .
	\end{equation}
	\item[(iv)] If $X_{2N-j}=X_j$ for $0<j<2N$, then the polynomial~\eqref{eq:evencheb} has norm at most
	$\norm{\{X_j\}}_\sigma$ for $\abs{\operatorname{Im}\theta}\le\sigma$.
\end{itemize}
\end{lemma}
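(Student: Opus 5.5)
The four properties follow from the definitions~\eqref{eq:dft}, \eqref{eq:alias} and~\eqref{eq:wnorm}, each by a short direct estimate; none needs more than the triangle inequality together with a geometric series or a convolution identity.

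\emph{Part (i).} By the aliasing formula~\eqref{eq:alias} and the coefficient bound~\eqref{eq:fourier},
\[
\norm{\hat X_l}\le\sum_{q\in\Z}Be^{-\beta\abs{l+2Nq}} .
\]
Multiplying by $e^{\sigma\abs l}$ and summing over $-N<l\le N$ gives a double sum over all $m=l+2Nq\in\Z$. Since $\abs l\le\abs m$ for the representative $l$ of $m$ with $-N<l\le N$, each term is at most $Be^{-(\beta-\sigma)\abs m}$. Hence
\[
\norm{\{X_j\}}_\sigma\le B\sum_{m\in\Z}e^{-(\beta-\sigma)\abs m}=B\coth\bigl(\tfrac{\beta-\sigma}2\bigr),
\]
and we take $K=\coth\bigl(\tfrac{\beta-\sigma}2\bigr)$.

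\emph{Part (ii).} Substituting the inversion formula for $X_j$ and $Y_j$ shows that the discrete coefficients of the product are the cyclic convolution
\[
\widehat{(XY)}_l=\sum_{a+b\equiv l}\hat X_a\hat Y_b ,
\]
with indices taken mod $2N$ and reduced to $(-N,N]$. The key inequality is $\abs l\le\abs a+\abs b$ for the reduced representatives: if $l\equiv a+b$ and $l$ is reduced, then $\abs l\le\abs{a+b}$, since the reduced representative has the least absolute value in its class (the tie at $\pm N$ causes no harm). Submultiplicativity of the weight $e^{\sigma\abs{\cdot}}$ and of the spectral norm then give~(ii).

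\emph{Part (iii).} From~\eqref{eq:dft}, $\hat Y_l=(e^{\I l\pi/N}-1)\hat X_l$. Hence $\hat Y_0=0$, and for $l\ne0$, with $\abs l\le N$,
\[
\abs{e^{\I l\pi/N}-1}=2\sin\frac{\pi\abs l}{2N}\ge\frac{2\abs l}N\ge\frac2N ,
\]
by concavity of $\sin$ on $[0,\pi/2]$. The sequence $X_j-\hat X_0$ has coefficients $\hat X_l$ for $l\ne0$ and $0$ for $l=0$, so $\norm{\hat X_l}\le\frac N2\norm{\hat Y_l}$ termwise, which gives~\eqref{eq:summing}.

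\emph{Part (iv).} For $\abs{\operatorname{Im}\theta}\le\sigma$ we have $\abs{\cos l\theta}\le\cosh(l\operatorname{Im}\theta)\le e^{\sigma l}$. Using $\hat X_{-l}=\hat X_l$ for $\abs l<N$, the term $2\norm{\hat X_l}e^{\sigma l}$ equals the contributions of $l$ and $-l$ in~\eqref{eq:wnorm}, and the mode $N$ is counted once. The triangle inequality applied to~\eqref{eq:evencheb} then gives the bound.

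The only point that needs care is the index bookkeeping in (ii), namely the reduction modulo $2N$ and the inequality $\abs l\le\abs a+\abs b$; everything else is direct estimation.
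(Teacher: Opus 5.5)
Your proposal is correct and follows the paper's proof essentially step by step. The paper uses the same four arguments: the aliasing sum with $\abs l\le\abs m$, giving $K=\coth\frac{\beta-\sigma}2$, in (i); the cyclic convolution with $\abs l\le\abs{a+b}\le\abs a+\abs b$ in (ii); the multiplier $e^{\I\pi l/N}-1$ bounded below by $2/N$ through $\sin x\ge 2x/\pi$ in (iii); and $\abs{\cos l\theta}\le e^{\sigma\abs l}$ together with $\hat X_{-l}=\hat X_l$ in (iv).
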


\begin{proof}
For~(i), by the aliasing formula~\eqref{eq:alias}, since $\abs l\le\abs{l+2Nq}$ for $-N<l\le N$, and by the bound
in~\eqref{eq:fourier},
\begin{equation}
	\norm{\{X_j\}}_\sigma\le\sum_{m\in\Z}\norm{C_m}\,e^{\sigma\abs m}\le B\sum_{m\in\Z}e^{-(\beta-\sigma)\abs m}
	=B\coth\frac{\beta-\sigma}2 ,
\end{equation}
which is~(i) with $K:=\coth\frac{\beta-\sigma}2$.

For~(ii), inserting the inversion formula in~\eqref{eq:dft} for $X_j$ and $Y_j$ into the formula for the
coefficients shows that the $l$th coefficient of $\{X_jY_j\}$ is the sum of $\hat X_p\hat Y_q$ over the pairs
$-N<p,q\le N$ with $p+q\equiv l\pmod{2N}$. For each such pair, $\abs l\le\abs{p+q}\le\abs p+\abs q$, since $\abs l$ is
the distance from $p+q$ to the nearest multiple of $2N$. Hence
\begin{equation}
	\norm{\{X_jY_j\}}_\sigma\le\sum_{-N<l\le N}\ \sum_{p+q\equiv l}\norm{\hat X_p}e^{\sigma\abs p}\,\norm{\hat Y_q}
	e^{\sigma\abs q}=\norm{\{X_j\}}_\sigma\norm{\{Y_j\}}_\sigma ,
\end{equation}
since the double sum runs over all pairs $-N<p,q\le N$, each exactly once.

For~(iii), \eqref{eq:dft} shows that the coefficients of $\{X_{j+1}\}$ are those of $\{X_j\}$ multiplied by
$e^{\I\pi l/N}$. Hence $\hat Y_l=\bigl(e^{\I\pi l/N}-1\bigr)\hat X_l$ for $-N<l\le N$. For $l\ne0$ the multiplier satisfies $\abs{e^{\I\pi l/N}-1}=2\sin\frac{\pi\abs l}{2N}\ge\frac2N$, since
$\sin x\ge2x/\pi$ for $0\le x\le\pi/2$. Hence $\norm{\hat X_l}\le\frac N2\norm{\hat Y_l}$ for $l\ne0$. By~\eqref{eq:dft},
the sequence $\{X_j-\hat X_0\}$ has the coefficients $\hat X_l$ for $l\ne0$ and $0$ for $l=0$. Summing over $l\ne0$
gives~(iii).

For~(iv), $\abs{\cos l\theta}\le e^{\sigma\abs l}$ for $\abs{\operatorname{Im}\theta}\le\sigma$, and $\hat X_{-l}=\hat X_l$ for $\abs l<N$,
so the norm of~\eqref{eq:evencheb} is at most $\norm{\hat X_0}+\sum_{0<\abs l<N}\norm{\hat X_l}\,e^{\sigma\abs l}
+\norm{\hat X_N}\,e^{\sigma N}=\norm{\{X_j\}}_\sigma$.
\end{proof}

\section{Proofs of the main results}\label{sec:proofs}

This section proves Proposition~\ref{prop:second}, Theorem~\ref{thm:main} and Corollaries~\ref{cor:modified}
and~\ref{cor:recover}. All the proofs work with the gauges $U_j=W_j^*V_j$ of~\eqref{eq:gaugej}. Since $V_j=W_jU_j$,
the error of a computed frame is the deviation of its gauge from $I$, $\norm{V_j-W_j}=\norm{U_j-I}$. The gauges
satisfy the recursion $U_{j+1}=M_jU_j$ with $U_0=I$, whose unitary factors $M_j$ are determined by the
parallel-transported frame at two consecutive nodes and do not involve the computed frames (Lemma~\ref{lem:factor}).
Each gauge is therefore a product of such factors, and the proofs bound these products.

Proposition~\ref{prop:second} needs bounds only at the nodes, and Section~\ref{sec:onestep} proves it directly. Each
factor is within $O(\delta^3)$ of $I$ for a step $\delta$ (Lemma~\ref{lem:step}), and since the factors are unitary,
their deviations from $I$ add up to $O(h^2)$.

As explained before Theorem~\ref{thm:main}, the theorem does not concern a single function. For each $N$, the
discrete parallel transport produces samples of a different function: the gauges, and with them the frame $V$, change
with $N$. Interpolation therefore converges at the rate $\rho'^{-N}$ only if these functions are analytic and bounded
on a fixed ellipse, uniformly in $N$. The gauges, however, are defined only at the nodes, through the recursion, and small values at the nodes do not bound
their interpolant on an ellipse: the values $N^{-2}(-1)^j$ are small, but their interpolant
$\pm N^{-2}T_N$ grows exponentially in $N$ on $\mathcal E_{e^\sigma}$, $\sigma>0$, as $N^{-2}e^{\sigma N}$. The proof therefore carries analyticity
through the recursion itself, in three steps.
\begin{itemize}
	\item[1.] Section~\ref{sec:strip} passes to the circle by the substitution $k=-\cos\theta$. Since the step is
	reversible (Lemma~\ref{lem:rev}), the transport out and back along the path retraces itself, so the gauges form an even sequence that
	satisfies the recursion around the whole circle (Lemma~\ref{lem:retrace}). The one-step discrepancies $M_j-I$ are
	the values at the points $\theta_j$ of a function that is analytic on a fixed strip and of size $O(h^3)$ there
	(Proposition~\ref{prop:strip}).
	\item[2.] Section~\ref{sec:gauge} measures sequences by the weighted norm~\eqref{eq:wnorm}, which plays the role of
	analyticity on a strip: for the values $N^{-2}(-1)^j$ above, it equals $N^{-2}e^{\sigma N}$, the growth of their
	interpolant on $\mathcal E_{e^\sigma}$. The recursion preserves this norm at the cost of one factor $N$
	(Lemma~\ref{lem:running}). The accumulated discrepancies $U_j-I$ are therefore of size $O(h^2)$ in this norm, and the
	interpolant of the gauges is within $O(N^{-2})$ of $I$ on a fixed ellipse (Proposition~\ref{prop:ellipse}).
	\item[3.] The analytic polar factor of this interpolant is the gauge $U$ of Theorem~\ref{thm:main}. It gives the
	analytic frame $V=WU$, which is bounded on a fixed ellipse uniformly in $N$, and Lemma~\ref{lem:cheb} gives the
	spectral accuracy of its interpolant.
\end{itemize}
Section~\ref{sec:corproofs} derives the corollaries from Theorem~\ref{thm:main}: Corollary~\ref{cor:modified} by
differentiating $V=WU$ and a Cauchy estimate, and Corollary~\ref{cor:recover} by comparing the connection matrix of the
interpolant with that of $V$.

Throughout this section, $P$, $J$, $V_0$, $W$ and the nodes are as in Proposition~\ref{prop:second} and
Theorem~\ref{thm:main}, the frames $V_j$ are computed by the discrete parallel transport~\eqref{eq:dt}, and
$W_j=W(k_j)$. For Theorem~\ref{thm:main} and its corollaries, Lemma~\ref{lem:continue}, applied to the interval
$J\cap\mathcal E_\rho$, shows that $W$ extends analytically to $\mathcal E_\rho$ and satisfies~\eqref{eq:katoext} there,
since $\mathcal E_\rho$ is simply connected and symmetric about the real axis.

\subsection{The one-step factor and second-order convergence}\label{sec:onestep}

This section proves Proposition~\ref{prop:second}, on arbitrary nodes. The following lemma shows that each step of the
discrete parallel transport multiplies the gauge by a unitary matrix that is determined by the parallel-transported
frame at the two nodes, not by the computed frames. It reduces Proposition~\ref{prop:second}, and later
Theorem~\ref{thm:main}, to estimates of a single factor. For $n=1$, this
factorization is in the proof of Theorem~2.7 in~\cite{DPP13}.

\begin{lemma}[Factorization]\label{lem:factor}
Under the assumptions of Lemma~\ref{lem:dt}, for $k,k'\in J$ such that $W(k')^*W(k)$ is invertible, let
\begin{equation}\label{eq:M}
	M(k,k')=\polar\bigl(W(k')^*W(k)\bigr)\in\U(n).
\end{equation}
Then, for $0\le j<N$, the matrix $W_{j+1}^*W_j$ is invertible, and the gauges~\eqref{eq:gaugej} satisfy
\begin{equation}\label{eq:factor}
	U_{j+1}=M_jU_j, \qquad M_j:=M(k_j,k_{j+1}), \qquad U_0=I.
\end{equation}
\end{lemma}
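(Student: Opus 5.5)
The plan is to compute $U_{j+1}=W_{j+1}^*V_{j+1}$ directly from the SVD form~\eqref{eq:svd} of the step, using the parallel-transported frame $W_{j+1}$ as the frame $Q_{j+1}$ at $k_{j+1}$. Both $W_j$ and $V_j$ are frames at $k_j$: the first by the properties of~\eqref{eq:kato} recalled after Definition~\ref{def:pt}, the second by Lemma~\ref{lem:dt}(ii). Hence $V_j=W_jU_j$ with $U_j\in\U(n)$, as in~\eqref{eq:gaugej}. At $j=0$ we have $W_0=W(k_0)=V_0$, so $U_0=W_0^*V_0=I$.

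First I will check that $W_{j+1}^*W_j$ is invertible. Under the hypothesis $\norm{P(k_{j+1})-P(k_j)}<1$, the argument in the proof of Lemma~\ref{lem:dt}(i), applied to the frame $W_j$ in place of $V_j$, gives $\norm{P(k_{j+1})W_jx}\ge(1-\norm{P(k_{j+1})-P(k_j)})\norm x>0$ for $x\ne0$. Since $P(k_{j+1})W_j=W_{j+1}(W_{j+1}^*W_j)$ and $W_{j+1}$ has orthonormal columns, we get $\norm{W_{j+1}^*W_jx}=\norm{P(k_{j+1})W_jx}>0$. So the $n\times n$ matrix $W_{j+1}^*W_j$ is injective, hence invertible, and $M_j$ is well defined and unitary.

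Next comes the factorization itself. I write $P(k_{j+1})V_j=W_{j+1}W_{j+1}^*W_jU_j$. Property~\ref{P} applied twice then gives
\[
	V_{j+1}=\polar\bigl(W_{j+1}(W_{j+1}^*W_j)U_j\bigr)=W_{j+1}\polar\bigl(W_{j+1}^*W_j\bigr)U_j=W_{j+1}M_jU_j .
\]
The first use of~\ref{P} pulls out $W_{j+1}$, which has orthonormal columns. The second pulls out $U_j$, which is unitary. Multiplying on the left by $W_{j+1}^*$ and using $W_{j+1}^*W_{j+1}=I$ yields $U_{j+1}=M_jU_j$, which is~\eqref{eq:factor}.

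There is no real obstacle in this argument. The only point needing care is the invertibility of $W_{j+1}^*W_j$, so that $M_j$ is defined. That follows from the same projector-distance estimate as in Lemma~\ref{lem:dt}, because $W_j$ is a frame at $k_j$ just as $V_j$ is.
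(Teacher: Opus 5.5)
Your proof is correct and follows the paper's argument. Both proofs write $P(k_{j+1})V_j=W_{j+1}(W_{j+1}^*W_j)U_j$, use property~\ref{P} twice to get $V_{j+1}=W_{j+1}M_jU_j$, and then multiply by $W_{j+1}^*$. The only difference is how you show that $W_{j+1}^*W_j$ is invertible: you rerun the estimate of Lemma~\ref{lem:dt} on the frame $W_j$, whereas the paper deduces it from the invertibility of $W_{j+1}^*V_j=(W_{j+1}^*W_j)U_j$ given by Lemma~\ref{lem:dt}(i).
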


\begin{proof}
The matrix $M(k,k')$ is unitary, since it is an $n\times n$ matrix with orthonormal columns.

Fix $0\le j<N$. By Lemma~\ref{lem:dt}(i), the step~\eqref{eq:dt} is defined, so the overlap $W_{j+1}^*V_j$ is
invertible (see before~\eqref{eq:svd}). The SVD form~\eqref{eq:svd} with the frame $Q_{j+1}=W_{j+1}$ at $k_{j+1}$ then gives
\begin{equation}
	V_{j+1}=W_{j+1}\polar\bigl(W_{j+1}^*V_j\bigr).
\end{equation}
Since $V_j=W_jU_j$ by~\eqref{eq:gaugej}, the overlap is $W_{j+1}^*V_j=(W_{j+1}^*W_j)\,U_j$. Since $U_j$ is
unitary, $W_{j+1}^*W_j=(W_{j+1}^*V_j)\,U_j^*$ is invertible, and the second part of~\ref{P} gives
\begin{equation}
	\polar\bigl(W_{j+1}^*V_j\bigr)=\polar\bigl((W_{j+1}^*W_j)\,U_j\bigr)=M_jU_j .
\end{equation}
Hence $V_{j+1}=W_{j+1}M_jU_j$. Multiplying on the left by $W_{j+1}^*$, which satisfies $W_{j+1}^*W_{j+1}=I$, gives
$U_{j+1}=W_{j+1}^*V_{j+1}=M_jU_j$. Finally, $U_0=W_0^*V_0=V_0^*V_0=I$, since $W_0=V_0$ by~\eqref{eq:kato}.
\end{proof}

The next lemma shows that one factor $M(k,k+\delta)$ is the identity up to $O(\delta^3)$. The proofs use only this
bound; the leading term shows that the order is attained whenever $W^*[P',P'']W\ne0$. For $n=1$, the bound is in the
proof of Theorem~2.7 in~\cite{DPP13}, and for a group of eigenvectors of a Hermitian family, the leading term can be
read off from the third-order terms of equations~(38) and~(53) in~\cite{LRS22}.

\begin{lemma}[One step]\label{lem:step}
Let $M$ be as in~\eqref{eq:M}, and suppose that $J$ contains $[-1,1]$. Then we have
\begin{equation}\label{eq:onestep}
	M(k,k+\delta)=I+\frac{\delta^3}{12}\,W(k)^*\bigl[P'(k),P''(k)\bigr]W(k)+O(\delta^4) \qquad\text{as }\delta\to0,
\end{equation}
where $[A,B]=AB-BA$ is the commutator, and the constant in the $O(\delta^4)$ term does not depend on $k\in[-1,1]$.
\end{lemma}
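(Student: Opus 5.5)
The plan is a direct Taylor expansion of the overlap $A(\delta):=W(k+\delta)^*W(k)$ in $\delta$, followed by an expansion of its polar factor $A(A^*A)^{-1/2}$ to third order. All derivatives are at $k$, and we write $W,W',W'',W'''$ for $W(k)$ and its derivatives. First I simplify the coefficients of $A(\delta)$ using the parallel-transport identities. By~\eqref{eq:katoprop}, $W^*W'=0$, hence $W'^*W=0$. Differentiating $W^*W'=0$ once gives $W'^*W'+W^*W''=0$, so with the Hermitian matrix $S:=W'^*W'$ we get $W''^*W=-S$. Differentiating twice gives $W''^*W'+2W'^*W''+W^*W'''=0$, so $B:=W'''^*W=-(W'^*W''+2W''^*W')$. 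Hence
\begin{equation*}
	A(\delta)=I-\tfrac{\delta^2}{2}S+\tfrac{\delta^3}{6}B+O(\delta^4).
\end{equation*}

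Next I expand the polar factor. We have $A^*A=I-\delta^2S+\frac{\delta^3}{6}(B+B^*)+O(\delta^4)$, and the binomial series gives $(A^*A)^{-1/2}=I+\frac{\delta^2}{2}S-\frac{\delta^3}{12}(B+B^*)+O(\delta^4)$. Multiplying, the $\delta^2$ terms cancel and
\begin{equation*}
	M(k,k+\delta)=I+\tfrac{\delta^3}{12}(B-B^*)+O(\delta^4),
\end{equation*}
with $B-B^*=W'^*W''-W''^*W'$. To express this through $P$, I use $W'=P'W$ and $W''=P''W+P'W'=(P''+P'^2)W$. Then $W'^*W''=W^*(P'P''+P'^3)W$ and $W''^*W'=W^*(P''P'+P'^3)W$. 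The $P'^3$ terms cancel and leave $W^*[P',P'']W$, which is~\eqref{eq:onestep}.

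The only point needing care is the uniformity of the $O(\delta^4)$ constant in $k\in[-1,1]$. Since $J\supset[-1,1]$ is open, $W$ is analytic on a complex neighborhood of $[-1,1]$ (Lemma~\ref{lem:continue}), so $(k,\delta)\mapsto A(\delta)$ is analytic in two variables on a product neighborhood of $[-1,1]\times\{0\}$. Here $A$ is continued as $W^\sharp(k+\delta)W(k)$, in the sense of Lemma~\ref{lem:polar}. By continuity, $\norm{A-I}\le\frac14$ for $\abs\delta\le\delta_0$ uniformly in $k$. Lemma~\ref{lem:polar}, with $d=2$, then shows that $M(k,k+\delta)$ continues analytically in $(k,\delta)$ and is bounded there. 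Cauchy estimates on a fixed polydisk around each $(k,0)$, together with compactness of $[-1,1]$, bound the fourth $\delta$-derivative uniformly, and this gives the uniform remainder. I expect this uniformity bookkeeping to be the main, though mild, obstacle; the algebra itself is routine.
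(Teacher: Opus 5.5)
Your proof is correct and follows the paper's argument. It uses the same Taylor expansion of the overlap via $W^*W'=0$ and its derivatives, and the same cancellation of the Hermitian $\delta^2$ term in the polar factor, which the paper phrases as $\polar(I+\Delta)=I+\tfrac12(\Delta-\Delta^*)+O(\norm\Delta^2)$. It ends with the same reduction $W'^*W''-W''^*W'=W^*[P',P'']W$.

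The only difference is how you make the remainder uniform in $k$. The paper gets this more cheaply from bounds on $W$ and its first four derivatives on a compact neighborhood of $[-1,1]$ in $J$, together with uniform convergence of the binomial series for $\norm\Delta\le\frac14$. Your two-variable continuation with Lemma~\ref{lem:polar} and Cauchy estimates is also valid; it is essentially the device the paper uses later in Proposition~\ref{prop:strip}.
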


\begin{proof}
Fix $k\in[-1,1]$, and write $W$, $W'$, \dots\ for $W(k)$ and its derivatives. The constants in the $O$-terms below
depend only on bounds for $W$ and its first four derivatives on a compact neighborhood of $[-1,1]$ in $J$, so they are
uniform in $k$. By~\eqref{eq:katoprop}, $W^*W=I$ and $W^*W'=0$, and differentiating the latter gives
$W^*W''=-W'^*W'$ and $W^*W'''=-W''^*W'-2W'^*W''$. Taylor's theorem therefore gives, for $\Delta=W(k+\delta)^*W(k)-I$,
\begin{equation}\label{eq:Delta}
	\Delta=-\frac{\delta^2}2\,W'^*W'-\frac{\delta^3}6\bigl(W'^*W''+2W''^*W'\bigr)+O(\delta^4).
\end{equation}
In particular, $\Delta=O(\delta^2)$, so there is $\delta_*>0$, independent of $k$, such that $k+\delta\in J$ and
$\norm\Delta\le\frac14$ for $\abs\delta\le\delta_*$. Then $M(k,k+\delta)=\polar(I+\Delta)$ is defined by~\eqref{eq:M}.

Since $(I+\Delta)^*(I+\Delta)=I+\Delta+\Delta^*+\Delta^*\Delta$, the first two terms of the series~\eqref{eq:binom},
which converges for $\norm\Delta\le\frac14$ (Lemma~\ref{lem:polar}), give
$\bigl((I+\Delta)^*(I+\Delta)\bigr)^{-1/2}=I-\frac12(\Delta+\Delta^*)+O(\norm\Delta^2)$. Multiplying on the left by
$I+\Delta$ gives
\begin{equation}\label{eq:polarlin}
	\polar(I+\Delta)=I+\tfrac12(\Delta-\Delta^*)+O(\norm\Delta^2),
\end{equation}
with remainder $O(\delta^4)$. The term of order two in~\eqref{eq:Delta} is Hermitian and drops out of $\Delta-\Delta^*$,
so the term of order three gives
\begin{equation}
	M(k,k+\delta)-I=\frac{\delta^3}{12}\bigl(W'^*W''-W''^*W'\bigr)+O(\delta^4).
\end{equation}
Finally, $W'=P'W$ by~\eqref{eq:kato} gives $W''=(P''+P'^2)W$, and $P'$ and $P''$ are Hermitian, so
\begin{equation}
	W'^*W''-W''^*W'=W^*\bigl(P'(P''+P'^2)-(P''+P'^2)P'\bigr)W=W^*[P',P'']W ,
\end{equation}
which gives~\eqref{eq:onestep}.
\end{proof}

\begin{proof}[Proof of Proposition~\ref{prop:second}]
Here $k_0=-1$, and $V_0$ is a frame at $-1$. We first choose the constants. By~\eqref{eq:onestep}, there are
$\delta_0>0$ and $L$ such that $\norm{M(k,k+\delta)-I}\le L\delta^3$ for $k\in[-1,1]$ and $0<\delta\le\delta_0$.
These constants do not depend on $V_0$. Any other frame at $-1$ is $V_0G$ with $G\in\U(n)$ (see after
Definition~\ref{def:frame}), and its parallel-transported frame is $WG$. By~\ref{P}, the factor~\eqref{eq:M} for $WG$
is $G^*M(k,k')G$, which has the same distance from $I$.

We next set $h_0=\min\bigl(\delta_0,1/(1+\norm{P'}_{[-1,1]})\bigr)$ and show that the discrete parallel transport is
defined for nodes with $h\le h_0$. For $0\le i<N$, the mean value inequality gives
$\norm{P(k_{i+1})-P(k_i)}\le h\norm{P'}_{[-1,1]}<1$, since $h\le h_0\le1/(1+\norm{P'}_{[-1,1]})$. Hence the assumptions of Lemma~\ref{lem:dt} hold, the discrete parallel transport is defined, and
Lemma~\ref{lem:factor} applies.

It remains to prove~\eqref{eq:secondorder}. Since $W_j$ has orthonormal columns, $\norm{W_jX}=\norm X$ for
$X\in\C^{n\times n}$, so by~\eqref{eq:gaugej}
\begin{equation}\label{eq:errgauge}
	\norm{V_j-W_j}=\norm{W_j(U_j-I)}=\norm{U_j-I}.
\end{equation}
For $A\in\C^{n\times n}$ and $B\in\U(n)$, $AB-I=(A-I)B+(B-I)$ and $\norm{(A-I)B}=\norm{A-I}$ give
$\norm{AB-I}\le\norm{A-I}+\norm{B-I}$. By Lemma~\ref{lem:factor}, $U_{i+1}=M_iU_i$ with $U_i$ unitary, so
$\norm{U_{i+1}-I}\le\norm{M_i-I}+\norm{U_i-I}$. Since $U_0=I$ by~\eqref{eq:factor}, it follows that
\begin{equation}
	\norm{U_j-I}\le\sum_{i=0}^{j-1}\norm{M_i-I}\le L\sum_{i=0}^{j-1}(k_{i+1}-k_i)^3
	\le Lh^2\sum_{i=0}^{N-1}(k_{i+1}-k_i)=2Lh^2,
\end{equation}
where the second inequality is the bound on $M$ with $k=k_i\in[-1,1]$ and $\delta=k_{i+1}-k_i\le h\le\delta_0$.
By~\eqref{eq:errgauge}, this is~\eqref{eq:secondorder} with $C:=2L$. Since $\delta_0$ and $L$ depend only on
$P$, so do $h_0$ and $C$.
\end{proof}

\subsection{Discrete parallel transport on the circle}\label{sec:strip}

In Theorem~\ref{thm:main}, the discrete parallel transport runs on the Chebyshev nodes~\eqref{eq:chebnodes}. As is
standard in approximation theory, we prove the theorem in Fourier space, through the substitution \mbox{$k=-\cos\theta$} of Section~\ref{sec:dfa}. We define
\begin{equation}\label{eq:ktheta}
	k(\theta)=-\cos\theta, \qquad h=\frac\pi N ,
\end{equation}
so that $h$ is the spacing of the points $\theta_j=jh$ of~\eqref{eq:thetaj}, and $k_j=k(\theta_j)$ for $0\le j\le N$.
We use this formula to define $k_j$ also for $N<j\le2N$. Since $\cos(2\pi-\theta)=\cos\theta$,
\begin{equation}\label{eq:retrace}
	k_{2N-j}=k_j, \qquad 0\le j\le2N ,
\end{equation}
so the substitution wraps the interval around the circle: as $j$ runs from $0$ to $2N$, the node $k_j$ runs from $-1$
to $1$ through the Chebyshev nodes and back. By~\eqref{eq:striptoellipse}, the map $\theta\mapsto k(\theta)$ takes the strip $\mathcal S_s$ onto $\mathcal E_{e^s}$ for every
$s>0$.

The passage from Chebyshev to Fourier (Section~\ref{sec:dfa}) extends functions on $[-1,1]$ evenly around the circle.
The following lemma shows that this extension is compatible with the recursion of the gauges: continued back along the
path over the nodes $k_{N+1},\dots,k_{2N}$, the discrete parallel transport retraces the computed frames, since the step
is reversible (Lemma~\ref{lem:rev}).

\begin{lemma}[Even extension of the gauges]\label{lem:retrace}
Suppose that $\norm{P(k_{j+1})-P(k_j)}<1$ for $0\le j<N$, and let $V_0,\dots,V_{2N}$ be the discrete parallel
transport~\eqref{eq:dt} on the nodes $k_0,\dots,k_{2N}$. Then $V_{2N-j}=V_j$ for $0\le j\le2N$, and the gauges
$U_j=W_j^*V_j$, with $W_j=W(k_j)$, satisfy
\begin{equation}\label{eq:evenrec}
	U_{2N-j}=U_j, \quad 0\le j\le2N, \qquad U_{j+1}=M_jU_j, \quad 0\le j<2N,
\end{equation}
with $U_0=I$ and $M_j=M(k_j,k_{j+1})$ as in~\eqref{eq:factor}.
\end{lemma}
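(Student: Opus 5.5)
We will prove $V_{2N-j}=V_j$ by downward induction on the forward half, using reversibility (Lemma~\ref{lem:rev}), and then derive the gauge identities from it together with Lemma~\ref{lem:factor}.

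First, the nodes $k_{N+1},\dots,k_{2N}$ are by~\eqref{eq:retrace} the nodes $k_{N-1},\dots,k_0$ in reverse order. Hence the consecutive differences on the second half are the same matrices as on the first half: $\norm{P(k_{j+1})-P(k_j)}=\norm{P(k_{2N-j-1})-P(k_{2N-j})}<1$ for $N\le j<2N$. So the hypothesis of Lemma~\ref{lem:dt} holds on all $2N$ steps. The transport $V_0,\dots,V_{2N}$ is therefore defined, and every $V_j$ is a frame at $k_j$.

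Next we show $V_{2N-j}=V_j$ by induction on $j=N,N-1,\dots,0$. The case $j=N$ is trivial. Suppose $V_{2N-j}=V_j$ for some $1\le j\le N$. By~\eqref{eq:dt} and $k_{2N-j+1}=k_{j-1}$,
\begin{equation*}
	V_{2N-j+1}=\polar\bigl(P(k_{j-1})V_{2N-j}\bigr)=\polar\bigl(P(k_{j-1})V_j\bigr).
\end{equation*}
Lemma~\ref{lem:rev}, applied to the forward transport at index $j-1<N$, identifies the right-hand side as $V_{j-1}$. This gives $V_{2N-(j-1)}=V_{j-1}$, which completes the induction.

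The gauge identities now follow. Since $W_{2N-j}=W(k_{2N-j})=W(k_j)=W_j$, we get $U_{2N-j}=W_j^*V_j=U_j$. Lemma~\ref{lem:factor} is stated for nodes where Lemma~\ref{lem:dt} applies, and its proof uses only that one step: the SVD form with $Q_{j+1}=W_{j+1}$ and property~\ref{P}. Every step $0\le j<2N$ satisfies that hypothesis by the first paragraph, so the same proof gives $U_{j+1}=M_jU_j$ with $M_j=M(k_j,k_{j+1})$ for $0\le j<2N$, and $U_0=I$.

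The only point needing care is that the nodes on the return path are not increasing. Lemmas~\ref{lem:dt}, \ref{lem:rev} and~\ref{lem:factor} never use monotonicity of the nodes, only the bound $\norm{P(k_{j+1})-P(k_j)}<1$, so this causes no difficulty. In particular $M_j$ for $j\ge N$ equals $M(k_{2N-j},k_{2N-j-1})$, the factor of a backward step, which is consistent with the even symmetry of the gauges.
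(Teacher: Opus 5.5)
Your proof is correct and follows essentially the same route as the paper. You check that the return steps reuse the forward differences of $P$, induct outward from the turning point $V_N$ using reversibility (the paper indexes this as $V_{N+i}=V_{N-i}$), and apply Lemma~\ref{lem:factor} to all $2N$ steps.
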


\begin{proof}
We first show that the transport is defined. By~\eqref{eq:retrace}, for $N\le j<2N$ the nodes $k_j$ and $k_{j+1}$ are
$k_{i+1}$ and $k_i$ with $i=2N-j-1\in\{0,\dots,N-1\}$, so $\norm{P(k_{j+1})-P(k_j)}<1$ for $0\le j<2N$. Hence
Lemma~\ref{lem:dt}, with $2N$ in place of $N$, shows that the transport is defined and that every $V_j$ is a frame at
$k_j$. For $0\le j\le N$, the $V_j$ are the frames of Theorem~\ref{thm:main}, since the nodes and the recursion are the
same.

We next show that $V_{N+i}=V_{N-i}$ for $0\le i\le N$, by induction on $i$. For
$i=0$, both sides are $V_N$. Suppose that $V_{N+i}=V_{N-i}$ for some $0\le i<N$. Since $k_{N+i+1}=k_{N-i-1}$
by~\eqref{eq:retrace},
\begin{equation}
	V_{N+i+1}=\polar\bigl(P(k_{N+i+1})\,V_{N+i}\bigr)=\polar\bigl(P(k_{N-i-1})\,V_{N-i}\bigr)=V_{N-i-1},
\end{equation}
where the last equality is~\eqref{eq:rev} with $j=N-i-1$.

It remains to prove~\eqref{eq:evenrec}. For $0\le j\le2N$, \eqref{eq:retrace} and $V_{2N-j}=V_j$ give
\begin{equation*}
	U_{2N-j}=W(k_{2N-j})^*V_{2N-j}=W(k_j)^*V_j=U_j .
\end{equation*}
In particular $U_{2N}=U_0$, consistent with the indexing modulo $2N$ of Section~\ref{sec:dfa}.
Lemma~\ref{lem:factor} on the nodes $k_0,\dots,k_{2N}$ gives $U_{j+1}=M_jU_j$ for $0\le j<2N$, with $U_0=I$.
\end{proof}

With the one-step factor $M$ in~\eqref{eq:M}, we define the one-step discrepancy as a function of the position
$\theta$ and the step $\eta$,
\begin{equation}\label{eq:D}
	D(\theta,\eta)=M\bigl(k(\theta),k(\theta+\eta)\bigr)-I, \qquad \theta,\eta\in\R,
\end{equation}
whenever $W(k(\theta+\eta))^*W(k(\theta))$ is invertible. Since $k(\theta_j)=k_j$ and $\theta_j+h=\theta_{j+1}$, its
values $D(\theta_j,h)=M_j-I$, $0\le j<2N$, are the deviations from $I$ of the factors of the gauge
recursion~\eqref{eq:evenrec}.

The following proposition shows that, for large $N$, $D(\cdot,h)$ extends to a $2\pi$-periodic function that is
analytic on a fixed strip and of size $O(h^3)$ there. The proof carries the bound of Lemma~\ref{lem:step} from the real
axis into the strip, by treating the step as a complex variable.

\begin{proposition}[The one-step discrepancy in a strip]\label{prop:strip}
Let $P$ be analytic on $\mathcal E_\rho$, as in Theorem~\ref{thm:main}, and let $0<\beta'<\log\rho$. There are $N_1$
and $L$ such that for every $N\ge N_1$ the function $D(\cdot,h)$, with $D$ as in~\eqref{eq:D}, is defined on $\R$
and extends to a $2\pi$-periodic analytic function on $\mathcal S_{\beta'}$ with
\begin{equation}\label{eq:Dbound}
	\norm{D(\theta,h)}\le Lh^3, \qquad \theta\in\mathcal S_{\beta'}.
\end{equation}
\end{proposition}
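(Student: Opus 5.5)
We will treat the step $\eta$ as a second complex variable and apply the Schwarz lemma (Lemma~\ref{lem:schwarz}) in $\eta$, uniformly in $\theta$.

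First we continue $D$ analytically. By Lemma~\ref{lem:continue}, $W$ is analytic on $\mathcal E_\rho$ with $W^\sharp W=I$ and $W^\sharp W'=0$. Fix $\beta'<\beta''<\log\rho$. The map $k(\theta)=-\cos\theta$ sends $\mathcal S_{\beta''}$ into $\mathcal E_{e^{\beta''}}$, whose closure lies in $\mathcal E_\rho$, so $W\circ k$ is bounded there. Set
\[
  X(\theta,\eta)=W\bigl(k(\theta+\eta)\bigr)^\sharp\,W\bigl(k(\theta)\bigr),
\]
which is analytic in $(\theta,\eta)$ on $\mathcal S_{\beta'}\times\{\abs\eta<r\}$ for $r=\beta''-\beta'$, and $2\pi$-periodic in $\theta$. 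The set $\mathcal D=\mathcal S_{\beta'}\times\{\abs\eta<r\}\subset\C^2$ is symmetric about $\R^2$. At real points $X=W(k(\theta+\eta))^*W(k(\theta))$. We have $X(\theta,0)=I$. Moreover
\[
  X-I=\bigl(W(k(\theta+\eta))-W(k(\theta))\bigr)^\sharp W(k(\theta))
\]
is bounded by a Cauchy estimate by $c\abs\eta$, uniformly in $\theta$. Shrinking $r$ therefore gives $\norm{X-I}\le\frac14$ on $\mathcal D$. Lemma~\ref{lem:polar} then shows that $\polar(X)$ continues analytically to $\mathcal D$ as $X(X^\sharp X)^{-1/2}$, and that
\[
  \norm{\polar(X)-I}\le1.
\]
This continuation defines $D(\theta,\eta)$ on $\mathcal D$, agrees with~\eqref{eq:D} on real points, and is $2\pi$-periodic in $\theta$.

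Next we establish the vanishing order in $\eta$. For real $\theta$, Lemma~\ref{lem:step} applied with $\delta=k(\theta+\eta)-k(\theta)=O(\eta)$ gives $D(\theta,\eta)=O(\eta^3)$. One point needs care: Lemma~\ref{lem:step} is stated for forward steps from $k\in[-1,1]$, whereas $\delta$ may be negative here. Its Taylor proof does not use the sign of $\delta$, so it covers both signs. Hence the Taylor coefficients $\partial_\eta^q D(\theta,0)$ vanish for $q=0,1,2$ and all real $\theta$. These coefficients are analytic functions of $\theta$, so by the identity theorem they vanish on all of $\mathcal S_{\beta'}$. Thus, for each fixed $\theta\in\mathcal S_{\beta'}$, the map $\eta\mapsto D(\theta,\eta)$ is analytic on $\abs\eta<r$, bounded by $1$, and $O(\eta^3)$. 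Lemma~\ref{lem:schwarz} with $q=3$ gives
\[
  \norm{D(\theta,\eta)}\le\bigl(\abs\eta/r\bigr)^3 .
\]

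Finally we set $\eta=h=\pi/N$ with $N\ge N_1$, chosen so that $h<r$. This gives~\eqref{eq:Dbound} with $L=r^{-3}$. On $\R$, the matrix $X(\theta,h)$ is within $\frac14$ of $I$, so it is invertible and $D(\cdot,h)$ is defined there as in~\eqref{eq:D}.

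The main obstacle is the uniform bound $\norm{X-I}\le\frac14$ in the complex variables. It is needed in order to invoke Lemma~\ref{lem:polar}, and the continued polar factor is what supplies the uniform bound $1$ required by the Schwarz lemma. The Cauchy-estimate argument on a slightly larger strip should handle it.
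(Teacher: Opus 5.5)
Your proposal is correct and follows the paper's proof essentially step for step: you continue the overlap with $W^\sharp$ in the two complex variables $(\theta,\eta)$, make it $\frac14$-close to $I$ for small $\abs\eta$ so that Lemma~\ref{lem:polar} yields a continuation of $D$ bounded by $1$, transfer the third-order vanishing in $\eta$ from real $\theta$ to the strip by the identity theorem, and conclude with the Schwarz lemma for $q=3$ at $\eta=h$. The only difference is cosmetic: you obtain $\norm{X-I}\le\frac14$ from a Cauchy estimate on a slightly larger strip, while the paper applies Lemma~\ref{lem:schwarz} with $q=1$ to $\Delta/B$, which gives the same kind of bound.
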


\begin{proof}
We treat the step $\eta$ as a second complex variable. We show that, for some $\eta_1>0$ and the disk
$\mathbb D=\{\eta\in\C:\ \abs\eta<\eta_1\}$, $D$ extends to an analytic function on $\mathcal S_{\beta'}\times\mathbb D$
with $\norm D\le1$, and that $D(\theta,\eta)=O(\eta^3)$ as $\eta\to0$ for every $\theta\in\mathcal S_{\beta'}$. The
Schwarz lemma in $\eta$ then gives $\norm{D(\theta,\eta)}\le(\abs\eta/\eta_1)^3$, and setting $\eta=h$ proves the
proposition.

We first extend $D$. We fix $\eta_0>0$ with $s=\beta'+\eta_0<\log\rho$, so that $\theta+\eta\in\mathcal S_s$ for
$\theta\in\mathcal S_{\beta'}$ and $\abs\eta<\eta_0$, and we continue the overlap $W(k(\theta+\eta))^*W(k(\theta))$ off
the real axis, with $W^\sharp$ in place of $W^*$:
\begin{equation}
	\Delta(\theta,\eta)=W^\sharp\bigl(k(\theta+\eta)\bigr)\,W\bigl(k(\theta)\bigr)-I .
\end{equation}
Since $\theta\mapsto k(\theta)$ takes $\mathcal S_s$ onto $\mathcal E_{e^s}$ by~\eqref{eq:striptoellipse}, whose closure
lies in $\mathcal E_\rho$, where $W$ and $W^\sharp$ are analytic, $\Delta$ is analytic in $(\theta,\eta)$,
$2\pi$-periodic in $\theta$, and bounded by a constant $B>0$. By~\eqref{eq:katoext}, $\Delta(\theta,0)=0$, so
Lemma~\ref{lem:schwarz} applied to $\Delta(\theta,\cdot)/B$ with $r=\eta_0$ and $q=1$ gives
$\norm{\Delta(\theta,\eta)}\le B\abs\eta/\eta_0$. With $\eta_1=\min(\eta_0,\eta_0/(4B))$, therefore,
$\norm\Delta\le\frac14$ on $\mathcal S_{\beta'}\times\mathbb D$. This set is symmetric about $\R^2$, so
Lemma~\ref{lem:polar} with $X=I+\Delta$ and $\tau=\frac14$ shows that
\begin{equation}
	(I+\Delta)\bigl((I+\Delta)^\sharp(I+\Delta)\bigr)^{-1/2}-I
\end{equation}
is analytic on $\mathcal S_{\beta'}\times\mathbb D$, $2\pi$-periodic in $\theta$, and of norm at most $1$. Since
$W^\sharp=W^*$ on the real axis, Lemma~\ref{lem:polar} shows that this function equals $D$ for real $\theta$ and $\eta$,
so it is the analytic continuation of $D$ to $\mathcal S_{\beta'}\times\mathbb D$, which we denote again by $D$.

We next show that $D$ has a zero of order three in $\eta$. Fix a real $\theta$. For real $\eta$,
$k(\theta+\eta)=k(\theta)+\delta$ with $\abs\delta=\abs{\cos\theta-\cos(\theta+\eta)}\le\abs\eta$, so
$D(\theta,\eta)=O(\eta^3)$ by Lemma~\ref{lem:step}. The Taylor coefficients of $D(\theta,\cdot)$ at $\eta=0$ are its
derivatives along the real axis, so those of orders $0$, $1$ and $2$ vanish. As functions of
$\theta\in\mathcal S_{\beta'}$, these coefficients are analytic and vanish for real $\theta$, so they vanish on
$\mathcal S_{\beta'}$ by the identity theorem. Hence Lemma~\ref{lem:schwarz} applied to $D(\theta,\cdot)$ with $r=\eta_1$ and $q=3$ gives
\begin{equation}\label{eq:Dschwarz}
	\norm{D(\theta,\eta)}\le\Bigl(\frac{\abs\eta}{\eta_1}\Bigr)^3, \qquad (\theta,\eta)\in\mathcal S_{\beta'}\times\mathbb D.
\end{equation}
For $N\ge N_1$, with $N_1>\pi/\eta_1$, the real step $h=\pi/N$ lies in $\mathbb D$, so $D(\cdot,h)$ is defined on $\R$,
and its extension is $2\pi$-periodic and analytic on $\mathcal S_{\beta'}$. With $\eta=h$, \eqref{eq:Dschwarz}
is~\eqref{eq:Dbound} with $L=\eta_1^{-3}$.
\end{proof}


\subsection{Proof of Theorem~\ref{thm:main}}\label{sec:gauge}

This section passes from the one-step discrepancies of Section~\ref{sec:strip} to the gauges~\eqref{eq:gaugej}, which
they build up through the recursion~\eqref{eq:evenrec}, and completes the proof of Theorem~\ref{thm:main}. By
Lemma~\ref{lem:retrace}, the gauges extend to an even sequence $U_0,\dots,U_{2N-1}$, indexed modulo $2N$, and we denote
\begin{equation}\label{eq:Rj}
	U_j=I+R_j, \qquad 0\le j<2N.
\end{equation}
Since $\norm{V_j-W_j}=\norm{R_j}$ by~\eqref{eq:errgauge}, we call
$R_j$ the accumulated discrepancy. By~\eqref{eq:evenrec} and~\eqref{eq:D},
$U_{j+1}=(I+D(\theta_j,h))U_j$, that is,
\begin{equation}\label{eq:discrep}
	R_{j+1}-R_j=D(\theta_j,h)\,(I+R_j), \qquad 0\le j<2N, \qquad R_0=0 .
\end{equation}
The following lemma shows that the weighted norm survives a recursion of this form, at the cost of a factor $2N$.

\begin{lemma}[Running products]\label{lem:running}
Let $\sigma\ge0$, and let $\{X_j\}$ and $\{Y_j\}$ be sequences of $2N$ matrices in $\C^{n\times n}$, as
in~\eqref{eq:dft}, with $X_0=0$ and
\begin{equation}\label{eq:runrec}
	X_{j+1}-X_j=Y_j\,(I+X_j), \qquad 0\le j<2N ,
\end{equation}
where $X_{2N}=X_0$, so that the recursion closes up around the circle at $j=2N-1$. If
$N\norm{\{Y_j\}}_\sigma\le\frac12$, then
\begin{equation}\label{eq:running}
	\norm{\{X_j\}}_\sigma\le2N\norm{\{Y_j\}}_\sigma .
\end{equation}
\end{lemma}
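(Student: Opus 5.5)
The approach is to solve the recursion~\eqref{eq:runrec} as a linear fixed-point problem in the weighted norm. Summing the difference relation gives a discrete antiderivative, and Lemma~\ref{lem:dft}(iii) inverts the difference operator at a cost of $N/2$. Concretely, set $Z_j:=Y_j(I+X_j)$, so that $X_{j+1}-X_j=Z_j$ for $0\le j<2N$, with the closing condition $X_{2N}=X_0$ built in. Lemma~\ref{lem:dft}(iii) then gives
\begin{equation*}
	\norm{\{X_j-\hat X_0\}}_\sigma\le\frac N2\,\norm{\{Z_j\}}_\sigma .
\end{equation*}

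The next step controls the mean mode $\hat X_0$. Since $X_0=0$, the inversion formula~\eqref{eq:dft} at $j=0$ gives $0=\sum_l\hat X_l$. Hence $\hat X_0=-\sum_{l\ne0}\hat X_l$, and $\norm{\hat X_0}\le\sum_{l\ne0}\norm{\hat X_l}\le\norm{\{X_j-\hat X_0\}}_\sigma$, because the weights $e^{\sigma\abs l}$ are at least $1$. Therefore
\begin{equation*}
	\norm{\{X_j\}}_\sigma\le2\,\norm{\{X_j-\hat X_0\}}_\sigma\le N\,\norm{\{Z_j\}}_\sigma .
\end{equation*}

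Lemma~\ref{lem:dft}(ii) bounds $Z_j$. The constant sequence $I$ has weighted norm $\norm I=1$, so $\norm{\{Z_j\}}_\sigma\le\norm{\{Y_j\}}_\sigma\bigl(1+\norm{\{X_j\}}_\sigma\bigr)$. Writing $x=\norm{\{X_j\}}_\sigma$ and $y=\norm{\{Y_j\}}_\sigma$, this yields $x\le Ny(1+x)$. The hypothesis $Ny\le\frac12$ gives $x(1-Ny)\le Ny$, so $x\le Ny/(1-Ny)\le2Ny$, which is~\eqref{eq:running}. No finiteness issue arises, because the sequences have finitely many coefficients.

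The main obstacle is the constant in the first step. The factor $2$ from removing $\hat X_0$ combined with the $N/2$ of Lemma~\ref{lem:dft}(iii) gives exactly $N$, which is what makes the absorption argument close. I would verify that Lemma~\ref{lem:dft}(iii) applies with $Y$ replaced by $Z$: it requires only the difference relation around the whole circle, including the wrap-around step $j=2N-1$, and this is the reason the closing condition $X_{2N}=X_0$ is built into the statement.
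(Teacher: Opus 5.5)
Your proposal is correct and follows the paper's proof step for step: you bound the mean mode using $X_0=0$ and the inversion formula, you obtain $x\le Ny(1+x)$ from Lemma~\ref{lem:dft}(iii) and~(ii), and you close the absorption with $Ny\le\frac12$. The only difference is cosmetic: you solve $x\le Ny/(1-Ny)$, whereas the paper writes $Ny(1+x)\le Ny+\frac12x$.
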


\begin{proof}
Lemma~\ref{lem:dft}(iii) bounds $\{X_j\}$ only up to its mean $\hat X_0$, so we first bound the mean. Since $X_0=0$, the
inversion formula in~\eqref{eq:dft} at $j=0$ gives $\hat X_0=-\sum_{l\ne0}\hat X_l$. As in the proof of
Lemma~\ref{lem:dft}(iii), the sequence $\{X_j-\hat X_0\}$ has the coefficients $\hat X_l$ for $l\ne0$ and $0$ for
$l=0$. Hence
\begin{equation}
	\norm{\hat X_0}\le\sum_{l\ne0}\norm{\hat X_l}\,e^{\sigma\abs l}=\norm{\{X_j-\hat X_0\}}_\sigma ,
\end{equation}
so $\norm{\{X_j\}}_\sigma=\norm{\hat X_0}+\norm{\{X_j-\hat X_0\}}_\sigma\le2\norm{\{X_j-\hat X_0\}}_\sigma$. We next bound $\{X_j-\hat X_0\}$ by applying Lemma~\ref{lem:dft}(iii) to the recursion~\eqref{eq:runrec} and then
Lemma~\ref{lem:dft}(ii):
\begin{equation}
	2\norm{\{X_j-\hat X_0\}}_\sigma\le N\norm{\{Y_j(I+X_j)\}}_\sigma\le N\norm{\{Y_j\}}_\sigma\,\norm{\{I+X_j\}}_\sigma .
\end{equation}
By~\eqref{eq:dft}, adding $I$ to every $X_j$ changes only the coefficient with $l=0$, from $\hat X_0$ to $I+\hat X_0$.
Hence $\norm{\{I+X_j\}}_\sigma\le1+\norm{\{X_j\}}_\sigma$, and the three bounds combine to
\begin{equation*}
	\norm{\{X_j\}}_\sigma\le2\norm{\{X_j-\hat X_0\}}_\sigma\le N\norm{\{Y_j\}}_\sigma\bigl(1+\norm{\{X_j\}}_\sigma\bigr)
	\le N\norm{\{Y_j\}}_\sigma+\tfrac12\norm{\{X_j\}}_\sigma ,
\end{equation*}
using $N\norm{\{Y_j\}}_\sigma\le\frac12$ in the last step, which gives~\eqref{eq:running}.
\end{proof}

The next proposition shows that the Chebyshev interpolant~\eqref{eq:interp} of the gauges~\eqref{eq:gaugej} is within
$C/N^2$ of $I$ on a fixed ellipse,
uniformly in $N$.

\begin{proposition}[The accumulated discrepancy on an ellipse]\label{prop:ellipse}
Let $P$ be analytic on $\mathcal E_\rho$, as in Theorem~\ref{thm:main}, and let $1<\rho''<\rho$. There are $C$ and $N_2$
such that for every $N\ge N_2$ the discrete parallel transport on the Chebyshev nodes~\eqref{eq:chebnodes} is defined,
and the Chebyshev interpolant of the gauges, the matrix polynomial $\mathcal I_NU$ with $(\mathcal I_NU)(k_j)=U_j$ for
$0\le j\le N$, satisfies
\begin{equation}\label{eq:IUbound}
	\norm{(\mathcal I_NU)(k)-I}\le\frac C{N^2}, \qquad k\in\mathcal E_{\rho''} .
\end{equation}
\end{proposition}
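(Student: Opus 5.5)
The plan is to combine the strip bound of Proposition~\ref{prop:strip} with the running-product estimate of Lemma~\ref{lem:running}, measuring everything in the weighted discrete Fourier norm~\eqref{eq:wnorm}. First I fix $\sigma$ with $\rho''=e^\sigma$ and choose $\beta'$ with $\sigma<\beta'<\log\rho$, for instance $\beta'=\frac12(\sigma+\log\rho)$. Proposition~\ref{prop:strip} then gives $N_1$ and $L$ such that, for $N\ge N_1$, $D(\cdot,h)$ is $2\pi$-periodic, analytic and bounded by $Lh^3$ on $\mathcal S_{\beta'}$.

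Definedness of the transport is handled next. For real $\theta$, the map $P\circ k$ is smooth, and $\norm{P(k_{j+1})-P(k_j)}\le h\,\norm{(P\circ k)'}_{\R}$, which is less than $1$ once $N$ is large. So Lemma~\ref{lem:dt} applies on the Chebyshev nodes, and Lemma~\ref{lem:retrace} extends the gauges to an even sequence $U_j=I+R_j$, indexed modulo $2N$. This sequence satisfies the recursion~\eqref{eq:discrep}, which closes up around the circle since $U_{2N}=U_0=I$.

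The core estimate comes in two steps. By Lemma~\ref{lem:dft}(i), applied to $Y_j:=D(\theta_j,h)$ with bound $B=Lh^3$ on $\mathcal S_{\beta'}$, we get $\norm{\{Y_j\}}_\sigma\le KLh^3=KL\pi^3N^{-3}$, where $K$ depends only on $\beta'-\sigma$. In particular $N\norm{\{Y_j\}}_\sigma\le KL\pi^3N^{-2}\le\frac12$ for $N\ge N_2$ with $N_2\ge N_1$ large enough. Lemma~\ref{lem:running}, with $X_j=R_j$ and $X_0=R_0=0$, then yields
\[
\norm{\{R_j\}}_\sigma\le2N\norm{\{Y_j\}}_\sigma\le2KL\pi^3N^{-2}.
\]
This step is the crux. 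The recursion must be treated around the whole circle, not just on $[0,\pi]$, so that the discrete Fourier machinery applies; the evenness from Lemma~\ref{lem:retrace} is exactly what makes the closed-up recursion valid. Everything here is already prepared by the earlier lemmas.

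Finally I transfer the bound to the ellipse. Since $\{R_j\}$ is even, Lemma~\ref{lem:dft}(iv) shows that the even trigonometric polynomial~\eqref{eq:evencheb} built from $\{R_j\}$ has norm at most $\norm{\{R_j\}}_\sigma$ for $\abs{\operatorname{Im}\theta}\le\sigma$. By the discussion after~\eqref{eq:evencheb}, this polynomial, written in $k=-\cos\theta$, is the Chebyshev interpolant of the values $R_j$, $0\le j\le N$. Since $\mathcal I_N$ is linear and reproduces constants, it equals $\mathcal I_NU-I$. As $\theta$ ranges over $\mathcal S_\sigma$, the point $k=-\cos\theta$ covers $\mathcal E_{e^\sigma}=\mathcal E_{\rho''}$ by~\eqref{eq:striptoellipse}; the minus sign is harmless because the ellipse is symmetric. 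Hence $\norm{(\mathcal I_NU)(k)-I}\le2KL\pi^3N^{-2}$ on $\mathcal E_{\rho''}$, which is~\eqref{eq:IUbound} with $C=2KL\pi^3$.
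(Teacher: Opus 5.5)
Your proposal is correct and follows the paper's proof essentially step for step. Proposition~\ref{prop:strip} and Lemma~\ref{lem:dft}(i) give $\norm{\{D(\theta_j,h)\}}_\sigma\le KLh^3$. Lemma~\ref{lem:running}, applied to the even, closed-up recursion~\eqref{eq:discrep}, gives $\norm{\{R_j\}}_\sigma\le 2\pi^3KL/N^2$, and Lemma~\ref{lem:dft}(iv) transfers this bound to $\mathcal E_{\rho''}$. The only difference is cosmetic: you check $\norm{P(k_{j+1})-P(k_j)}<1$ with the mean value inequality for $P\circ k$ in $\theta$, while the paper uses the node spacing in $k$; the two are equivalent since $\norm{(P\circ k)'}_\R\le\norm{P'}_{[-1,1]}$.
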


\begin{proof}
The proof combines four estimates: Proposition~\ref{prop:strip} and Lemma~\ref{lem:dft}(i) bound the one-step
discrepancies in the weighted norm, Lemma~\ref{lem:running} passes to the accumulated ones, and Lemma~\ref{lem:dft}(iv)
bounds their interpolant on the ellipse.

Let $\sigma=\log\rho''$, and fix $\beta'$ with $\sigma<\beta'<\log\rho$. Proposition~\ref{prop:strip} for this $\beta'$
gives $N_1$ and $L$, and Lemma~\ref{lem:dft}(i) gives $K$. We take $N_2\ge N_1$ such that $h=\pi/N$ satisfies
$h\norm{P'}_{[-1,1]}<1$ and $\pi KLh^2\le\frac12$ for $N\ge N_2$. Since the largest spacing of the Chebyshev nodes is at
most $h$ (see after~\eqref{eq:chebnodes}), the mean value inequality gives
$\norm{P(k_{j+1})-P(k_j)}\le h\norm{P'}_{[-1,1]}<1$ for $0\le j<N$. Hence the transport is defined, and Lemma~\ref{lem:retrace}
and~\eqref{eq:discrep} apply.

By Proposition~\ref{prop:strip} and Lemma~\ref{lem:dft}(i),
\begin{equation}
	\norm{\{D(\theta_j,h)\}}_\sigma\le KLh^3 .
\end{equation}
Since $NKLh^3=\pi KLh^2\le\frac12$, Lemma~\ref{lem:running} applies to the recursion~\eqref{eq:discrep},
$R_{j+1}-R_j=D(\theta_j,h)(I+R_j)$ with $R_0=0$, and gives
\begin{equation}\label{eq:Rbound}
	\norm{\{R_j\}}_\sigma\le2NKLh^3=2\pi KLh^2=\frac{2\pi^3KL}{N^2} .
\end{equation}

Since $U_j=I+R_j$, we have $\mathcal I_NU-I=\mathcal I_NR$. Since $\{R_j\}$ is even by~\eqref{eq:evenrec}, $(\mathcal I_NR)(k(\theta))$ is
given by~\eqref{eq:evencheb} with $X_j=R_j$, so Lemma~\ref{lem:dft}(iv) and~\eqref{eq:Rbound} give
\begin{equation}
	\norm{(\mathcal I_NU)(k(\theta))-I}=\norm{(\mathcal I_NR)(k(\theta))}\le\frac{2\pi^3KL}{N^2}, \qquad
	\abs{\operatorname{Im}\theta}\le\sigma .
\end{equation}
Since $\theta\mapsto k(\theta)$ takes $\mathcal S_\sigma$ onto $\mathcal E_{\rho''}$ by~\eqref{eq:striptoellipse}, this
is~\eqref{eq:IUbound} with $C=2\pi^3KL$.
\end{proof}

\begin{proof}[Proof of Theorem~\ref{thm:main}]
We fix $\rho''$ with $\rho'<\rho''<\rho$. Proposition~\ref{prop:ellipse} gives $C_1$ and $N_2$ such that, for
$N\ge N_2$, the transport is defined and $\norm{(\mathcal I_NU)(k)-I}\le C_1/N^2$ for $k\in\mathcal E_{\rho''}$. We take
$N_0\ge N_2$ with $C_1/N_0^2\le\frac14$, and let $N\ge N_0$.

For~(a), Lemma~\ref{lem:polar} on $\mathcal D=\mathcal E_{\rho''}$, with $X=\mathcal I_NU$ and $\tau=C_1/N^2\le\frac14$,
extends $U=\polar(\mathcal I_NU)$ analytically to $\mathcal E_{\rho''}$, with
\begin{equation}\label{eq:Ubound}
	\norm{U(k)-I}\le\frac{4C_1}{N^2}, \qquad k\in\mathcal E_{\rho''} .
\end{equation}
At the nodes, $(\mathcal I_NU)(k_j)=U_j$ is unitary, so $U(k_j)=\polar(U_j)=U_j$. This proves~(a) with $C=4C_1$.

For~(b), $V=WU$ is analytic on $\mathcal E_{\rho''}$ by Lemma~\ref{lem:continue}, and $V(k_j)=W_jU_j=V_j$
by~\eqref{eq:gaugej}. On $[-1,1]$, $W$ is a frame and $U$ is unitary, so $V^*V=I$ and $VV^*=WW^*=P$.

For~(c), let $B$ bound $\norm W$ on $\mathcal E_{\rho''}$, whose closure lies in $\mathcal E_\rho$. By~\eqref{eq:Ubound},
$\norm V\le B(1+4C_1/N^2)\le2B$ on $\mathcal E_{\rho''}$, so Lemma~\ref{lem:cheb}, with its constant $C_2$ for $\rho'$
and $\rho''$, gives for $E=\mathcal I_NV-V$
\begin{equation}\label{eq:Ebound}
	\norm{E}_{[-1,1]}\le C_3\rho'^{-N}, \qquad C_3=2C_2B .
\end{equation}
By~(b), on $[-1,1]$,
\begin{equation}
\begin{gathered}
	(\mathcal I_NV)(\mathcal I_NV)^*-P=EV^*+VE^*+EE^*,\\
	(\mathcal I_NV)^*(\mathcal I_NV)-I=E^*V+V^*E+E^*E,
\end{gathered}
\end{equation}
and since $\norm V=1$ there, both have norm at most $(2+\norm E)\norm E\le(2+C_3)C_3\rho'^{-N}$. This proves~(c), and
the theorem holds with $C=\max\bigl(4C_1,(2+C_3)C_3\bigr)$.
\end{proof}

\subsection{Proofs of the corollaries}\label{sec:corproofs}

This section proves Corollaries~\ref{cor:modified} and~\ref{cor:recover}. Both apply Theorem~\ref{thm:main} on a
slightly larger ellipse.

\begin{proof}[Proof of Corollary~\ref{cor:modified}]
We fix $\rho''$ with $\rho'<\rho''<\rho$. Theorem~\ref{thm:main}(a) on $\mathcal E_{\rho''}$ gives $C_1$ and $N_0$ such
that, for $N\ge N_0$, $U$ is analytic on $\mathcal E_{\rho''}$ and $\norm{U(k)-I}\le C_1/N^2$ there. We take $N_0$ with
$C_1/N_0^2\le1$.

Differentiating $V=WU$ and using $W'=P'W$ and~\eqref{eq:katoext} gives
\begin{equation}
	\Omega=V^\sharp V'=U^\sharp W^\sharp(P'WU+WU')=U^\sharp U' \qquad\text{on }\mathcal E_{\rho''} ,
\end{equation}
so $\Omega$ is analytic there and equals $V^*V'$ on $[-1,1]$. On $[-1,1]$, where $U$ is unitary,
$V'=P'WU+WU'=P'V+VU^*U'=P'V+V\Omega$, and the identity theorem extends this to $\mathcal E_{\rho''}$. Also,
$V(-1)=W(-1)U(-1)=V_0$, since $U(k_0)=U_0=I$, and $V_j=V(k_j)$ by Theorem~\ref{thm:main}(b).

For~\eqref{eq:Omegabound}, let $r>0$, depending only on $\rho'$ and $\rho''$, be such that the circle of radius $r$ about
any point of $\mathcal E_{\rho'}$ lies in $\mathcal E_{\rho''}$. The Cauchy integral formula for $(U-I)'$ on these circles
and~\eqref{eq:intnorm} give $\norm{U'}\le C_1/(rN^2)$ on $\mathcal E_{\rho'}$. Since $\mathcal E_{\rho''}$ is symmetric
about the real axis, $\norm{U^\sharp-I}\le1$ there as well, so $\norm{U^\sharp}\le2$ and
\begin{equation}
	\norm{\Omega(k)}\le\norm{U^\sharp(k)}\,\norm{U'(k)}\le\frac{2C_1}{rN^2}, \qquad k\in\mathcal E_{\rho'} ,
\end{equation}
which is~\eqref{eq:Omegabound} with $C=2C_1/r$.
\end{proof}

\begin{proof}[Proof of Corollary~\ref{cor:recover}]
Let $\rho''$, $B$ and $C_3$ be as in the proof of Theorem~\ref{thm:main}(c), and let $E=\mathcal I_NV-V$, the
interpolation error from which all three bounds in~\eqref{eq:katorec} come. Lemma~\ref{lem:cheb} gives
\begin{equation}\label{eq:EEbound}
	\norm{E}_{[-1,1]}\le C_3\rho'^{-N}, \qquad \norm{E'}_{[-1,1]}\le C_3\rho'^{-N}.
\end{equation}
We take $N_0$ large enough that~\eqref{eq:Omegabound} gives $\norm\Omega\le1$ on $[-1,1]$; then, since $\norm V=1$
there, \eqref{eq:modkato} gives $\norm{V'}_{[-1,1]}\le\norm{P'}_{[-1,1]}+1=:C_4$.

The unitarity of $U_{\mathcal I}$ and the bound on $U_{\mathcal I}-U$ both follow from one identity: if $A'=AX$ and
$B'=BY$ on $[-1,1]$ with $Y$ skew-Hermitian, then the product rule gives
\begin{equation}\label{eq:prodrule}
	(AB^*)'=A'B^*+A(B')^*=A(X-Y)B^* .
\end{equation}
With $A=B=U_{\mathcal I}$ and $X=Y=\Omega_{\mathcal I}$, which is skew-Hermitian by~\eqref{eq:OmegaI}, the
equation~\eqref{eq:UIeq} gives $(U_{\mathcal I}U_{\mathcal I}^*)'=0$, so
$U_{\mathcal I}U_{\mathcal I}^*=U_{\mathcal I}(-1)U_{\mathcal I}(-1)^*=I$ on $[-1,1]$.

We now prove the three bounds in~\eqref{eq:katorec}. For $\Omega_{\mathcal I}-\Omega$, note that $\Omega=V^*V'$ is skew-Hermitian on $[-1,1]$, and
$\Omega_{\mathcal I}$ is the skew-Hermitian part of $(\mathcal I_NV)^*(\mathcal I_NV)'=(V+E)^*(V'+E')$, so
$\Omega_{\mathcal I}-\Omega$ is the skew-Hermitian part of $E^*V'+V^*E'+E^*E'$, and by~\eqref{eq:EEbound}
\begin{equation}\label{eq:OmegaIbound}
	\norm{\Omega_{\mathcal I}-\Omega}_{[-1,1]}\le C_4\norm E_{[-1,1]}+\norm{E'}_{[-1,1]}+\norm E_{[-1,1]}\norm{E'}_{[-1,1]}
	\le C_5\rho'^{-N},
\end{equation}
with $C_5=(C_4+1+C_3)C_3$. For $U_{\mathcal I}-U$, we have $U'=U\Omega$ on $[-1,1]$, since $\Omega=U^*U'$ there by~\eqref{eq:conn},
so~\eqref{eq:prodrule} with $A=U_{\mathcal I}$ and $B=U$ gives $(U_{\mathcal I}U^*)'=U_{\mathcal I}(\Omega_{\mathcal I}-\Omega)U^*$.
Integrating from $-1$, where $U_{\mathcal I}U^*=I$, and using that the interval has length at most $2$ and that
$U_{\mathcal I}$ and $U$ are unitary, so that $\norm{U_{\mathcal I}-U}=\norm{U_{\mathcal I}U^*-I}$, we get
\begin{equation}\label{eq:UIminusU}
	\norm{U_{\mathcal I}-U}_{[-1,1]}\le2\norm{\Omega_{\mathcal I}-\Omega}_{[-1,1]}\le2C_5\rho'^{-N}.
\end{equation}
For the recovered frame, $W=VU^*$ on $[-1,1]$, so
$(\mathcal I_NV)\,U_{\mathcal I}^*-W=(V+E)\,U_{\mathcal I}^*-VU^*=EU_{\mathcal I}^*+V(U_{\mathcal I}-U)^*$, whose norm is
at most $\norm E+\norm{U_{\mathcal I}-U}\le3C_5\rho'^{-N}$ by~\eqref{eq:EEbound} and~\eqref{eq:UIminusU}, since
$C_3\le C_5$. This proves~\eqref{eq:katorec} with $C=3C_5$.
\end{proof}

\section{Numerical examples}\label{sec:numerics}

This section illustrates Proposition~\ref{prop:second}, Theorem~\ref{thm:main} and Corollary~\ref{cor:recover}.
Examples~1 and~2 are Hermitian families whose eigenvalues nearly cross or cross, Example~3 is a non-normal family with
constant eigenvalues, Example~4 is a non-normal family whose two eigenvalues in the subspace have a branch point, and
Example~5 is a large Hermitian family with a spectral gap. Figure~\ref{fig:eigs} shows the eigenvalues of
Examples~1, 2, 4 and~5.

In each example, the frames $Q_j$ at the Chebyshev nodes~\eqref{eq:chebnodes} are eigenvectors computed by the LAPACK
routine \texttt{zheev}, in the gauge that it returns; for Example~3, they are eigenvectors computed by
\texttt{zgeev} and normalized, and for Example~4, the first two Schur vectors computed by \texttt{zgees}. The discrete parallel transport is computed by Algorithm~\ref{alg:dt} from
$V_0=W(-1)$. The interpolant $\mathcal I_NV$ and its derivative are evaluated from their
Chebyshev coefficients. For Corollary~\ref{cor:recover}, $\Omega_{\mathcal I}$ is evaluated at the nodes, and the
Picard iteration~\eqref{eq:picard} is carried out on the nodes with Chebyshev spectral integration, until the update is at the level of machine epsilon; this takes at most $15$ iterations, and at most $7$ for $N\ge64$. We use the iteration also for $n=1$, despite
the explicit solution $U_{\mathcal I}=\exp\int_{-1}^k\Omega_{\mathcal I}$. Since $U^{(0)}=I$, the error after $\nu=0$ iterations is
$\norm{\mathcal I_NV-W}$; Figure~\ref{fig:ex4}(c) shows it for Example~4.

Errors are measured in the spectral norm~\eqref{eq:norms}, and the maximum over $[-1,1]$ is taken over $1000$ equispaced points ($100$ in Example~5), none of which is a node.
Example~1 is computed in quadruple precision, so that Figure~\ref{fig:ex1} shows the decay below the rounding errors of
double precision; the other examples are computed in double precision.

\begin{figure}[t]
\centering
\includegraphics[width=\textwidth]{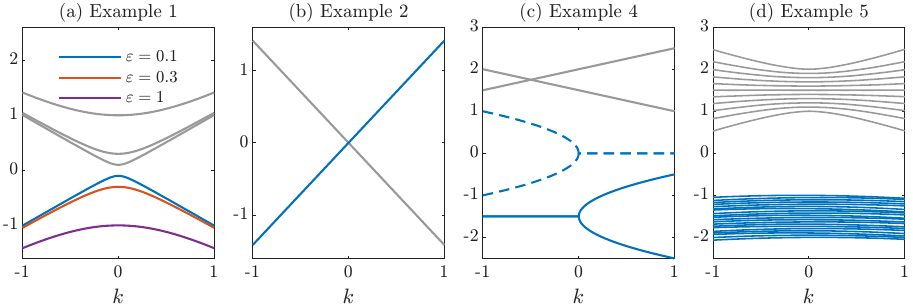}
\caption{The eigenvalues of the families of Examples~1, 2, 4 and~5, in color those whose invariant subspace is
$\Ran P(k)$, in gray the others. (a) Example~1, the avoided crossing~\eqref{eq:ex1}, for $\varepsilon=0.1$, $0.3$
and $1$. (b) Example~2, the exact crossing~\eqref{eq:ex2}; $P(k)$ follows the branch $sk$ through the crossing.
(c) Example~4, the family~\eqref{eq:ex4}, real parts solid and imaginary parts dashed; $P(k)$ projects onto the
invariant subspace of $-\frac32\pm\sqrt k$, a complex pair for $k<0$.
(d) Example~5, the family~\eqref{eq:ex5}; only the $20$ lowest eigenvalues and $11$ of the other $980$ are shown.
Example~3 is not shown: its eigenvalues are $1$ and $-1$ for every $k$.}
\label{fig:eigs}
\end{figure}

\subsection{Example 1: An avoided crossing}\label{sec:ex1}

The first example is an avoided crossing of two eigenvalues. For a parameter $\varepsilon>0$, the Hermitian family $H$
in~\eqref{eq:ex1} has the eigenvalues $\pm\lambda(k)$, $\lambda(k)=\sqrt{k^2+\varepsilon^2}$. They are closest at $k=0$, where they are $2\varepsilon$ apart. We take $P(k)$ to be the projector
onto the eigenvector of the lower eigenvalue $-\lambda(k)$, so $n=1$ and
\begin{equation}\label{eq:ex1}
	H(k)=\begin{pmatrix} k & \varepsilon e^{-\I k}\\ \varepsilon e^{\I k} & -k\end{pmatrix}, \qquad
	P(k)=\frac12\Bigl(I-\frac{H(k)}{\lambda(k)}\Bigr).
\end{equation}
The projector is analytic except at the branch points $\pm\I\varepsilon$ of $\lambda$, where
the eigenvalues meet. The largest Bernstein ellipse~\eqref{eq:ellipse} that excludes them has semi-minor axis
$\varepsilon$, so $\rho=\varepsilon+\sqrt{1+\varepsilon^2}$, and a narrower avoided crossing gives a slower rate. The
parallel-transported frame started from $V_0=W(-1)$ is
\begin{equation}\label{eq:ex1W}
\begin{gathered}
	W(k)=e^{\I\gamma(k)}\begin{pmatrix}-e^{-\I k}\sin\frac{\varphi(k)}2\\[2pt]\cos\frac{\varphi(k)}2\end{pmatrix},\\
	\cos\varphi(k)=\frac k{\lambda(k)}, \qquad \gamma(k)=\frac{k-\lambda(k)+1+\lambda(-1)}2,
\end{gathered}
\end{equation}
with $\varphi(k)\in(0,\pi)$. We take $\varepsilon=0.1$, $0.3$ and $1$, for which
$\rho=1.105$, $1.344$ and $2.414$. The smallest singular value $\sigma_j$ of the overlaps (Section~\ref{sec:compute}) is at
least $0.78$ in all runs.

\begin{figure}[t]
\centering
\includegraphics[width=\textwidth]{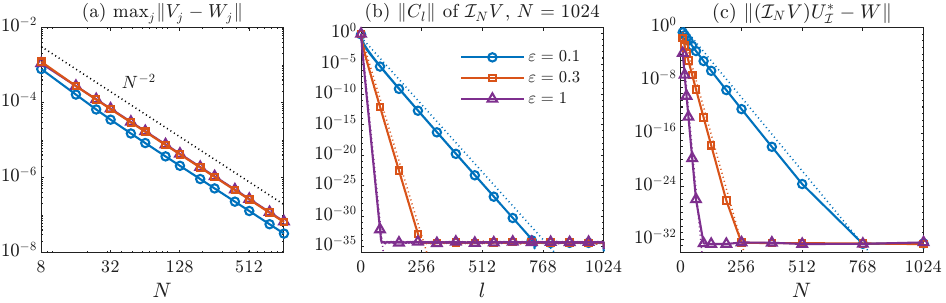}
\caption{Example 1, the avoided crossing~\eqref{eq:ex1}, for $\varepsilon=0.1$, $0.3$ and $1$, in quadruple precision. (a) The error of the computed
frames at the nodes, with $N^{-2}$ (dotted). (b) The Chebyshev coefficients $C_l$ of the interpolant
$\mathcal I_NV$ of the computed frames for $N=1024$, with $\rho^{-l}$ (dotted). (c) The error of the
parallel-transported frame recovered by Corollary~\ref{cor:recover}, with $\rho^{-N}$ (dotted).}
\label{fig:ex1}
\end{figure}

Figure~\ref{fig:ex1}(a) shows that the error $V_j-W_j$ of the computed frames decreases as $N^{-2}$, as
Proposition~\ref{prop:second} states. Figure~\ref{fig:ex1}(b) shows the Chebyshev coefficients of the interpolant
$\mathcal I_NV$ of the computed frames for $N=1024$. They decay at the rate $\rho^{-l}$ down to rounding errors, about
$10^{-35}$, with no plateau at the level of the errors $V_j-W_j$ in~(a). The data $V_j$ thus differ from the samples $W_j$ of the
parallel-transported frame by $O(N^{-2})$, yet their interpolant converges like that of a function analytic on
$\mathcal E_\rho$, since the computed frames are themselves samples of the analytic frame $V$ of
Theorem~\ref{thm:main}(b).

Figure~\ref{fig:ex1}(c) shows that the frame recovered by Corollary~\ref{cor:recover} converges to $W$ at the rate
$\rho^{-N}$ down to rounding errors, about $10^{-32}$. Theorem~\ref{thm:main} and Corollary~\ref{cor:recover} give the rate
$\rho'^{-N}$ for every $\rho'<\rho$, consistent with the observed limiting rate $\rho^{-N}$. By contrast, $\norm{\mathcal I_NV-W}$ stays at the size of
$\max_j\norm{V_j-W_j}$ in Figure~\ref{fig:ex1}(a), due to the term $W(U-I)$ in~\eqref{eq:IVminusW}.

\begin{table}[t]
\caption{Example 1, the avoided crossing~\eqref{eq:ex1}: $N^2\max\norm{U-I}$ on $[-1,1]$ and on the ellipses
$\mathcal E_{1.1}$ and $\mathcal E_{1.2}$, in quadruple precision.}
\label{tab:ex1}
\centering
\begin{tabular}{rcccccc}
\toprule
 & \multicolumn{3}{c}{$\varepsilon=0.3$, $\rho=1.344$} & \multicolumn{3}{c}{$\varepsilon=1$, $\rho=2.414$}\\
\cmidrule(lr){2-4}\cmidrule(lr){5-7}
$N$ & $[-1,1]$ & $\mathcal E_{1.1}$ & $\mathcal E_{1.2}$ & $[-1,1]$ & $\mathcal E_{1.1}$ & $\mathcal E_{1.2}$\\
\midrule
16  & 0.0706 & 0.0776 & 0.1014 & 0.0708 & 0.0727 & 0.0781\\
64  & 0.0684 & 0.0753 & 0.1019 & 0.0706 & 0.0725 & 0.0778\\
128 & 0.0683 & 0.0751 & 0.1015 & 0.0706 & 0.0725 & 0.0778\\
256 & 0.0683 & 0.0751 & 0.1014 & 0.0706 & 0.0725 & 0.0778\\
\bottomrule
\end{tabular}
\end{table}

Table~\ref{tab:ex1} checks the uniformity in $N$ on which Theorem~\ref{thm:main} rests. It lists $N^2\max\norm{U-I}$
on $[-1,1]$ and on the ellipses $\mathcal E_{1.1}$ and $\mathcal E_{1.2}$, where $U$ is continued off the nodes as
$U=(\mathcal I_NU)\bigl((\mathcal I_NU)^\sharp(\mathcal I_NU)\bigr)^{-1/2}$ (Lemma~\ref{lem:polar}
and Theorem~\ref{thm:main}(a)). By the maximum principle, the maximum over an ellipse is attained on its boundary, where we
take $1000$ equispaced points $z$ in~\eqref{eq:ellipse}. The values settle as $N$ grows, on the ellipses as on
$[-1,1]$, consistent with the bound~\eqref{eq:main2} with a constant independent of $N$.

\subsection{Example 2: An exact crossing}\label{sec:ex2}

The second example is the projector onto an eigenvector whose eigenvalue crosses the other one. For a parameter
$a\ne0$, the Hermitian family
\begin{equation}\label{eq:ex2}
	H(k)=kM(k), \qquad M(k)=\begin{pmatrix} a & e^{-\I k}\\ e^{\I k} & -a\end{pmatrix},
\end{equation}
has the eigenvalues $\pm sk$, $s=\sqrt{1+a^2}$, which cross at $k=0$ (Figure~\ref{fig:eigs}(b)). The eigenvectors of
$H(k)$ are those of $M(k)$, whose eigenvalues $\pm s$ never meet. Thus the branch $\lambda(k)=sk$, the lower eigenvalue
for $k<0$ and the upper one for $k>0$, has an eigenvector that is analytic through the crossing, as Rellich's theorem
(Section~\ref{sec:pt}) states. We take $P(k)=\frac12\bigl(I+M(k)/s\bigr)$, the projector onto it. It is entire,
so Theorem~\ref{thm:main} holds with every $\rho>1$. With $\cos\varphi=a/s$, the parallel-transported frame started
from $V_0=W(-1)$ is
\begin{equation}\label{eq:ex2W}
	W(k)=e^{-\I(k+1)\sin^2\frac\varphi2}\begin{pmatrix}\cos\frac\varphi2\\[2pt] e^{\I k}\sin\frac\varphi2\end{pmatrix}.
\end{equation}
We take $a=1$. For $a=0$ the eigenvector runs along a great circle of the Bloch sphere, a geodesic, and the computed
frames are exact up to rounding, as noted after Proposition~\ref{prop:second}; $a=1$ is the generic case.

The eigensolver sorts the eigenvectors by their eigenvalues, so it follows the branch $\lambda=sk$ on one side of the
crossing only. At each node, we therefore take as $Q_{j+1}$ the eigenvector with the larger overlap $\abs{Q^*V_j}$
with the current frame, and we take $N$ odd, so that the crossing $k=0$, where $H(0)=0$, is not a node. Identifying
the eigenvectors of the selected branch near a crossing is a separate problem from transporting them. Here it is solved by the
overlaps: the rejected eigenvector has overlap at most $0.34$ with the current frame, and the chosen one at least
$0.94$.

\begin{figure}[t]
\centering
\includegraphics[width=\textwidth]{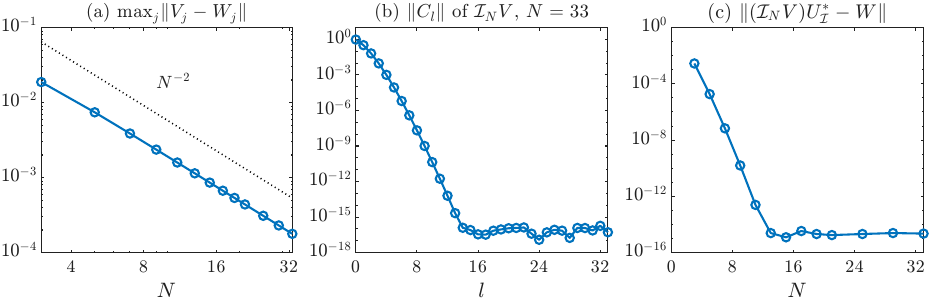}
\caption{Example 2, the exact crossing~\eqref{eq:ex2}, $a=1$. (a) The error of the computed frames at the nodes, with $N^{-2}$ (dotted). (b) The Chebyshev coefficients $C_l$ of
the interpolant $\mathcal I_NV$ of the computed frames for $N=33$. (c) The error of the parallel-transported frame recovered by
Corollary~\ref{cor:recover}, for odd $N$.}
\label{fig:ex2}
\end{figure}

The errors $V_j-W_j$ of the computed frames decrease as $N^{-2}$ (Figure~\ref{fig:ex2}(a)). Since $P$ is entire,
the Chebyshev coefficients of $\mathcal I_NV$ decay faster than any geometric rate (Figure~\ref{fig:ex2}(b)), and the
frame recovered by Corollary~\ref{cor:recover} reaches rounding errors at $N=13$ (Figure~\ref{fig:ex2}(c)).

\subsection{Example 3: A non-normal family}\label{sec:ex3}

The third example is an invariant subspace of a non-normal family. For a parameter $\beta>0$,
the family
\begin{equation}\label{eq:ex3}
	A(k)=\begin{pmatrix}-1 & 2\beta ke^{\I k}\\ 0 & 1\end{pmatrix}
\end{equation}
has the eigenvalues $-1$ and $1$ for every $k$. The eigenvector of $1$ is $x(k)=(\beta ke^{\I k},1)^T$, which is not
orthogonal to the eigenvector $(1,0)^T$ of $-1$. We take $P=xx^*/(x^*x)$ with $x=x(k)$, the orthogonal projector onto it,
so $n=1$. It continues analytically as $xx^\sharp/(x^\sharp x)$, and $x^\sharp x=1+\beta^2k^2$ vanishes at
$\pm\I/\beta$, so $\rho=1/\beta+\sqrt{1+1/\beta^2}$. The rate is thus set by the non-normality alone, not by the
eigenvalues. The parallel-transported frame started from $V_0=W(-1)$ is
\begin{equation}\label{eq:ex3W}
	W(k)=\frac{e^{\I\gamma(k)}}{\sqrt{1+\beta^2k^2}}\,x(k), \qquad
	\gamma(k)=-(k+1)+\frac{\arctan(\beta k)+\arctan\beta}\beta.
\end{equation}
We take $\beta=1$, $2$ and $4$, for which $\rho=2.414$, $1.618$ and $1.281$.

\begin{figure}[t]
\centering
\includegraphics[width=\textwidth]{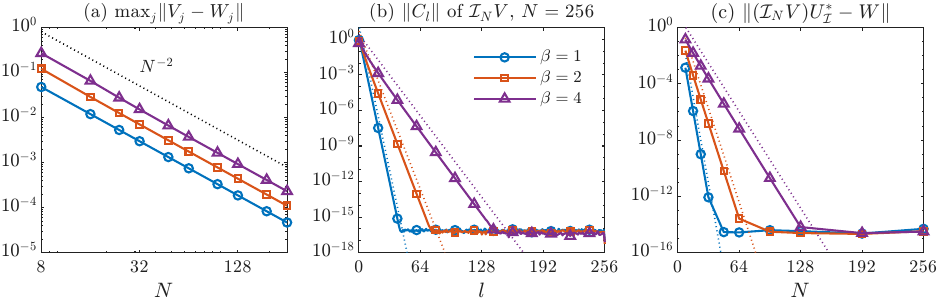}
\caption{Example 3, the non-normal family~\eqref{eq:ex3}, for $\beta=1$, $2$ and $4$. (a) The error of the computed frames at the nodes, with $N^{-2}$ (dotted). (b) The Chebyshev coefficients
$C_l$ of the interpolant $\mathcal I_NV$ of the computed frames for $N=256$, with $\rho^{-l}$ (dotted). (c) The error
of the parallel-transported frame recovered by Corollary~\ref{cor:recover}, with $\rho^{-N}$ (dotted).}
\label{fig:ex3}
\end{figure}

Figure~\ref{fig:ex3} shows the behavior of Example~3, with rates set by $\beta$. The errors $V_j-W_j$ decrease as
$N^{-2}$, with a constant that grows with $\beta$ (Figure~\ref{fig:ex3}(a)). The three families have the same
eigenvalues, but the Chebyshev coefficients of $\mathcal I_NV$ decay at the rate $\rho^{-l}$ (Figure~\ref{fig:ex3}(b)), and
the frame recovered by Corollary~\ref{cor:recover} converges to $W$ at the rate $\rho^{-N}$ (Figure~\ref{fig:ex3}(c)), with
$\rho$ determined by the non-normality.

\subsection{Example 4: Two eigenvalues with a branch point}\label{sec:ex4}

The fourth example is also non-normal, with rank $n=2$, and the two eigenvalues of the subspace meet at a branch
point. For a parameter $\varepsilon>0$, the $4\times4$ family
\begin{equation}\label{eq:ex4}
	A(k)=R(k)T(k)R(k)^*, \qquad R(k)=e^{-\I\varphi(k)S}, \qquad \varphi(k)=\arctan\frac k\varepsilon,
\end{equation}
with
\begin{equation}\label{eq:ex4T}
	T(k)=\begin{pmatrix}T_{00}(k)&0\\0&\operatorname{diag}\bigl(\frac32-\frac k2,\,2+\frac k2\bigr)\end{pmatrix}, \qquad
	T_{00}(k)=\begin{pmatrix}-\frac32&1\\k&-\frac32\end{pmatrix},
\end{equation}
has the eigenvalues $-\frac32\pm\sqrt k$ of $T_{00}(k)$, a complex pair for $k<0$, and $\frac32-\frac k2$ and
$2+\frac k2$, shown in Figure~\ref{fig:eigs}(c). Here $S$ is the fixed Hermitian matrix
\begin{equation}\label{eq:ex4S}
	S=\begin{pmatrix}
	0.3 & 0.5+0.2\I & -0.4+0.1\I & 0.2-0.3\I\\
	0.5-0.2\I & -0.6 & 0.3+0.4\I & -0.1+0.5\I\\
	-0.4-0.1\I & 0.3-0.4\I & 0.1 & 0.6+0.2\I\\
	0.2+0.3\I & -0.1-0.5\I & 0.6-0.2\I & 0.4
	\end{pmatrix}.
\end{equation}
The eigenvalues $-\frac32\pm\sqrt k$ have a branch point at $k=0$, where $T_{00}(0)$ is a Jordan block. Their
eigenvectors $R(k)(1,\pm\sqrt k,0,0)^T$ are not analytic there, and their Riesz projectors have a pole
(Theorem~II.1.9 in~\cite{Kato}), so, unlike the branch $sk$ in Example~2, neither eigenvalue can be treated alone.
Their invariant subspace
is analytic: since $T(k)$ is block diagonal, the orthogonal projector onto it is
$P(k)=R(k)P_0R(k)^*$, $P_0=\operatorname{diag}(1,1,0,0)$. The two eigenvalues stay at a distance of at least
$\frac32$ from the other two. The projector is analytic except at
the branch points $\pm\I\varepsilon$ of $\varphi$, so $\rho=\varepsilon+\sqrt{1+\varepsilon^2}$, as in Example~1. With
$\varphi_0=\varphi(-1)$ and $S_{00}$ the upper left $2\times2$ block of $S$, the parallel-transported frame started
from $V_0=W(-1)$ is
\begin{equation}\label{eq:ex4W}
	W(k)=R(k)\begin{pmatrix}e^{\I(\varphi(k)-\varphi_0)S_{00}}\\ 0\end{pmatrix}.
\end{equation}
We take $\varepsilon=0.5$, so $\rho=1.618$. At each node, $Q_{j+1}$
consists of the first two Schur vectors of an ordered Schur decomposition of $A(k_{j+1})$, with the eigenvalues of
negative real part first. These are orthonormal also at $k=0$, a node for even $N$, where the eigenvectors are
parallel. Unlike in Example~2, no eigenvectors have to be selected, since Algorithm~\ref{alg:dt} depends on the
subspace spanned by $Q_{j+1}$, not on its basis.

\begin{figure}[t]
\centering
\includegraphics[width=\textwidth]{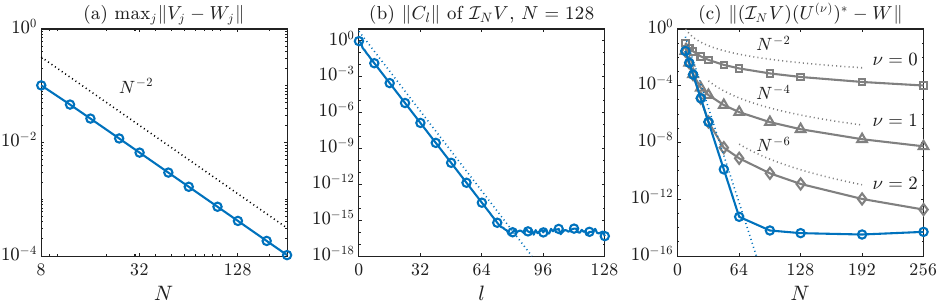}
\caption{Example 4, the family~\eqref{eq:ex4}, $\varepsilon=0.5$. (a) The error of the computed frames at the nodes, with $N^{-2}$ (dotted). (b) The Chebyshev coefficients $C_l$
of the interpolant $\mathcal I_NV$ of the computed frames for $N=128$, with $\rho^{-l}$ (dotted). (c) The error of
$(\mathcal I_NV)(U^{(\nu)})^*$ after $\nu=0,1,2$ Picard iterations~\eqref{eq:picard}, and of the
parallel-transported frame recovered by Corollary~\ref{cor:recover}, with $N^{-2(\nu+1)}$ and $\rho^{-N}$ (dotted).}
\label{fig:ex4}
\end{figure}

The gauges $U_j$ are now $2\times2$ unitary matrices. As in Example~1, the errors $V_j-W_j$ decrease as $N^{-2}$
(Figure~\ref{fig:ex4}(a)), the Chebyshev coefficients of the interpolant $\mathcal I_NV$ of the computed frames decay
at the rate $\rho^{-l}$ down to rounding errors (Figure~\ref{fig:ex4}(b)), and the frame recovered by
Corollary~\ref{cor:recover} converges to $W$ at the rate $\rho^{-N}$ (Figure~\ref{fig:ex4}(c)), through the branch point.
Figure~\ref{fig:ex4}(c) also shows the error of $(\mathcal I_NV)(U^{(\nu)})^*$ after $\nu$ Picard
iterations~\eqref{eq:picard}. The error for $\nu=0$, $\norm{\mathcal I_NV-W}$, decreases only as $N^{-2}$. The errors for $\nu=1$ and $\nu=2$ follow the recovered
frame while $\rho^{-N}$ dominates, and then decrease as $N^{-4}$ and $N^{-6}$, as~\eqref{eq:picardbound} states.

\subsection{Example 5: A large family with a gap}\label{sec:ex5}

The last example is larger, and its parallel-transported frame is not known in closed form. The Hermitian family
\begin{equation}\label{eq:ex5}
	H(k)=H_0+kH_1, \qquad m=1000,
\end{equation}
has $H_0=\operatorname{diag}(a_1,\dots,a_m)$, with $a_1,\dots,a_{20}$ equispaced in $[-2,-1]$ and
$a_{21},\dots,a_m$ equispaced in $[1,2]$, and a fixed random Hermitian matrix $H_1=\frac12(G+G^*)$, where $G$ has
independent standard complex Gaussian entries, scaled so that $\norm{H_1}=0.8$. We take $P(k)$ to be the projector onto
the eigenvectors of the $20$ lowest eigenvalues, so $n=20$. By Weyl's inequality, for $k\in[-1,1]$ these eigenvalues
stay below $-0.2$ and the others above $0.2$, so the gap never closes and $P$ is analytic in a neighborhood of
$[-1,1]$.

Since $W$ is not known, the errors are measured without it. The Chebyshev nodes for $N=240$ contain those for every $N$
that divides $240$, so all runs use the eigenvectors at these $241$ nodes, and the frame $W^{(240)}$ recovered by
Corollary~\ref{cor:recover} at $N=240$ serves as a converged numerical reference. For $N<240$ we
compare the computed frames $V_j$ with $W^{(240)}$ at the nodes, and the recovered frame $W^{(N)}$ with $W^{(240)}$ on
$[-1,1]$. The projector defect $\norm{(\mathcal I_NV)(\mathcal I_NV)^*-P}$ of Theorem~\ref{thm:main}(c) needs only $P$,
which the eigensolver gives at the evaluation points. For the Chebyshev coefficients, the rate $\rho$ is not known.
Instead, we compare them with those of the interpolant $\mathcal I_N(PX)$, for a fixed random $m\times n$ matrix $X$
with orthonormal columns. The function $PX$ is analytic wherever $P$ is, and its values $P(k_j)X=Q_j(Q_j^*X)$ at the
nodes come from the eigensolver alone, without the discrete parallel transport. For a generic $X$, its Chebyshev
coefficients therefore decay at the rate at which $P$ itself can be interpolated.

\begin{figure}[t]
\centering
\includegraphics[width=\textwidth]{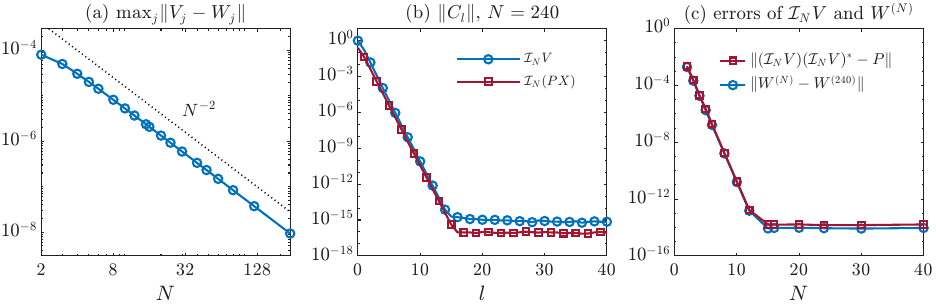}
\caption{Example 5, the family~\eqref{eq:ex5} with $m=1000$ and $n=20$. (a) The error of the computed frames at the
nodes, measured against the recovered frame $W^{(240)}$, with $N^{-2}$ (dotted). (b) The Chebyshev coefficients $C_l$ of
the interpolant $\mathcal I_NV$ of the computed frames for $N=240$, and those of $\mathcal I_N(PX)$ for a fixed random
$X$, which decay at the rate of $P$. (c) The projector defect of $\mathcal I_NV$, and the distance between the recovered
frames $W^{(N)}$ and $W^{(240)}$.}
\label{fig:ex5}
\end{figure}

The errors $V_j-W_j$ of the computed frames decrease as $N^{-2}$ (Figure~\ref{fig:ex5}(a)). The Chebyshev
coefficients of $\mathcal I_NV$ decay at the same rate as those of $\mathcal I_N(PX)$ (Figure~\ref{fig:ex5}(b)), so the
computed frames are interpolated as fast as $P$ itself. The projector defect and the recovered frame reach rounding
errors at $N=15$ (Figure~\ref{fig:ex5}(c)). The recovered frame thus needs only $16$ eigensolves of size $1000$, at the
nodes for $N=15$. At these nodes, the computed frames differ from the parallel-transported frame by $2\times10^{-6}$
(Figure~\ref{fig:ex5}(a)), yet they determine it to rounding errors. Each step of the discrete
parallel transport costs $O(mn^2)$ operations (Section~\ref{sec:compute}), which is negligible next to the eigensolves.

\section{Conclusion}\label{sec:conclusion}

We have shown that discrete parallel transport, despite being a second-order approximation to continuous parallel
transport, produces frames that can be interpolated with spectral accuracy when the underlying family of subspaces is
analytic. The key result is that the computed frames are exact samples of an analytic frame whose gauge relative to the
parallel-transported frame differs from the identity by $O(N^{-2})$ throughout a fixed complex neighborhood. This
uniform analytic structure explains the spectral decay of the interpolation error. Moreover, the gauge can be removed by
solving a small matrix differential equation, recovering the parallel-transported frame to spectral accuracy without
additional eigensolves. Thus, the standard discrete transport step already retains the information needed for spectral
accuracy. The rest of this section discusses generalizations of the results and potential applications.

For families of finite smoothness, numerical experiments indicate that if the $p$-th derivative
of $P$ is of bounded variation, $p\ge3$, the computed frames still differ from the parallel-transported frame by
$O(N^{-2})$, yet the recovered frame converges at the rate $N^{-p}$, the rate of interpolating $W$ itself (Theorem~7.2
in~\cite{ATAP}). For adaptivity, nested grids of Chebyshev points of the second kind reuse the eigensolves under
refinement, and piecewise grids concentrate nodes near avoided crossings, where the region of analyticity is small.

Beyond the interval, families that are periodic in the parameter, or periodic up to a fixed unitary similarity, as for
Wannier functions in one dimension~\cite{GZ25,ZhangTR1575}, are the subject of a sequel to this paper, where equispaced nodes
and trigonometric interpolation replace Chebyshev nodes and polynomial interpolation. In another direction, the results
extend to families of finite-rank orthogonal projectors on a Hilbert space that are analytic in the operator norm. The
estimates of Section~\ref{sec:estimates} hold for operator-valued functions with the same proofs, the frames become
operators from $\C^n$ into the Hilbert space, and the overlap matrices, polar factors and square roots remain
$n\times n$.

The results also extend to a product of intervals by transporting in one direction after
the other, as in the construction of Wannier functions in~\cite{Zhang25}. On the square $[-1,1]^2$, the frame is
transported along the bottom edge and then, from each of its Chebyshev nodes, along the vertical line through it;
further parameters are added in the same way. The resulting frame is analytic in both variables, by the proof
in~\cite{Zhang25} for a single band on the Brillouin zone, a torus. By~\ref{P}, the gauge of the computed frames
factors into the gauge along the bottom edge and those along the vertical lines, and the proofs of
Section~\ref{sec:proofs} hold for the latter with the first variable as a complex parameter, since
Lemma~\ref{lem:polar} allows several variables and the other estimates are uniform in it. A product of intervals is
simply connected, so there is no topological obstruction; on the torus, a nonzero Chern number obstructs a periodic
frame.

The results are most useful when the frame itself, not only the subspace, is needed and the
eigensolves are expensive. Diabatic bases in quantum chemistry are an example: each eigensolve is an
electronic-structure calculation, and the singular-value basis, a fixed reference, is popular in numerical work since
it needs no integration along a path~\cite{LRS22}. The parallel-transported diabatic basis, defined by transport along
radial lines from a reference geometry~\cite{LRS22}, could instead be computed from the electronic states at the
Chebyshev nodes of each line. For Wannier functions, the frame over the Brillouin zone is the output, and its Fourier
coefficients are the Wannier functions~\cite{MV97,Vanderbilt18}; this uses the periodic and several-parameter settings
above. In parametric model reduction, reduced bases at sample parameter values are aligned with a fixed reference
before the reduced operators are interpolated~\cite{AF11}; discrete parallel transport would align them also where
the subspace is far from the reference (Remark~\ref{rmk:fixedref}).

\bibliographystyle{plain}
\bibliography{refs}

\begin{thebibliography}{10}

\bibitem{AF11}
David Amsallem and Charbel Farhat.
\newblock An online method for interpolating linear parametric reduced-order
  models.
\newblock {\em SIAM Journal on Scientific Computing}, 33(5):2169--2198, 2011.

\bibitem{BZA24}
Thomas Bendokat, Ralf Zimmermann, and P.-A. Absil.
\newblock A {G}rassmann manifold handbook: basic geometry and computational
  aspects.
\newblock {\em Advances in Computational Mathematics}, 50(1):6, 2024.

\bibitem{BV96}
J.~Bennetto and David Vanderbilt.
\newblock Semiconductor effective charges from tight-binding theory.
\newblock {\em Physical Review B}, 53(23):15417--15420, 1996.

\bibitem{BDF08}
David Bindel, James~W. Demmel, and Mark Friedman.
\newblock Continuation of invariant subspaces in large bifurcation problems.
\newblock {\em SIAM Journal on Scientific Computing}, 30(2):637--656, 2008.

\bibitem{BG73}
{\AA}ke Bj{\"o}rck and Gene~H. Golub.
\newblock Numerical methods for computing angles between linear subspaces.
\newblock {\em Mathematics of Computation}, 27(123):579--594, 1973.

\bibitem{BBMN91}
Angelika Bunse-Gerstner, Ralph Byers, Volker Mehrmann, and Nancy~K. Nichols.
\newblock Numerical computation of an analytic singular value decomposition of
  a matrix valued function.
\newblock {\em Numerische Mathematik}, 60(1):1--39, 1991.

\bibitem{CLPS17}
{\'E}ric Canc{\`e}s, Antoine Levitt, Gianluca Panati, and Gabriel Stoltz.
\newblock Robust determination of maximally localized {W}annier functions.
\newblock {\em Physical Review B}, 95(7):075114, 2017.

\bibitem{CHN16}
Horia~D. Cornean, Ira Herbst, and Gheorghe Nenciu.
\newblock On the construction of composite {W}annier functions.
\newblock {\em Annales Henri Poincar\'e}, 17(12):3361--3398, 2016.

\bibitem{DDF00}
James~W. Demmel, Luca Dieci, and Mark~J. Friedman.
\newblock Computing connecting orbits via an improved algorithm for continuing
  invariant subspaces.
\newblock {\em SIAM Journal on Scientific Computing}, 22(1):81--94, 2000.

\bibitem{DF01}
Luca Dieci and Mark~J. Friedman.
\newblock Continuation of invariant subspaces.
\newblock {\em Numerical Linear Algebra with Applications}, 8(5):317--327,
  2001.

\bibitem{DPP13}
Luca Dieci, Alessandra Papini, and Alessandro Pugliese.
\newblock Approximating coalescing points for eigenvalues of {H}ermitian
  matrices of three parameters.
\newblock {\em SIAM Journal on Matrix Analysis and Applications},
  34(2):519--541, 2013.

\bibitem{EAS98}
Alan Edelman, Tom{\'a}s~A. Arias, and Steven~T. Smith.
\newblock The geometry of algorithms with orthogonality constraints.
\newblock {\em SIAM Journal on Matrix Analysis and Applications},
  20(2):303--353, 1998.

\bibitem{Frankel11}
Theodore Frankel.
\newblock {\em The Geometry of Physics: An Introduction}.
\newblock Cambridge University Press, Cambridge, third edition, 2011.

\bibitem{GLS19}
David Gontier, Antoine Levitt, and Sami Siraj-dine.
\newblock Numerical construction of {W}annier functions through homotopy.
\newblock {\em Journal of Mathematical Physics}, 60(3):031901, 2019.

\bibitem{GZ25}
Abinand Gopal and Hanwen Zhang.
\newblock A highly accurate procedure for computing globally optimal {W}annier
  functions in one-dimensional crystalline insulators.
\newblock {\em Advances in Computational Mathematics}, 51(6):52, 2025.

\bibitem{HLW06}
Ernst Hairer, Christian Lubich, and Gerhard Wanner.
\newblock {\em Geometric Numerical Integration: Structure-Preserving Algorithms
  for Ordinary Differential Equations}, volume~31 of {\em Springer Series in
  Computational Mathematics}.
\newblock Springer, Berlin, second edition, 2006.

\bibitem{Harland23}
Derek Harland.
\newblock Approximating the parallel transport of an induced connection.
\newblock {\em Proceedings of the Royal Society A}, 479:20220761, 2023.

\bibitem{Higham08}
Nicholas~J. Higham.
\newblock {\em Functions of Matrices: Theory and Computation}.
\newblock SIAM, Philadelphia, 2008.

\bibitem{Kato50}
Tosio Kato.
\newblock On the adiabatic theorem of quantum mechanics.
\newblock {\em Journal of the Physical Society of Japan}, 5(6):435--439, 1950.

\bibitem{Kato}
Tosio Kato.
\newblock {\em Perturbation Theory for Linear Operators}.
\newblock Classics in Mathematics. Springer, Berlin, 1995.

\bibitem{LRS22}
Robert Littlejohn, Jonathan Rawlinson, and Joseph Subotnik.
\newblock The parallel-transported (quasi)-diabatic basis.
\newblock {\em The Journal of Chemical Physics}, 157(18):184303, 2022.

\bibitem{MMYSV12}
Nicola Marzari, Arash~A. Mostofi, Jonathan~R. Yates, Ivo Souza, and David
  Vanderbilt.
\newblock Maximally localized {W}annier functions: theory and applications.
\newblock {\em Reviews of Modern Physics}, 84(4):1419--1475, 2012.

\bibitem{MV97}
Nicola Marzari and David Vanderbilt.
\newblock Maximally localized generalized {W}annier functions for composite
  energy bands.
\newblock {\em Physical Review B}, 56(20):12847--12865, 1997.

\bibitem{MR93}
Volker Mehrmann and Werner Rath.
\newblock Numerical methods for the computation of analytic singular value
  decompositions.
\newblock {\em Electronic Transactions on Numerical Analysis}, 1:72--88, 1993.

\bibitem{Rheinboldt88}
Werner~C. Rheinboldt.
\newblock On the computation of multi-dimensional solution manifolds of
  parametrized equations.
\newblock {\em Numerische Mathematik}, 53:165--181, 1988.

\bibitem{Roman06}
Eric Roman, Jonathan~R. Yates, Marek Veithen, David Vanderbilt, and Ivo Souza.
\newblock Ab initio study of the nonlinear optics of {III}-{V} semiconductors
  in the terahertz regime.
\newblock {\em Physical Review B}, 74(24):245204, 2006.

\bibitem{SteinShakarchi03}
Elias~M. Stein and Rami Shakarchi.
\newblock {\em Complex Analysis}.
\newblock Princeton Lectures in Analysis II. Princeton University Press,
  Princeton, 2003.

\bibitem{Teschl12}
Gerald Teschl.
\newblock {\em Ordinary Differential Equations and Dynamical Systems}, volume
  140 of {\em Graduate Studies in Mathematics}.
\newblock American Mathematical Society, Providence, RI, 2012.

\bibitem{ATAP}
Lloyd~N. Trefethen.
\newblock {\em Approximation Theory and Approximation Practice}.
\newblock SIAM, Philadelphia, extended edition, 2019.

\bibitem{Vanderbilt18}
David Vanderbilt.
\newblock {\em Berry Phases in Electronic Structure Theory: Electric
  Polarization, Orbital Magnetization and Topological Insulators}.
\newblock Cambridge University Press, Cambridge, 2018.

\bibitem{WST16}
Georg~W. Winkler, Alexey~A. Soluyanov, and Matthias Troyer.
\newblock Smooth gauge and {W}annier functions for topological band structures
  in arbitrary dimensions.
\newblock {\em Physical Review B}, 93(3):035453, 2016.

\bibitem{ZhangTR1575}
Hanwen Zhang.
\newblock Constructing globally optimal {W}annier functions for isolated
  composite bands in one dimension: analysis.
\newblock Research Report YALEU/DCS/TR-1575, Yale University, Department of
  Computer Science, 2025.

\bibitem{Zhang25}
Hanwen Zhang.
\newblock Constructing optimal {W}annier functions via potential theory:
  isolated single band for matrix models.
\newblock arXiv:2502.08641, 2025.

\end{thebibliography}

\end{document}